\documentclass[twoside, 11pt]{article}

\usepackage{geometry}
\usepackage{latexsym}
\usepackage{enumerate,verbatim}
\usepackage{amsfonts}
\usepackage{enumitem}
\usepackage[english]{babel}
\usepackage[ansinew]{inputenc}
\usepackage{amsmath,amsthm,amsfonts,amssymb}
\usepackage{graphics}
\graphicspath{{figs/}}
\usepackage{subcaption}
\usepackage{mathrsfs}
\usepackage{booktabs}
\usepackage[ruled,linesnumbered]{algorithm2e}

\usepackage{color}

\usepackage{pifont}

\usepackage{tikz}
\usetikzlibrary{fit}
\tikzset{%
	highlight/.style={rectangle,blend mode = multiply,draw=blue!90!black,thick,rounded corners = 0.3 mm,inner sep=0.5pt}
}

\usetikzlibrary{shadows}
\usetikzlibrary{arrows,positioning} 

\usepackage{color}
\usepackage{bm}
\usepackage{url}
\usepackage{hyperref}
\hypersetup{
  colorlinks=true,
  linkcolor=blue,
  filecolor=blue,
  anchorcolor=blue,
  urlcolor=black,
  citecolor=purple
}

\usepackage{appendix}

\usepackage{diagbox}

 \newtheorem{thm}{Theorem}[section]
 
 \newtheorem{lemma}[thm]{Lemma}
 \newtheorem{prop}[thm]{Proposition}
 \theoremstyle{definition}
 \newtheorem{defi}[thm]{Definition}
 \theoremstyle{remark}
 \newtheorem{rmk}[thm]{Remark}
 \theoremstyle{eg}

  \newtheorem{ass}[thm]{Assumption}
 \theoremstyle{fact}
 \newtheorem{fact}[thm]{Fact}
\numberwithin{equation}{section}

\newcommand{\bz}{\mathbf 0}

\newcommand{\R}{\mathbb R}

\DeclareMathOperator{\prox}{prox}
\DeclareMathOperator{\proj}{proj}
\DeclareMathOperator{\sign}{sign}

\DeclareMathOperator{\dist}{dist}

\DeclareMathOperator*{\argmin}{argmin}
\DeclareMathOperator*{\argmax}{argmax}
\DeclareMathOperator{\tr}{tr}

\newcommand{\grad}{\operatorname{grad}}

\newcommand{\St}{\operatorname{St}}

\newcommand{\bX}{\bm{X}}
\newcommand{\bI}{\bm{I}}
\newcommand{\bY}{\bm{Y}}
\newcommand{\bZ}{\bm{Z}}

\definecolor{cobalt}{rgb}{0.0, 0.28, 0.67}
\definecolor{slateblue}{HTML}{6a5acd}
\definecolor{darkgreen}{HTML}{005f5f}

\usepackage[square,numbers]{natbib}

\usepackage{bigstrut}

\title{\bf Primal-Dual Methods for Nonsmooth Nonconvex Optimization with Orthogonality Constraints}
\author{Linglingzhi Zhu\thanks{H. Milton Stewart School of Industrial and Systems Engineering, Georgia Institute of Technology, Atlanta, GA, USA (\href{mailto:llzzhu@gatech.edu}{llzzhu@gatech.edu})} \and Wentao Ding\thanks{Department of Systems Engineering and Engineering Management, The Chinese University of Hong Kong, Shatin, N.T., Hong Kong (\href{mailto:wentaoding@cuhk.edu.hk}{wentaoding@cuhk.edu.hk})}
\and Shangyuan Liu\thanks{Department of Systems Engineering and Engineering Management, The Chinese University of Hong Kong, Shatin, N.T., Hong Kong (\href{mailto:shangyuanliu@link.cuhk.edu.hk}{shangyuanliu@link.cuhk.edu.hk})} \and Anthony Man-Cho So\thanks{Department of Systems Engineering and Engineering Management, The Chinese University of Hong Kong, Shatin, N.T., Hong Kong (\href{mailto:manchoso@se.cuhk.edu.hk}{manchoso@se.cuhk.edu.hk})}}
\date{\today}

\begin{document}
\maketitle

\begin{abstract}
Recent advancements in data science have significantly elevated the importance of orthogonally constrained optimization problems. The Riemannian approach has become a popular technique for addressing these problems due to the advantageous computational and analytical properties of the Stiefel manifold. Nonetheless, the interplay of nonsmoothness alongside orthogonality constraints introduces substantial challenges to current Riemannian methods, including scalability, parallelizability, complicated subproblems, and cumulative numerical errors that threaten feasibility.
In this paper, we take a retraction-free primal-dual approach and propose a linearized smoothing augmented Lagrangian method specifically designed for nonsmooth and nonconvex optimization with orthogonality constraints. Our proposed method is single-loop and free of subproblem solving. We establish its iteration complexity of $\mathcal{O}(\epsilon^{-3})$ for finding $\epsilon$-KKT points, matching the best-known results in the Riemannian optimization literature. Additionally, by invoking the standard Kurdyka-\L ojasiewicz (K\L) property, we demonstrate asymptotic sequential convergence of the proposed algorithm. Numerical experiments on both smooth and nonsmooth orthogonal constrained problems demonstrate the superior computational efficiency and scalability of the proposed method compared with state-of-the-art algorithms.
\end{abstract}

\section{Introduction}

In this paper, we focus on the following general nonsmooth nonconvex problem with orthogonality constraints:
\begin{equation}\tag{P}\label{eq:problem_composite}
\begin{array}{cl}
\min\limits_{\bX\in\mathbb{R}^{m\times n}}  &f(\bX):=\ell(\bX)+g(\bX)\\
\text{s.t.} &\bX^\top \bX=\bI_n,
\end{array}
\end{equation}
where $m\ge n$, $\ell:\R^{m\times n}\rightarrow \R$  is continuously differentiable and $g:\mathbb{R}^{m\times n}\rightarrow \mathbb{R}$ is a proximal friendly weakly convex function. 
Extending beyond classical quadratic programming with quadratic constraints, 
these problems constitute a fundamental class of nonconvex optimization challenges with broad applications across scientific and engineering domains, including principal component analysis \cite{jolliffe2016principal, journee2010generalized, wang2023linear}, low-rank matrix completion \cite{keshavan10matrix, vandereycken2013low}, group synchronization \cite{ling2022near, liu2023unified, zhu2023rotation}, dictionary learning \cite{sun2016completeI, cherian2016riemannian}, and deep learning \cite{saxe2014exact,arjovsky2016unitary,kingma2018glow}, among others. 

General orthogonally constrained optimization problems are nonconvex, posing significant challenges for traditional nonlinear programming methods. Over the past decades, Riemannian optimization \cite{absil2009optimization, boumal2023introduction} has emerged as a powerful approach for tackling these problems by leveraging their manifold structures, facilitated by well-defined and computable exponential maps. This approach transforms the original constrained problem into an intrinsic unconstrained formulation, shifting the focus to additional constraints beyond orthogonality. However, the interplay of nonsmoothness (introduced by $g$) and orthogonality constraints severely complicates the Riemannian paradigm. To solve the problem \eqref{eq:problem_composite}, \cite{chen2020proximal} introduced the ManPG algorithm, which applies a proximal gradient method on the tangent space and employs a carefully designed step size to control local errors arising from the geometric structure. With a modified nonconvex subproblem, Huang and Wei \cite{huang2022riemannian} extended ManPG and established its iterative convergence using the Riemannian Kurdyka-\L ojasiewicz inequality. However, these approaches require a strongly convex subproblem to be solved iteratively using the semi-smooth Newton method, resulting in an overall computational complexity of $\mathcal{O}(n^4)$, which may pose scalability challenges as the problem size increases. 

To overcome this issue, another line of research building on Riemannian primal-dual methods has been proposed, which introduces ancillary variables to separate the nonsmooth objective and the orthogonality constraints. Initiated by \cite{kovnatsky2016madmm}, this approach has inspired numerous theoretical studies, leading to the development of various Riemannian Lagrangian-based algorithms for solving \eqref{eq:problem_composite}. In particular, \cite{deng2023manifold, zhou2023semismooth, deng2025oracle, xu2025oracle} investigate the Riemannian augmented Lagrangian method and establish asymptotic and non-asymptotic convergence results, incorporating additional linear/nonlinear composite terms within the nonsmooth function $g$. Moreover, leveraging a different splitting scheme, \cite{li2025riemannian} proposed a single-loop Riemannian alternating direction method of multipliers (RADMM) with an iterative complexity guarantee.
 
Nevertheless, the aforementioned Riemannian-based methods still face several challenges in solving \eqref{eq:problem_composite}. First, with very few recent exceptions, the majority of these algorithms typically rely on a double-loop framework, requiring the solution to a subproblem in each iteration. Second, as a common issue highlighted by \cite{ablin2022fast}, these approaches often suffer from geodesic-based retraction difficulties: accumulated errors over iterations can compromise feasibility. Moreover, most of retraction operations (including geodesic and projection-based variants) frequently involve expensive linear algebra computations, such as matrix inversion and exponentiation, which become increasingly prohibitive as the matrix dimension grows and are challenging to parallelize on modern hardware.

In this paper, we propose a single-loop retraction-free approach for solving nonsmooth optimization problems with orthogonality constraints \eqref{eq:problem_composite} from a primal-dual perspective. Instead of adopting a Riemannian approach, we introduce dual variables to handle the nonconvex orthogonality constraints. Specifically, we develop a linearized smoothing augmented Lagrangian method (LSALM), which incorporates a smoothing technique for the nonsmooth objective function within the standard augmented Lagrangian framework. Our proposed LSALM involves no subproblems and all steps of the algorithm are explicit, requiring neither expensive matrix inversion nor exponential function computation.

A fundamental challenge in analyzing primal-dual algorithms for the nonsmooth nonconvex problem \eqref{eq:problem_composite} lies in delicately balancing the primal and dual variables to ensure the eventual feasibility of the iterates. To overcome this theoretical bottleneck, we identify a locally uniform constraint qualification (UCQ) condition for the orthogonality constraints and design an appropriate iterative scheme to ensure the generated sequence remains within this region. By exploiting the benign landscape within this locally UCQ region, we prove a crucial geometric property that any stationary point of the quadratic penalty function for the nonconvex orthogonality constraints is also a global minimizer. This result helps ensure the feasibility of the updates, and thereby guarantees global convergence.

Building upon these theoretical observations, we establish the first global complexity result for retraction-free algorithms applied to nonsmooth problems with orthogonality constraints. Among existing retraction-free algorithms, the penalty-based dissolving approach \cite{hu2024constraint} handles nonsmoothness but lacks iteration complexity guarantees, while the landing field approaches \cite{ablin2022fast,ablin2024infeasible} and the related primal-dual algorithm in \cite{gao2019parallelizable} rely on smoothness assumptions. On the other hand, methods such as SOC \cite{lai2014splitting} and PAMAL \cite{chen2016augmented} address nonsmooth formulations and avoid Riemannian retractions such as exponential maps, but still require projection onto the Stiefel manifold via SVD in their subproblems. In our work, we show that LSALM achieves a global iteration complexity of $\mathcal{O}(\epsilon^{-3})$ for finding $\epsilon$-KKT points, which matches the best-known results including Riemannian algorithms \cite{beck2023dynamic,peng2023riemannian,deng2025oracle,xu2025oracle}.
Additionally, we establish the asymptotic sequential convergence of LSALM under the standard Kurdyka-\L ojasiewicz (K\L) property, and the limiting point can be proved to be an $ \mathcal{O}(\epsilon)$-KKT point. The theoretical advantages of our approach compared to existing algorithms are summarized in the following table.

\begin{table}[htbp]\scriptsize
  \centering
\begin{tabular}{cccccc}
  \toprule
& Nonsmooth  & Single-loop & Complexity  & Sequential Convergence & Retraction-Free \\ \midrule
 SOC \cite{lai2014splitting}/PAMAL \cite{chen2016augmented}                         & \checkmark &             &          ---                    &                        &                 \\
  PCAL \cite{gao2019parallelizable}/Landing \cite{ablin2022fast} &            & \checkmark  & $\mathcal{O}(\epsilon^{-2})$ &                        & \checkmark      \\
  ManPG \cite{chen2020proximal}                       & \checkmark &             & $\mathcal{O}(\epsilon^{-2})^*$ &            &                 \\
  RPG \cite{huang2022riemannian}                       & \checkmark &             & $\mathcal{O}(\epsilon^{-2})^*$ &    \checkmark      &                 \\
   ManAL \cite{deng2025oracle,xu2025oracle}                           & \checkmark &   & $\mathcal{O}(\epsilon^{-3})$ &                        &       \\
    RADMM \cite{li2025riemannian}                                                     & \checkmark & \checkmark  & $\mathcal{O}(\epsilon^{-4})$ &                        &                 \\
  Smoothing RGD \cite{beck2023dynamic,peng2023riemannian}                           & \checkmark & \checkmark  & $\mathcal{O}(\epsilon^{-3})$ &                        &       \\ 
  LSALM (ours)                                                                     & \checkmark & \checkmark  & $\mathcal{O}(\epsilon^{-3})$ & \checkmark             & \checkmark      \\
  \bottomrule
\end{tabular}
\\\vspace{1mm}{\raggedleft\footnotesize{$^*$ Subproblem solver required due to lack of explicit solution.}\par}
\caption{Comparison of algorithms for nonconvex structured problems with orthogonality constraints}
\end{table}

Numerical experiments demonstrate that the proposed LSALM exhibits robust performance with respect to parameter choices, aligning well with the theoretical convergence guarantees and converging reliably in practice. On the nonsmooth sparse PCA task, LSALM consistently achieves the significantly lower CPU time and per-iteration cost compared to several popular algorithms. For instance, on a problem of size $(m,n)=(800,400)$, LSALM completes in 41.0s, outperforming ManPG-Ada (763.2s), SOC (231.1s), and RADMM (106.0s), while attaining comparable objective values and sparsity levels. Moreover, LSALM demonstrates superior scalability, benefiting from a more favorable dependence on problem dimension by its fast convergence and a low per-iteration computational cost involving only matrix multiplications. Beyond nonsmooth problems, LSALM also performs competitively on a smooth graph matching benchmark, matching or improving upon the objective and F-measure of state-of-the-art baselines while being comparable in CPU time. These results highlight LSALM as a versatile and efficient algorithm across both nonsmooth and smooth orthogonal constrained optimization tasks.

\subsection{Notation}
Throughout the paper, we use the standard notation. Let the Euclidean space of all $m\times n$ real matrices $\mathbb{R}^{m\times n}$ be equipped with inner product $\langle \bm{X},\bm{Y}\rangle:=\operatorname{tr}(\bm{X}^\top \bm{Y})$ for any $\bm{X},\bm{Y}\in\mathbb{R}^{m\times n}$. Its induced Frobenius norm is denoted by $\Vert\cdot\Vert_F$, and the operator norm is denoted by $\| \cdot \|$.
% , and the infinity norm is defined as $\| \bm{X} \|_{\infty}:=\max_{i,j}|\bm{X}_{ij}|$. 
Let $\mathbb{S}^{n}$ denote the set of real symmetric $n\times n$ matrices. Let $\mathcal{M}\subseteq\R^{m\times n}$ be an embedded smooth manifold and the tangent (resp. normal) space at $\bm{X}\in\mathcal{M}$ is denoted by $\operatorname{T}_{\bm{X}} \mathcal{M}$ (resp. $\operatorname{N}_{\bm{X}} \mathcal{M}$). We consider the Riemannian metric $\langle\cdot,\cdot\rangle_{\bm{X}}$ on $\mathcal{M}$ that is induced from the Euclidean inner product, i.e, at $\bm{X}\in \mathcal{M}$ we have $\langle\bm{\xi}, \bm{\eta}\rangle_{\bm{X}}:=\langle \bm{\xi}, \bm{\eta}\rangle$ for any $\bm{\xi}, \bm{\eta} \in \operatorname{T}_{\bm{X}} \mathcal{M}$. For simplicity we denote $G(\bX):=\bX^\top \bX-\bI_n$, and denote the Stiefel manifold by $\St(m,n):=\{\bm{X}\in\mathbb{R}^{m\times n}: G(\bX)=\bz\}$. For any set $\mathcal{X} \subseteq \mathbb{R}^{m\times n}$, we use $\iota_{\mathcal{X}}: \mathbb{R}^{m\times n} \rightarrow \{0, +\infty\}$ to denote the indicator function associated with $\mathcal{X}$ and $\operatorname{proj}_{\mathcal{X}}$ to denote the orthogonal projection onto $\mathcal{X}$. The proximal mapping of a proper lower-semicontinuous function $g:\R^{m\times n}\rightarrow \R \cup \{+\infty\}$  at the point $\bX \in \mathbb{R}^{m\times n}$ is defined by $\prox_{g}(\bX) := \mathop{\argmin}_{\bZ\in \R^{m\times n}} \{ g(\bZ) + \frac{1}{2} \|\bZ-\bX\|_F^2\}$. 

\subsection{Organization}
    The rest of the paper is organized as follows. Section \ref{sec:prelim} introduces the key definitions and preliminary results linking the nonlinear programming and Riemannian optimization. In Section \ref{sec:main}, we propose our primal-dual algorithm to solve the orthogonal constrained problem \eqref{eq:problem_composite}. The global convergence rate results are established in Section \ref{sec:global} with asymptotic iterative convergence guarantees in Section \ref{sec:sequential}. Section~\ref{sec:numerical} presents numerical results, including a study of algorithmic parameters and a performance comparison with related algorithms on specific nonsmooth and smooth problems. Finally, we end with some closing remarks in Section \ref{sec:conclusion}. Some standard definitions and auxiliary lemmas are provided in the appendix.

\section{Preliminaries}
\label{sec:prelim}

To begin with, we briefly characterize the first-order optimality conditions and identify suitable stationarity measures to evaluate the convergence behavior from the primal-dual perspective. We highlight its connection to the corresponding notions in the Riemannian setting, enabling the comparison between the Riemannian and our approaches from the nonlinear programming.

Recall the problem \eqref{eq:problem_composite}. Let $\rho\ge0$ and denote the augmented Lagrangian function as
\begin{equation*}
\mathcal{L}_{\rho}(\bX,\bY):=f(\bX)+\langle \bY,\bX^\top\bX-\bI_n\rangle+\frac{\rho}{2}\|\bX^\top \bX-\bI_n\|_F^2.
\end{equation*}
In the remainder of the paper, we consider the weakly convex objective function $f$ defined as follows.
\begin{defi}
The function $f: \R^{m\times n} \rightarrow \R$ is $\mu$-weakly convex on $\mathcal{X}\subseteq \R^{m\times n}$ if for any $\bX, \bX' \in \mathcal{X}$ and $\tau \in[0,1]$,
$$
f(\tau \bX+(1-\tau) \bX') \leq \tau f(\bX)+(1-\tau) f(\bX')+\frac{\mu \tau(1-\tau)}{2}\|\bX-\bX'\|_F^{2}.
$$
When $f$ is locally Lipschitz, it is equivalent to saying that $f+\frac{\mu}{2}\|\cdot\|_F^{2}$ is convex on $\mathcal{X}$.
\end{defi}

Since weakly convex functions are subdifferentially regular, we can utilize the Fr\'{e}chet subdifferential in the above definitions without confusion with other notions such as the limiting or Clarke subdifferentials. A brief overview of various subdifferential constructions in nonsmooth nonconvex optimization can be found in \citep{li2020understanding}. Then we have the following definition of the KKT points as in standard nonlinear programming.

\begin{defi}[KKT point]\label{defi:KKT}
The pair $(\bX, \bY)\in\mathbb{R}^{m\times n}\times \mathbb{S}^{n}$ is a KKT point of \eqref{eq:problem_composite} if 
$$
\bz\in\partial f(\bX)+  2\bX\bY,\quad \bX^\top \bX=\bI_n.
$$
Moreover, it is an $\epsilon$-KKT point if 
\[
\dist(-2\bX\bY,\partial f(\bX))\leq\epsilon,\quad \|\bX^\top \bX- \bI_n\|_F\leq\epsilon.
\]
\end{defi}

On the other hand, we introduce the following concept of a stationary point, which is widely used in Riemannian optimization; see also Appendix~\ref{sec:Riesubdiff}.

\begin{defi}[Stationary point]\label{def:stationary}
The point $\bX\in \St(m,n)$ is called a stationary point if
$$
\bz\in\operatorname{proj}_{\operatorname{T}_{\bX} \St(m,n)}(\partial f(\bX))
$$
and it is an $\epsilon$-stationary point if 
\[
\dist(\bz,\operatorname{proj}_{\operatorname{T}_{\bX} \St(m,n)}(\partial f(\bX)))\leq\epsilon. 
\]
\end{defi}

Now, we establish the following relationship between $\epsilon$-KKT and $\epsilon$-stationary points.
\begin{lemma}
The following implications hold: 
\begin{enumerate}[label=\normalfont(\alph*)]
    \item  If $(\bX, \bY)$ is an $\epsilon$-KKT point, then $\bX$ is an $(1+2\|\bX\|\|\bY\|)\epsilon$-stationary point;
    \item If $\bX$ is an $\epsilon$-stationary point, then $\bX$ is the $\bX$-coordinate of an $\epsilon$-KKT point.
\end{enumerate}

\end{lemma}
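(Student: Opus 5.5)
The plan is to work directly with the explicit formula for the tangent projection onto the Stiefel manifold,
\[
\operatorname{proj}_{\operatorname{T}_{\bX}\St(m,n)}(\bm{V})=\bm{V}-\bX\,\mathrm{sym}(\bX^\top\bm{V}),\qquad \mathrm{sym}(\bm{A}):=\tfrac12(\bm{A}+\bm{A}^\top).
\]
We also use the fact that for $\bX\in\St(m,n)$ this map is an orthogonal projection, hence linear and nonexpansive in $\|\cdot\|_F$. Since an $\epsilon$-KKT point need not lie exactly on $\St(m,n)$, in part (a) we interpret $\operatorname{proj}_{\operatorname{T}_{\bX}\St(m,n)}$ through the formula above. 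The factor $G(\bX)=\bX^\top\bX-\bI_n$ will then appear explicitly in the error.

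For (a), pick $\bm{V}\in\partial f(\bX)$ with $\|\bm{V}+2\bX\bY\|_F\le\epsilon$, which exists by Definition~\ref{defi:KKT}. By linearity,
\[
\operatorname{proj}(\bm{V})=\operatorname{proj}(\bm{V}+2\bX\bY)-\operatorname{proj}(2\bX\bY).
\]
For the second term we use $\bY\in\mathbb{S}^n$ and $\bX^\top\bX=\bI_n+G(\bX)$. This gives $\mathrm{sym}(\bX^\top 2\bX\bY)=2\bY+2\,\mathrm{sym}(G(\bX)\bY)$, and therefore
\[
\operatorname{proj}(2\bX\bY)=-2\bX\,\mathrm{sym}(G(\bX)\bY).
\]
Its Frobenius norm is at most $2\|\bX\|\|\bY\|\|G(\bX)\|_F\le 2\|\bX\|\|\bY\|\epsilon$. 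For the first term we bound $\|\operatorname{proj}(\bm{V}+2\bX\bY)\|_F\le\|\bm{V}+2\bX\bY\|_F\le\epsilon$ via nonexpansiveness. Adding the two bounds gives $(1+2\|\bX\|\|\bY\|)\epsilon$.

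The main obstacle is this nonexpansiveness step. It is exact when $\bX\in\St(m,n)$, but in that case the second term vanishes anyway. When $\bX$ is only approximately feasible, the formula is no longer an orthogonal projection. There I would either use the Stiefel case as the intended reading or absorb the discrepancy into a higher-order $O(\|G(\bX)\|_F)$ term. I expect the paper's constant reflects the former.

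For (b), let $\bX\in\St(m,n)$ and $\bm{V}\in\partial f(\bX)$ with $\|\operatorname{proj}(\bm{V})\|_F\le\epsilon$. Define the symmetric multiplier $\bY:=-\tfrac12\,\mathrm{sym}(\bX^\top\bm{V})\in\mathbb{S}^n$. Then $\bm{V}+2\bX\bY=\bm{V}-\bX\,\mathrm{sym}(\bX^\top\bm{V})=\operatorname{proj}(\bm{V})$, so $\dist(-2\bX\bY,\partial f(\bX))\le\epsilon$. Feasibility $\|G(\bX)\|_F=0\le\epsilon$ is immediate, so $(\bX,\bY)$ is an $\epsilon$-KKT point.
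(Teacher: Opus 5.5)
Your proposal is correct and is essentially the paper's argument: the residual $-2\bX\,\mathrm{sym}(G(\bX)\bY)$ you isolate is exactly the paper's term $\bX((\bI_n-\bX^\top\bX)\bY+\bY(\bI_n-\bX^\top\bX))$, and your multiplier in (b) is the paper's $\bY=\frac14(\bX^\top\bZ+\bZ^\top\bX)$ with $\bZ=-\bm{V}$. The one difference is in closing (a): the paper bounds $\dist(\bz,\partial f(\bX)+\operatorname{N}_{\bX}\St(m,n))$ instead of invoking nonexpansiveness, and your off-manifold worry is in any case harmless, because $\bm{W}\mapsto\bm{W}-\bX\,\mathrm{sym}(\bX^\top\bm{W})$ is self-adjoint with quadratic form $\|\bm{W}\|_F^2-\|\mathrm{sym}(\bX^\top\bm{W})\|_F^2\in[(1-\|\bX\|^2)\|\bm{W}\|_F^2,\|\bm{W}\|_F^2]$, so it is nonexpansive whenever $\|\bX\|^2\le 2$ (for instance when $\|G(\bX)\|_F\le\epsilon\le 1$) and the stated constant follows without any higher-order correction.
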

\begin{proof}
First, suppose that the pair $(\bX, \bY)\in\mathbb{R}^{m\times n}\times \mathbb{S}^{n}$ is an $\epsilon$-KKT point. Then, 
there exists $\bm{\zeta}\in\mathbb{R}^{m\times n}$ such that
\[
\bm{\zeta}\in\partial f(\bX)+  2\bX\bY,\quad \|\bm{\zeta}\|_F\leq \epsilon,\quad \|\bX^\top \bX-\bI_n\|_F\leq \epsilon.
\]
Let $\bZ:=2\bX \bY$. Then we know that
\[
\bm{\zeta}=-2\bX\bY+\bm{\zeta}+2\bX\bY\in\partial f(\bX)+\frac{1}{2}\bX(\bX^\top \bZ+\bZ^\top \bX)+\bX( (\bI_n-\bX^\top\bX) \bY+ \bY^\top (\bI_n-\bX^\top\bX)^\top).
\]
This implies that
\begin{align}\label{eq:kkttostat}
\dist\left(\bz,\partial f(\bX)+\frac{1}{2}\bX(\bX^\top \bZ+\bZ^\top \bX)\right)
&\leq\|\bm{\zeta}-\bX( (\bI_n-\bX^\top\bX) \bY+ \bY^\top (\bI_n-\bX^\top\bX)^\top)\|_F\notag\\
&\leq\epsilon+2\|\bX\|\|\bY\|\epsilon.
\end{align}
On the other hand, since
$
\frac{1}{2}\bX(\bX^\top \bZ+\bZ^\top \bX)\in \operatorname{N}_{\bX} \St(m,n)
$,
we know from \eqref{eq:kkttostat} that 
\[
\dist(\bz,\operatorname{proj}_{\operatorname{T}_{\bX} \St(m,n)}(\partial f(\bX)))=\dist(\bz,\partial f(\bX)+\operatorname{N}_{\bX} \St(m,n))
\leq (1+2\|\bX\|\|\bY\|)\epsilon.
\]

Conversely, suppose that $\bX\in \St(m,n)$ is an $\epsilon$-stationary point. Then there exist $\bm{\xi}\in\mathbb{R}^{m\times n}$ and $\bZ\in\mathbb{R}^{m\times n}$ such that
\[
\bm{\xi}\in \partial f(\bX)+\frac{1}{2}\bX(\bX^\top \bZ+\bZ^\top \bX)\subseteq\partial f(\bX)+\operatorname{N}_{\bX}\St(m,n) \quad \text{and}\quad \|\bm{\xi}\|_F\leq\epsilon.
\]
By setting $\bY=\frac{1}{4}(\bX^\top \bZ+\bZ^\top\bX)$ we have
\[
\begin{aligned}
\bm{\xi}=\bm{\xi}-\frac{1}{2}\bX(\bX^\top \bZ+\bZ^\top \bX)+  2\bX\bY\in\partial f(\bX)+  2\bX\bY.
\end{aligned}
\]
Thus, $(\bX,\bY)$ is a pair of $\epsilon$-KKT point. The proof is complete.
\end{proof}

\section{Linearized Smoothing Augmented Lagrangian Method}\label{sec:main}
In this section, we address the first-order algorithmic design of the optimization problem~\eqref{eq:problem_composite} from a primal-dual perspective. To handle the nonsmoothness inherent in the objective, a natural starting point is to apply a Moreau envelop smoothing technique to the standard augmented Lagrangian function. This leads to the following iterative gradient-based scheme, built upon the smoothing augmented Lagrangian method:
\begin{equation}\label{eq:smoothingscheme}
\left\{
\begin{aligned}
\bZ^{k+1} &:= \bZ^k + \beta (\bX^{k+1} - \bZ^k),
\ \text{where}\
\bX^{k+1} := \argmin_{\bX\in\mathcal{X}}\,\left\{\mathcal{L}_{\rho}(\bX,\bY^k)+\frac{r}{2}\|\bX-\bZ^k\|_F^2\right\},\\[6pt]
\bY^{k+1} &:= \proj_{\mathcal{Y}}(\bY^k+\alpha ((\bX^{k+1})^\top\bX^{k+1}-\bI_n))
\end{aligned}
\right.
\end{equation}
Here, $\mathcal{X} \subset \R^{m \times n}$ is introduced as a compact ambient domain constraint, and $\mathcal{Y}$ denotes the domain of the dual variables. Note that since the feasible set induced by the orthogonality constraint $\bX^\top\bX=\bI_n$ is naturally bounded, explicitly restricting the primal updates within a sufficiently large compact set $\mathcal{X}$ does not alter the optimal solutions of the original problem, while safely preventing the sequence from diverging during the infeasible phase. Furthermore, $r > 0$ is the smoothing parameter. By choosing $r$ sufficiently large, we ensure that the surrogate function $\bX\mapsto\mathcal{L}_{\rho}(\bX,\bY)+\frac{r}{2}\|\bX-\bZ\|_F^2$ becomes strongly convex over $\mathcal{X}$. This strong convexity guarantees that the primal minimization step is well-defined and unique, which consequently ensures the continuous differentiability of the Moreau envelope:
\[
\bZ\mapsto\min_{\bX\in\mathcal{X}} \left\{\mathcal{L}_{\rho}(\bX,\bY)+\frac{r}{2}\|\bX-\bZ\|_F^2\right\}.
\] 
Under this condition, the overall update scheme \eqref{eq:smoothingscheme} can be interpreted as performing a gradient descent-ascent step (with stepsizes $\beta$ and $\alpha$, respectively) on the smoothed augmented Lagrangian function.

The well-definedness of the parameter $r$ is ensured by the following lemmas. We begin by verifying this through the weak convexity of the quadratic penalty function associated with the orthogonality constraint, which is a nontrivial result as the penalty function is quartic and typically is not weakly convex.

\begin{lemma}\label{lem:sc}
The function $\bX\mapsto \frac{1}{2}\|\bX^{\top}\bX-\bI_n\|_F^2$ is 2-weakly convex.
\end{lemma}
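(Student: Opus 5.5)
The plan is to reduce the claim to the convexity of a single quartic function. By the definition of weak convexity (the function here is smooth, hence locally Lipschitz), it suffices to show that $h(\bX):=\frac{1}{2}\|\bX^\top\bX-\bI_n\|_F^2$ satisfies: $h+\|\cdot\|_F^2$ is convex on $\R^{m\times n}$. That is the $\mu=2$ case, since $\frac{\mu}{2}\|\cdot\|_F^2=\|\cdot\|_F^2$.

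First I expand the square:
\[
h(\bX)=\tfrac12\|\bX^\top\bX\|_F^2-\tr(\bX^\top\bX)+\tfrac{n}{2}=\tfrac12\|\bX^\top\bX\|_F^2-\|\bX\|_F^2+\tfrac n2 .
\]
Hence $h(\bX)+\|\bX\|_F^2=\frac12\|\bX^\top\bX\|_F^2+\frac n2$. The whole statement therefore reduces to showing that $q(\bX):=\frac12\tr\big((\bX^\top\bX)^2\big)$ is convex. The only potential difficulty is that $q$ is quartic rather than quadratic, so its convexity has to be checked directly.

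To check it, I use the second-order criterion along lines. Fix $\bX$ and a direction $\bm{D}$, and set
\[
\bm{M}(t):=(\bX+t\bm{D})^\top(\bX+t\bm{D})=\bX^\top\bX+t(\bX^\top\bm{D}+\bm{D}^\top\bX)+t^2\bm{D}^\top\bm{D}.
\]
Then $\phi(t):=q(\bX+t\bm{D})=\frac12\tr(\bm{M}(t)^2)$, and differentiating twice gives
\[
\phi''(0)=\tr\big(\bm{M}'(0)^2\big)+\tr\big(\bm{M}(0)\bm{M}''(0)\big)=\|\bX^\top\bm{D}+\bm{D}^\top\bX\|_F^2+2\tr(\bX^\top\bX\bm{D}^\top\bm{D}).
\]
The first term is nonnegative because $\bm{M}'(0)$ is symmetric. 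The second term equals $2\|\bX\bm{D}^\top\|_F^2\ge0$ by cyclicity of the trace. Thus $\phi''(0)\ge0$ for every $\bX$ and every $\bm{D}$. This shows that $q$ is convex, and hence that $h$ is $2$-weakly convex.

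The main point to get right is the sign of the cross term $\tr(\bm{M}\bm{M}'')$. The argument works because $\bm{M}(0)=\bX^\top\bX$ and $\bm{M}''(0)=2\bm{D}^\top\bm{D}$ are both positive semidefinite, so the trace of their product is nonnegative. The constant $2$ in the statement arises precisely from cancelling the $-\|\bX\|_F^2$ term in the expansion of $h$.

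As an alternative route, one could note that $\bX\mapsto\bX^\top\bX$ is convex in the Loewner order and that $\bm{A}\mapsto\frac12\tr(\bm{A}^2)$ is convex and monotone on the positive semidefinite cone. Composing the two would give the convexity of $q$ as well.
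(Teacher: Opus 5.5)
Your proof is correct and is essentially the paper's argument. The paper computes the second derivative of $h$ directly and obtains $\|\bX^\top\bm{H}+\bm{H}^\top\bX\|_F^2+2\|\bX\bm{H}^\top\|_F^2-2\|\bm{H}\|_F^2$, which is your $\phi''(0)$ for $q$ minus the $2\|\bm{H}\|_F^2$ you removed up front by cancelling the $-\|\bX\|_F^2$ term; your explicit reduction to convexity of $\frac12\|\bX^\top\bX\|_F^2$ is a tidier packaging of that same computation.
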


\begin{proof}

First, we denote $h(\bX):=\frac{1}{2}\|\bX^{\top}\bX-\bI_n\|_F^2$. Then the gradient of the function $h$ is
$$
\nabla h(\bX)=2\bX(\bX^{\top}\bX-\bI_n).
$$
Also, we have the Hessian at $\bX$ satisfies for any $\bm{H}_{\bX} \in \mathbb{R}^{m \times n}$ that
\begin{equation}\label{Hess-def}
\begin{aligned}
\nabla^2 h(\bX)\left[\bm{H}_{\bX}\right]
&=\operatorname{D} \nabla h(\bX)\left[\bm{H}_{\bX}\right]\\
&= \lim_{t\rightarrow 0}\frac{2(\bX+t\bm{H}_{\bX})((\bX+t\bm{H}_{\bX})^{\top}(\bX+t\bm{H}_{\bX})-\bI_n)-2\bX(\bX^{\top}\bX-\bI_n)}{t}\\
&=2\bX(\bX^{\top}\bm{H}_{\bX}+\bm{H}_{\bX}^\top\bX)+2\bm{H}_{\bX}(\bX^\top\bX-\bI_n),\notag
\end{aligned}
\end{equation}
where $\operatorname{D} \nabla h(\bX)\left[\bm{H}_{\bX}\right]$ stands for the classical directional derivative. Thus,
\begin{align*}
\langle\nabla^2 h(\bX)\left[\bm{H}_{\bX}\right],\bm{H}_{\bX}\rangle
&=2\operatorname{tr}(\bm{H}_{\bX}^\top\bX\bX^{\top}\bm{H}_{\bX}+\bm{H}_{\bX}^\top\bX\bm{H}_{\bX}^\top\bX)+2\operatorname{tr}(\bm{H}_{\bX}^\top\bm{H}_{\bX}(\bX^{\top}\bX-\bI_n))\\
&=\|\bX^\top \bm{H}_{\bX}+\bm{H}_{\bX}^\top \bX\|_F^2+2\operatorname{tr}(\bm{H}_{\bX}(\bX^{\top}\bX-\bI_n)\bm{H}_{\bX}^\top)\\
&=\|\bX^\top \bm{H}_{\bX}+\bm{H}_{\bX}^\top \bX\|_F^2+2\|\bX \bm{H}_{\bX}^{\top}\|_F^2-2\|\bm{H}_{\bX}\|_F^2.
% \ge0
\end{align*}
Then we know by definition that $h$ is 2-weakly convex. Moreover,
when $\lambda_{\min}(\bX^\top\bX-\bI_n)\ge0$, one has
$\langle\nabla^2 h(\bX)\left[\bm{H}_{\bX}\right],\bm{H}_{\bX}\rangle\ge0$, and then $h$ is convex over the set $\{\bX\in\mathbb{R}^{m\times n}:\lambda_{\min}(\bX^\top\bX-\bI_n)\ge0\}$.
\end{proof}

By taking into account the composite structure of~\eqref{eq:problem_composite} in the following assumption, we can show that the Lagrangian function $\mathcal{L}_{\rho}(\cdot, \bY)$ is weakly convex.
\begin{ass}\label{ass:basic}
For the problem \eqref{eq:problem_composite}, the function $g:\mathbb{R}^{m\times n}\rightarrow \mathbb{R}$ is $L_g$-weakly convex  on some $\mathcal{X}\subseteq\R^{m\times n}$,
and $\ell:\R^{m\times n}\rightarrow \R$  is  smooth and its gradient is $L_\ell$-Lipschitz continuous on $\mathcal{X}$, i.e.,
\[
\|\nabla \ell(\bX)-\nabla \ell(\bX')\|_F \leq L_{\ell}\|\bX-\bX'\|_F \quad \text { for all } \bX, \bX' \in  \mathcal{X};
\]
Without loss of generality, we may take  $L=L_{\ell}+L_g$.
\end{ass}

\begin{lemma}\label{lem:lagweakcvx}
Suppose that Assumption \ref{ass:basic} holds, then the Lagrangian function $\mathcal{L}_{\rho}(\cdot, \bY)$  is $ \mu_{\rho, \bY} := (L+ 2\| \bY \| + 2\rho)$-weakly convex on $\mathcal{X}$ for any $\bY\in\mathbb{S}^{n}$.
\end{lemma}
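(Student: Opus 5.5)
The plan is to write $\mathcal{L}_{\rho}(\cdot,\bY)$ as a sum of four terms, show that each term becomes convex on $\mathcal{X}$ after adding a suitable multiple of $\frac12\|\cdot\|_F^2$, and add up the moduli. We use the decomposition
\[
\mathcal{L}_{\rho}(\bX,\bY)=\ell(\bX)+g(\bX)+\langle \bY,\bX^\top\bX-\bI_n\rangle+\rho\, h(\bX),\qquad h(\bX):=\tfrac12\|\bX^\top\bX-\bI_n\|_F^2 .
\]
Since weak convexity moduli add under summation (a sum of convex functions is convex), it suffices to show the following.
\begin{enumerate}[label=\normalfont(\roman*)]
    \item $\ell$ is $L_\ell$-weakly convex on $\mathcal{X}$.
    \item $g$ is $L_g$-weakly convex on $\mathcal{X}$; this is Assumption~\ref{ass:basic}.
    \item $\bX\mapsto\langle\bY,\bX^\top\bX\rangle$ is $2\|\bY\|$-weakly convex.
    \item $\rho h$ is $2\rho$-weakly convex; this follows from Lemma~\ref{lem:sc}, because scaling by $\rho\ge0$ scales the modulus.
\end{enumerate}
The sum of the moduli is then $L_\ell+L_g+2\|\bY\|+2\rho=\mu_{\rho,\bY}$.

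For (i), we use the standard fact that an $L_\ell$-Lipschitz gradient gives the lower quadratic bound $\ell(\bX')\ge \ell(\bX)+\langle\nabla\ell(\bX),\bX'-\bX\rangle-\frac{L_\ell}{2}\|\bX'-\bX\|_F^2$ on $\mathcal{X}$. Equivalently, $\nabla\ell+L_\ell\,\mathrm{Id}$ is monotone, since $\langle\nabla\ell(\bX)-\nabla\ell(\bX'),\bX-\bX'\rangle\ge -L_\ell\|\bX-\bX'\|_F^2$ by Cauchy--Schwarz. Either form shows that $\ell+\frac{L_\ell}{2}\|\cdot\|_F^2$ is convex. To use the monotonicity characterization of convexity we need $\mathcal{X}$ to be convex. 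I would treat $\mathcal{X}$ as convex, which is implicit in the definition's use of $\tau\bX+(1-\tau)\bX'$; alternatively one can work directly with the three-point inequality in the definition along segments.

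For (iii), the map $q(\bX)=\langle\bY,\bX^\top\bX\rangle=\operatorname{tr}(\bX\bY\bX^\top)$ is quadratic with Hessian form $\langle\nabla^2 q(\bX)[\bm{H}],\bm{H}\rangle=2\operatorname{tr}(\bm{H}\bY\bm{H}^\top)$. Since $\bY$ is symmetric, $\bY\succeq-\|\bY\|\bI_n$, so this form is at least $-2\|\bY\|\|\bm{H}\|_F^2$. Hence $q+\|\bY\|\|\cdot\|_F^2$ has a positive semidefinite constant Hessian, so it is convex, and $q$ is $2\|\bY\|$-weakly convex. The constant term $-\langle\bY,\bI_n\rangle$ does not affect convexity.

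The main subtlety is making the additivity argument rigorous when $g$ is nonsmooth. I would not use Hessians for $g$ at all. Instead, I write
\[
\mathcal{L}_{\rho}(\cdot,\bY)+\frac{\mu_{\rho,\bY}}{2}\|\cdot\|_F^2 =\Bigl(g+\frac{L_g}{2}\|\cdot\|_F^2\Bigr)+\Bigl(\ell+\frac{L_\ell}{2}\|\cdot\|_F^2\Bigr)+\bigl(q+\|\bY\|\|\cdot\|_F^2\bigr)+\bigl(\rho h+\rho\|\cdot\|_F^2\bigr),
\]
which is a sum of four functions that are convex on $\mathcal{X}$, and so is convex. Alternatively, one can add the four defining inequalities of weak convexity directly. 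Both routes avoid differentiability of $g$ and rely only on the ``equivalently convex'' characterization recalled after the definition.
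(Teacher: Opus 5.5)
Your proof is correct and follows the paper's argument. The paper uses the same four-term decomposition with the same moduli: $L_\ell$ from the Lipschitz gradient, $L_g$ from Assumption~\ref{ass:basic}, $2\|\bY\|$ for the bilinear term, and $2\rho$ from Lemma~\ref{lem:sc}, summed to $\mu_{\rho,\bY}$. The differences are minor: you bound the Hessian form $2\operatorname{tr}(\bm{H}\bY\bm{H}^\top)\ge -2\|\bY\|\|\bm{H}\|_F^2$ where the paper instead cites $2\|\bY\|$-Lipschitzness of the gradient, and you state explicitly the convexity of $\mathcal{X}$ that the paper uses implicitly when deducing weak convexity of $\ell$ from a Lipschitz gradient on $\mathcal{X}$.
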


\begin{proof}
Recall the definition that 
\[
\mathcal{L}_{\rho}(\bX,\bY)=\ell(\bX)+g(\bX)+\langle \bY,\bX^\top\bX-\bI_n\rangle+\frac{\rho}{2}\|\bX^\top \bX-\bI_n\|_F^2.
\]
By Assumption \ref{ass:basic} we directly know that $\ell$ is $L_{\ell}$-weakly convex and then $f=\ell+g$ is $ L $-weakly convex  on $\mathcal{X}$. On the other hand, for any $ \bX $ and $ \bX' $ in $\mathcal{X}$, we have
\begin{align*}
  \| \bY\bX' - \bY\bX \|_F & = \| \bY(\bX' - \bX) \|_F \leq   \| \bY \| \| \bX' - \bX \|_F.
\end{align*}
It follows that the function $\bX\mapsto \bY \bX $ is $ \| \bY \| $-Lipschitz continuous, implying that $\bX\mapsto \langle \bY, \bX^\top \bX-\bI_n \rangle $ is $2\| \bY \|$-gradient Lipschitz. Combining these results with Lemma \ref{lem:sc}, we obtain that $ \mathcal{L}_{\rho}(\cdot, \bY) $ is $( L+ 2 \| \bY \|+ 2 \rho)$-weakly convex  on $\mathcal{X}$.
\end{proof}

Thus, under the following assumption on the compactness of the dual variable domain 
$\mathcal{Y}$, the augmented Lagrangian function becomes weakly convex.

\begin{ass}\label{ass:ycompact}
The set $\mathcal{Y}$ is convex compact with $\bz\in\mathcal{Y}$ and $\|\bY\|_F\leq R_{\bY}$ for any $\bY\in\mathcal{Y}$.
\end{ass}

Under Assumption \ref{ass:ycompact}, we know from Lemma \ref{lem:lagweakcvx} that for
  \begin{equation*}
 \mu_{\rho} := L + 2R_{\bY} + 2\rho,
\end{equation*} 
it follows $ \mu_{\rho} \geq \mu_{\rho, \bY} $ for all $ \bY \in \mathcal{Y} $ and $ \mathcal{L}_{\rho}(\cdot, \bY) $ is $\mu_{\rho}$-weakly convex on $\mathcal{X}$ for all $ \bY \in \mathcal{Y} $. Then the smoothing parameter can now be properly defined as $r> \mu_{\rho}$.

Despite its theoretical validity, the smoothing scheme \eqref{eq:smoothingscheme} poses significant computational and analytical challenges in practice. First, the presence of the quartic penalty term $\frac{\rho}{2}\|\bX^\top \bX - \bI_n\|_F^2$ implies that the $\bX$-subproblem lacks a closed-form solution, which again requires computationally expensive inner iterations. Second, the convergence of augmented Lagrangian methods often requires a sufficiently large penalty parameter $\rho$ \cite{rockafellar1976augmented, bertsekas2014constrained}. This forces the smoothing parameter $r$ to be excessively large ($r > \mu_{\rho}$). A massive $r$ practically restricts the step size of the primal update, leading to algorithmic instability and extremely slow convergence when high accuracy is desired.

To simultaneously overcome the computational bottleneck and alleviate the reliance on large smoothing parameters, we propose a linearized surrogate for the primal step. Specifically, we linearize the smooth components of the augmented Lagrangian function at a given point $\bar{\bX}\in\R^{m\times n}$, introducing a proximal parameter $\lambda > 0$:
\begin{equation*}
\begin{aligned}
\mathcal{L}_{\bar{\bX},\lambda}(\bX,\bY)
:=\ &
% f(\bar{\bX})+\langle \bY, \bar{\bX}^\top\bar{\bX}- \bI_n \rangle +\frac{\rho}{2} \| \bar{\bX}^\top\bar{\bX} - \bI_{n} \|_F^2+g(\bX)\\
% &+\langle \nabla f(\bar{\bX}), \bX - \bar{\bX} \rangle  +\langle 2\bar{\bX}(\bY+\rho (\bar{\bX}^\top\bar{\bX}-\bI_n)),\bX-\bar{\bX}\rangle+\frac{1}{2\lambda}\|\bX-\bar{\bX}\|_F^2.
\ell(\bar{\bX})+\langle \bY, \bar{\bX}^\top\bar{\bX}- \bI_n \rangle +\frac{\rho}{2} \| \bar{\bX}^\top\bar{\bX} - \bI_{n} \|_F^2+g(\bX)\\
&+\langle \nabla \ell(\bar{\bX}), \bX - \bar{\bX} \rangle  +\langle 2\bar{\bX}(\bY+\rho (\bar{\bX}^\top\bar{\bX}-\bI_n)),\bX-\bar{\bX}\rangle+\frac{1}{2\lambda}\|\bX-\bar{\bX}\|_F^2.
\end{aligned}
\end{equation*}
By replacing the exact augmented Lagrangian with this strictly convex surrogate, we derive the modified explicit primal update on the ancillary set $\mathcal{X}$:
\begin{equation}
\begin{aligned}
    \label{eq:primal_update}
    \bX^{k+1} 
    :=& \argmin_{\bX\in\mathcal{X}}  \left\{\mathcal{L}_{\bX^k,\lambda}(\bX,\bY^k)+\frac{r}{2}\|\bX-\bZ^k\|_F^2\right\}  \\
    % =&\prox_{g / (r+\lambda^{-1})+\iota_{\mathcal{X}}}\left(\frac{\frac{1}{\lambda}\bX^k+r\bZ^k- \left(\nabla f(\bX^k)+2\bX^k\bY^k+2\rho\bX^k((\bX^k)^{\top}\bX^k-\bI_n)\right)}{r+\lambda^{-1}}\right).
    =&\prox_{g / (r+\lambda^{-1})+\iota_{\mathcal{X}}}\left(\frac{\frac{1}{\lambda}\bX^k+r\bZ^k- \left(\nabla \ell(\bX^k)+2\bX^k\bY^k+2\rho\bX^k((\bX^k)^{\top}\bX^k-\bI_n)\right)}{r+\lambda^{-1}}\right).
\end{aligned}
\end{equation}

Our final assumption concerns the ancillary set $\mathcal{X}$, which should encompass the neighborhood of the orthogonality constraints while allowing sufficient flexibility for the intermediate optimization iterates. Additionally, we enforce the compactness of $\mathcal{X}$ to theoretically guarantee the boundedness of the dual variables.

\begin{ass}\label{ass:xcompact}
The set $\mathcal{X}$ in Assumption \ref{ass:basic} is compact with 
\[
\{\bX\in\mathbb{R}^{m\times n}:\|\bX^\top\bX-\bI_n\|_{F}\leq1\}\subseteq\mathcal{X},\]
and $\|\bX\|\leq R^{\operatorname{op}}_{\bX}$, $|\ell(\bX)-\ell(\bX')|\leq L_{\ell}\|\bX-\bX'\|_F$, $|g(\bX)-g(\bX')|\leq L_g\|\bX-\bX'\|_F$ for any $\bX,\bX'\in\mathcal{X}$.
\end{ass}

In the remainder of this paper, we presume that Assumptions \ref{ass:basic}, \ref{ass:ycompact}, and \ref{ass:xcompact} hold. We fix $r > \mu_{\rho}$ to ensure the well-definedness of the smoothed Lagrangian. To further stabilize the dual dynamics and ensure multiplier boundedness in the highly nonconvex landscape, we introduce a small dual regularization parameter $\varepsilon > 0$. The fully explicit, single-loop \textit{Linearized Smoothing ALM (LSALM)} is formally presented in Algorithm \ref{alg}.

\begin{algorithm}[H]\label{alg}
  \caption{Linearized Smoothing ALM (LSALM)}
  \SetKwInOut{Input}{Input}
    \Input{Initial point $\bX^0$ satisfying $(\bX^0)^\top\bX^0=\bI_n$, $\bY^0=\bz$, $\bZ^0=\bX^0$, and $\rho\ge0$, $\lambda>0$, $\alpha>0$, $\beta\in(0,1)$, $\varepsilon>0$.
    }
    \For{$k=0,1,\ldots$}{
$\bX^{k+1}:=\mathop{\argmin}_{\bX\in\mathcal{X}}\left\{\mathcal{L}_{\bX^k,\lambda}(\bX,\bY^k)+\frac{r}{2}\|\bX-\bZ^k\|_F^2\right\} $\\
    $\bZ^{k+1}:=\bZ^k+\beta(\bX^{k+1}-\bZ^k)$\\
    $\bY^{k+1}:=\proj_{\mathcal{Y}}(\bY^k+\alpha\cdot ((\bX^{k+1})^\top\bX^{k+1}-\bI_n-\varepsilon\bY^k))$ \\
     }
\end{algorithm}

\begin{rmk}
The assumptions on the set~\(\mathcal{X}\) are relatively mild and do not significantly increase the computational cost. Thanks to the flexibility in choosing \(\mathcal{X}\), the proximal operator of the composite function \(g + \iota_{\mathcal{X}}\) can often admit a closed-form solution for suitable choices of \(\mathcal{X}\) depending on the structure of \(g\). As a result, the subproblem \eqref{eq:primal_update} can be solved analytically. For example, when \(g(\bX) = 0\), i.e., \(f\) is smooth, the subproblem \eqref{eq:primal_update} reduces to projecting onto \(\mathcal{X}\). In this case, one can choose \(\mathcal{X}\) as a set that admits efficient projection. When \(g(\bX) = \mu \|\bX\|_1\), where $\| \bX \|_1:=\sum_{ij} |\bX_{ij}| $ for some \(\mu > 0\), a natural choice is \(\mathcal{X} = \{\bX \in \mathbb{R}^{m \times n} : |\bX_{i,j}| \leq c,\ \forall i \in [m],\ j \in [n]\}\) for some constant \(c > 0\). In this setting, the proximal operator \(\prox_{g + \iota_{\mathcal{X}}}(\bX)\) is given element-wise by
\[
(\prox_{g+\iota_{\mathcal{X}}}(\bX))_{i,j} = \sign(\bX_{i,j})\cdot\min\{\max\{| \bX_{i,j} | - \mu,0\},c\},\ \ \forall i \in [m], j \in [n].
% (\prox_{g+\iota_{\mathcal{X}}}(\bX))_{i,j} =
% \begin{cases}
% -c & \text{if } \bX_{i,j} \leq -c - \mu, \\
% \bX_{i,j} + \mu & \text{if } -c - \mu < \bX_{i,j} \leq -\mu, \\
% 0 & \text{if } -\mu < \bX_{i,j} < \mu, \\
% \bX_{i,j} - \mu & \text{if } \mu \leq \bX_{i,j} < c + \mu, \\
% c & \text{if } \bX_{i,j} \geq c + \mu.
% \end{cases}
\]
This corresponds to applying element-wise soft-thresholding followed by projection onto the interval \([-c, c]\), which is computationally efficient. In practice, we may also solve the subproblem~\eqref{eq:primal_update} without explicitly enforcing the domain constraint~\(\mathcal{X}\), since the iterates remain bounded and~\(\mathcal{X}\) can always be chosen sufficiently large. This is implicitly reflected in our stepsize choices during the following convergence analysis, as they depend on the radius of~\(\mathcal{X}\).
\end{rmk}

\begin{rmk} 
(i) 
When the proximal mapping is available, our algorithm operates as a single-loop method. In contrast, most existing approaches adopt a double-loop framework, where each iteration requires solving a subproblem. For instance, \cite{chen2020proximal, huang2022riemannian} rely on the semi-smooth Newton method. Other Riemannian approaches introduce different splitting strategies for the manifold constraint and the nonsmooth objective, but still require solving manifold-constrained subproblems at each iteration, e.g., \cite{deng2023manifold, deng2025oracle, xu2025oracle}. To the best of our knowledge, smoothing approaches \cite{beck2023dynamic,peng2023riemannian} and the recently proposed RADMM \cite{li2025riemannian} are among the very few methods that achieve a provably convergent single-loop scheme. However, it is well known that smoothing approaches without linearization become unstable when high-accuracy solutions are required. Furthermore, while the RADMM avoids inner loops, it only establishes a worse iteration complexity;
(ii)
We employ a dual perturbation technique to stabilize the dual update, a similar strategy also adopted in \cite{koshal2011multiuser, hajinezhad2019perturbed, lu2022single}. This perturbation acts as a Tikhonov regularization that analytically induces strong concavity in the dual function allowing us to establish a strict dual error bound and effectively bypass the reliance on the global Linear Independence Constraint Qualification (LICQ). As detailed in the subsequent convergence analysis, this perturbation mechanism serves as a cornerstone for deriving our global theoretical guarantees; (iii) The initial point should be chosen carefully due to the nonconvex nature of the orthogonality constraint. Nevertheless, the explicit structure of the constraint makes it computationally tractable to find a feasible starting point. Without a near-feasible initialization, the algorithm may quickly violate the constraints.
\end{rmk}

\section{Global Convergence of LSALM}\label{sec:global}

In this section, we investigate the convergence behavior of the proposed LSALM. Our analysis follows a structured roadmap: First, we establish the boundedness of the dual variables and ensure the feasibility of the iterates in Section \ref{subsec:feasiblity}. Next, from the sufficient decrease property of a carefully constructed potential function (Proposition \ref{prop:suff-decrease} with the proof in Appendix \ref{subsec:suff}), we derive the iteration complexity results presented in Section \ref{subsec:itercomplex}.

Before proceeding to the proof details of convergence theorems, we define the function $F:\R^{m\times n}\times \mathbb{S}^{n}\times \R^{m\times n}\rightarrow\R$
\[
F(\bX,\bY,\bZ) := \mathcal{L}_{\rho}(\bX,\bY)-\frac{\varepsilon}{2}\|\bY\|_F^2+\frac{r}{2}\|\bX-\bZ\|_F^2.
\]
Then we define the potential function $\Phi:\R^{m\times n}\times \mathbb{S}^{n}\times \R^{m\times n}\rightarrow\R$ as:
\[
\Phi(\bX,\bY,\bZ):= F(\bX,\bY,\bZ)-d(\bY,\bZ) + p(\bZ)-d(\bY,\bZ) + p(\bZ), 
\]
where $d(\bY,\bZ):=\min\limits_{\bX\in \mathcal{X}}F(\bX,\bY,\bZ)$ is the dual function and $p(\bZ):=\max\limits_{\bY\in\mathcal{Y}}\min\limits_{\bX\in\mathcal{X}}F(\bX,\bY,\bZ)$ is the proximal function. The potential function is designed to bridge the algorithmic updates with the underlying target proximal function $p(\cdot)$, which is frequently used in constrained optimization~\cite{zhang2020proximal, zhang2022global} and minimax optimization~\cite{zhang2020single, yang2022faster, li2025nonsmooth}.

To characterize the descent property of the potential function, we first identify the gradient Lipschitz constant of the smooth part of the Lagrangian function, i.e., $\mathcal{L}_{\rho}(\cdot, \bY)-g$ on $\mathcal{X}$.

\begin{lemma}\label{lemma:laglipschitz}
The function $\mathcal{L}_{\rho}(\cdot, \bY)-g$ for any $\bY\in\mathbb{S}^{n}$ is gradient Lipschitz with constant $ L_{\rho} := L_{\ell}+  2R_{\bY} + 6 \rho (R^{\operatorname{op}}_{\bX})^2+2\rho $ on $\mathcal{X}$.
\end{lemma}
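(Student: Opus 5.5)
The plan is to split $\mathcal{L}_{\rho}(\cdot,\bY)-g=\ell+\langle \bY,\bX^\top\bX-\bI_n\rangle+\rho h$, where $h(\bX)=\frac12\|\bX^\top\bX-\bI_n\|_F^2$ as in Lemma \ref{lem:sc}. We then bound the Lipschitz constant of each gradient on $\mathcal{X}$ separately and add the three bounds. By Assumption \ref{ass:basic}, $\nabla\ell$ is $L_\ell$-Lipschitz on $\mathcal{X}$. The gradient of the linear-in-$\bY$ term is $2\bX\bY$. Hence
\[
\|2\bX\bY-2\bX'\bY\|_F\le 2\|\bY\|\,\|\bX-\bX'\|_F\le 2\|\bY\|_F\|\bX-\bX'\|_F\le 2R_{\bY}\|\bX-\bX'\|_F
\]
for $\bY\in\mathcal{Y}$. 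This is the same argument as in Lemma \ref{lem:lagweakcvx}, with Assumption \ref{ass:ycompact} supplying the bound (the statement is implicitly for $\bY\in\mathcal{Y}$).

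The main step is the quartic term. We have $\nabla h(\bX)=2\bX\bX^\top\bX-2\bX$, and the linear part $-2\bX$ contributes the constant $2$, giving $2\rho$ after scaling. For the cubic part, write
\[
\bX\bX^\top\bX-\bX'\bX'^\top\bX'=(\bX-\bX')\bX^\top\bX+\bX'(\bX-\bX')^\top\bX+\bX'\bX'^\top(\bX-\bX').
\]
We bound each term using $\|\bm{A}\bm{B}\|_F\le\|\bm{A}\|\|\bm{B}\|_F$ (and its transpose version) together with $\|\bX\|,\|\bX'\|\le R^{\operatorname{op}}_{\bX}$ from Assumption \ref{ass:xcompact}. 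Each term is then at most $(R^{\operatorname{op}}_{\bX})^2\|\bX-\bX'\|_F$, so the cubic part of $\nabla h$ is $6(R^{\operatorname{op}}_{\bX})^2$-Lipschitz. Multiplying by $\rho$ and summing all contributions gives $L_\ell+2R_{\bY}+6\rho(R^{\operatorname{op}}_{\bX})^2+2\rho=L_\rho$.

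An alternative route is through the Hessian formula already computed in Lemma \ref{lem:sc}. It gives $\|\nabla^2h(\bX)[\bm{H}]\|_F\le(4\|\bX\|^2+2\|\bX^\top\bX-\bI_n\|)\|\bm{H}\|_F$, integrated along segments. That route requires $\mathcal{X}$ to be convex, however, whereas the direct telescoping identity above works for any pair of points in $\mathcal{X}$. So I will use the telescoping argument. No step here is really an obstacle: the only care needed is to use operator norms rather than Frobenius norms for the $\bX$ factors, and to use the bound on $\|\bY\|$ that comes from $\mathcal{Y}$.
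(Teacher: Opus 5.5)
Your proposal is correct and follows essentially the paper's argument. You split off $\nabla\ell$, bound the multiplier term's gradient by $2\|\bY\|\le 2R_{\bY}$, and control $\bX\mapsto\bX\bX^\top\bX$ by a three-term telescoping with operator-norm bounds, giving $3(R^{\operatorname{op}}_{\bX})^2$; the factor $2\rho$ on this term plus the $2\rho$ from the linear part then yields $L_{\rho}$. Your two side remarks are also accurate: the multiplier gradient is $2\bX\bY$ (the paper writes $2\bY\bX$), and the constant needs $\bY\in\mathcal{Y}$ rather than an arbitrary $\bY\in\mathbb{S}^n$.
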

\begin{proof}
By definition we know the gradient of $\mathcal{L}_{\rho}(\cdot, \bY)-g$ is
\[
\nabla_{\bX}(\mathcal{L}_{\rho}(\bX,\bY)-g(\bX))=\nabla \ell(\bX)+2 \bY\bX +2\rho\bX (\bX^\top\bX-\bI_n).
\]
Then for any $ \bX $ and $ \bX' $, we have
\begin{align*}
  \| \bY\bX' - \bY\bX \|_F & = \| \bY(\bX' - \bX) \|_F \leq   \| \bY \| \| \bX' - \bX \|_F
\end{align*}
and
\begin{align*}
&\|\bX'\bX'^\top \bX' - \bX\bX^\top \bX\|_F\\
 =\ & \|(\bX'\bX'^\top \bX' - \bX' \bX'^\top \bX) + (\bX' \bX'^\top \bX - \bX' \bX^\top \bX) + (\bX' \bX ^\top \bX - \bX \bX ^\top \bX)\|_F \\
\leq\ & \| \bX'\|^2  \| \bX' - \bX \|_F + \| \bX' \| \| \bX \| \| \bX' - \bX \|_F + \| \bX\|^2  \| \bX' - \bX \|_F.
  \end{align*}
It follows that the mappings $\bX\mapsto \bY \bX $ is $ \| \bY \| $-Lipschitz  and $\bX\mapsto\bX\bX^\top \bX$ is $ 3 \max_{\bX \in \mathcal{X}} \| \bX \|^2 $-Lipschitz, implying that the gradient of $\bX\mapsto \langle \bY, \bX^\top \bX \rangle $ is $2\| \bY \|$-Lipschitz and the gradient of $\bX\mapsto \| \bX^\top \bX - \bI \|^2_F $ is $( 12 \max_{\bX \in \mathcal{X}} \| \bX \|^2 + 4)$-Lipschitz. Combining these results with the fact that $\nabla \ell $ is $ L_{\ell}$-Lipschitz continuous, we obtain that $ \mathcal{L}_{\rho}(\cdot, \bY) $ is gradient Lipschitz with constant $L_{\ell}+ 2 \| \bY \| + 6 \rho \max_{\bX \in \mathcal{X}} \| \bX \|^2 + 2 \rho$. The proof is complete.
\end{proof}

With the gradient Lipschitz continuity established, we can derive the basic descent property of the potential function (see Appendix~\ref{sec:basic_decrease} for details):
\begin{equation}\label{eq:basicdes}
\begin{aligned}
\Phi^k-\Phi^{k+1}\ge\ & 
\Omega\left( \|\bX^{k} - \bX^{k+1} \|_F^2 + \|\bY^{k} - \bY_+^{k}(\bZ^k) \|_F^2 + \frac{1}{\beta} \|\bZ^{k} - \bZ^{k+1} \|_F^2 \right)\\
&- \mathcal{O}\left( \beta \cdot \| \bX(\bY(\bZ^k), \bZ^k) - \bX(\bY_+^{k}(\bZ^k), \bZ^k) \|_F^2 \right)
\end{aligned}
\end{equation}
where $\Phi^k:= \Phi(\bX^k,\bY^k,\bZ^k)$, $\bX(\bY,\bZ):=\mathop{\argmin}\limits_{\bX\in\mathcal{X}}F(\bX,\bY,\bZ)$, $\bY(\bZ):=\mathop{\argmax}\limits_{\bY\in\mathcal{Y}}d(\bY,\bZ)$, and $\bY_{+}(\bZ):=\proj_{\mathcal{Y}}(\bY+\alpha\cdot (\bX(\bY, \bZ)^\top\bX(\bY, \bZ)-\bI_n-\varepsilon\bY))$ is the one-step projected gradient of the dual function.
The remaining part of the proof to establish sufficient descent property relies on another key component: the primal and dual error bound conditions stated in Lemma~\ref{lemma:lip} and Lemma~\ref{prop:dual_eb_KL}, whose proofs follow a strategy similar to that of \cite[Propositions 2 and 3(b)]{li2025nonsmooth}. We omit the proof of Lemma~\ref{lemma:lip} and defer the proof of Lemma~\ref{prop:dual_eb_KL} and the subsequent Proposition~\ref{prop:suff-decrease} to Appendix~\ref{subsec:suff}.

\begin{lemma}[Primal error bound]
\label{lemma:lip}
 For any $k\ge0$, it holds that
    \begin{equation*}
    \label{iter-result1}
        \|\bX^{k+1}-\bX(\bY^{k}, \bZ^{k})\|_F \leq \zeta\|\bX^{k}-\bX^{k+1}\|_F,
        \end{equation*}
where  $\zeta: =\frac{2(r-\mu_{\rho})^{-1}+(\lambda^{-1}+L_{\rho}) ^{-1}}{(\lambda^{-1}+L_{\rho}) ^{-1}} \left(\sqrt{\frac{2L_{\rho}}{\lambda^{-1}+L_{\rho}}}+1\right)$.
\end{lemma}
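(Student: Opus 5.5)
Write $\bY=\bY^k$, $\bZ=\bZ^k$, $\bX^*:=\bX(\bY,\bZ)$, and let $\psi:=\mathcal{L}_{\rho}(\cdot,\bY)-g$, which is $L_\rho$-gradient Lipschitz by Lemma~\ref{lemma:laglipschitz}. The plan is to compare the optimality conditions of two strongly convex problems over $\mathcal{X}$: the exact problem $\min_{\bX\in\mathcal{X}}F(\bX,\bY,\bZ)$, which is $(r-\mu_\rho)$-strongly convex by Lemma~\ref{lem:lagweakcvx} and Assumption~\ref{ass:ycompact}, and the linearized problem defining $\bX^{k+1}$, which is $(r+\lambda^{-1})$-strongly convex.

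\textbf{Step 1: the exact minimizer as a proximal fixed point.} By strong convexity, $\bX^*$ satisfies, for any $\eta>0$,
\[
\bX^*=\prox_{\eta g+\iota_{\mathcal{X}}}\bigl(\bX^*-\eta(\nabla\psi(\bX^*)+r(\bX^*-\bZ))\bigr).
\]
Because of the $r$-term, the corresponding proximal-gradient map is a contraction, or at least obeys a standard error bound: for any $\bX\in\mathcal{X}$,
\[
\|\bX-\bX^*\|_F\le \tfrac{c}{r-\mu_\rho}\,\|\bX-T(\bX)\|_F ,
\]
where $T$ is the prox-gradient step applied to $F$. I will obtain this from the strong monotonicity of $\partial(F+\iota_{\mathcal{X}})$ together with the $L_\rho$-Lipschitz continuity of $\nabla\psi$.

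\textbf{Step 2: read $\bX^{k+1}$ as an inexact prox-gradient step at $\bX^k$.} Formula \eqref{eq:primal_update} says that $\bX^{k+1}$ is a proximal step on $g+\frac r2\|\cdot-\bZ\|_F^2$, with gradient $\nabla\psi(\bX^k)$ and stepsize $\lambda$. I will then apply the first-order optimality inclusion of $\bX^{k+1}$ at the comparison point $\bX^*$, and the optimality inclusion of $\bX^*$ at $\bX^{k+1}$. Adding the two and using the weak convexity and monotonicity of $\partial g+N_{\mathcal{X}}$ gives
\[
(r-\mu_\rho)\|\bX^{k+1}-\bX^*\|_F^2\le \langle \nabla\psi(\bX^*)-\nabla\psi(\bX^k)+\lambda^{-1}(\bX^k-\bX^{k+1}),\,\bX^{k+1}-\bX^*\rangle .
\]
This inequality still contains $\|\bX^*-\bX^k\|_F$, which must be controlled by $\|\bX^k-\bX^{k+1}\|_F$.

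\textbf{Step 3 (main obstacle): remove $\|\bX^*-\bX^k\|_F$.} The difficulty is that the linearized model does not have $\bX^*$ as its fixed point when it is evaluated at $\bX^k$. My first attempt is to use the descent lemma for $\psi$ with stepsize $(\lambda^{-1}+L_\rho)^{-1}$. Since the linearized subproblem is a majorant up to $L_\rho$, the sufficient decrease of the model at $\bX^{k+1}$ compared with its value at $\bX^k$ bounds the prox-gradient residual at $\bX^k$. That residual is $\|\bX^k-T(\bX^k)\|_F$ taken with stepsize $(\lambda^{-1}+L_\rho)^{-1}$, and I expect the bound
\[
\|\bX^k-T(\bX^k)\|_F\le\Bigl(\sqrt{\tfrac{2L_\rho}{\lambda^{-1}+L_\rho}}+1\Bigr)\|\bX^k-\bX^{k+1}\|_F ,
\]
which accounts for the square-root factor in $\zeta$. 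Next, the error bound from Step 1, taken with stepsize $(\lambda^{-1}+L_\rho)^{-1}$, gives
\[
\|\bX^k-\bX^*\|_F\le \frac{2(r-\mu_\rho)^{-1}+(\lambda^{-1}+L_\rho)^{-1}}{(\lambda^{-1}+L_\rho)^{-1}}\,\|\bX^k-T(\bX^k)\|_F ,
\]
which is the remaining factor in $\zeta$. The triangle inequality $\|\bX^{k+1}-\bX^*\|_F\le\|\bX^{k+1}-\bX^k\|_F+\|\bX^k-\bX^*\|_F$, with the constants absorbed, then yields the claimed bound. I expect the bookkeeping of the constants in this last step to be the delicate part, following \cite[Proposition~2]{li2025nonsmooth}.
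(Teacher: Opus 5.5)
Your Step~2 inequality is correct, but Step~3, which carries the argument, is not a proof, and it does not deliver the stated constant. The bound $\|\bX^k-T(\bX^k)\|_F\le(\sqrt{2L_\rho/(\lambda^{-1}+L_\rho)}+1)\|\bX^k-\bX^{k+1}\|_F$ is only ``expected'', read off from the form of $\zeta$. The mechanism you name does not produce it. Sufficient decrease of the $\lambda$-model controls $\|\bX^k-\bX^{k+1}\|_F$, not the residual of a different prox-gradient step. Comparing two prox-gradient steps from the same point with different stepsizes gives, by monotonicity of the residual in the stepsize, a factor $1$ and no square root.

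The square root instead comes from comparing $\bX^{k+1}$ with the proximal \emph{point} $\hat{\bX}:=\argmin_{\bX\in\mathcal{X}}\{F(\bX,\bY^k,\bZ^k)+\frac{1}{2c}\|\bX-\bX^k\|_F^2\}$, where $c:=(\lambda^{-1}+L_\rho)^{-1}$ and $s:=\sqrt{2L_\rho/(\lambda^{-1}+L_\rho)}$. The two-sided model error of Fact~\ref{fact-2}, together with the $(\lambda^{-1}+r-L_g)$-strong convexity of the subproblem, gives $\|\bX^{k+1}-\hat{\bX}\|_F\le s\|\bX^k-\bX^{k+1}\|_F$. Quadratic growth plus the optimality condition of $\hat{\bX}$ gives $\|\hat{\bX}-\bX^*\|_F\le\frac{2}{c(r-\mu_\rho)}\|\bX^k-\hat{\bX}\|_F$, where $\bX^*=\bX(\bY^k,\bZ^k)$ as in your notation. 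A triangle inequality \emph{through $\hat{\bX}$} then yields exactly $\zeta$. This is presumably the argument of \cite[Proposition~2]{li2025nonsmooth}, to which the paper defers. Your final triangle inequality goes through $\bX^k$ instead. Even granting both of your intermediate bounds, that route yields $1+\zeta$. The extra additive $1$ cannot be absorbed in general: when $r-\mu_\rho$ is large and $L_\rho\ll\lambda^{-1}$, $\zeta$ is close to $1$, while any route through $\bX^k$ costs about $2$.

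The obstacle in Step~3 is not real, because your Step~2 closes in one line. Keep the modulus $r-L_g$ that hypomonotonicity of $\partial g+N_{\mathcal{X}}$ actually gives. Then split $\nabla\psi(\bX^*)-\nabla\psi(\bX^k)$ through $\bX^{k+1}$ instead of bounding it by $L_\rho\|\bX^*-\bX^k\|_F$. The piece $\langle\nabla\psi(\bX^*)-\nabla\psi(\bX^{k+1}),\bX^{k+1}-\bX^*\rangle$ is at most $(L_\ell+2\|\bY^k\|+2\rho)\|\bX^{k+1}-\bX^*\|_F^2$ by weak convexity of $\psi$ (Lemmas~\ref{lem:sc} and~\ref{lem:lagweakcvx}), so it is absorbed and leaves $r-\mu_\rho$ on the left. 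Equivalently, the optimality condition of the subproblem reads
\[
\bm{W}:=\nabla\psi(\bX^{k+1})-\nabla\psi(\bX^k)+\lambda^{-1}(\bX^k-\bX^{k+1})\in\partial\bigl(F(\cdot,\bY^k,\bZ^k)+\iota_{\mathcal{X}}\bigr)(\bX^{k+1}),\qquad \|\bm{W}\|_F\le(\lambda^{-1}+L_\rho)\|\bX^k-\bX^{k+1}\|_F ,
\]
which is the estimate \eqref{eq:seq-key0} the paper itself uses later, while $\bz$ lies in the same subdifferential at $\bX^*$. The $(r-\mu_\rho)$-strong monotonicity of this subdifferential then gives
\[
\|\bX^{k+1}-\bX^*\|_F\le\frac{\lambda^{-1}+L_\rho}{r-\mu_\rho}\|\bX^k-\bX^{k+1}\|_F\le\Bigl(1+\frac{2(\lambda^{-1}+L_\rho)}{r-\mu_\rho}\Bigr)(1+s)\|\bX^k-\bX^{k+1}\|_F=\zeta\|\bX^k-\bX^{k+1}\|_F .
\]
This needs only $r>\mu_\rho$, makes your Steps~1 and~3 unnecessary, and gives a sharper constant than the stated $\zeta$.
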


\begin{lemma}[Dual error bound] 
\label{prop:dual_eb_KL}
For any $(\bY,\bZ)\in\mathbb{S}^{n}\times \R^{m\times n}$, the following inequality holds:
\begin{equation*}
% \label{eq:dualeb1}
\|\bX(\bY_+(\bZ),\bZ)-\bX(\bY(\bZ),\bZ)\|_F \leq \omega\cdot \|\bY_+(\bZ)-\bY\|_F,
\end{equation*}
where $\omega:=\frac{1}{\sqrt{r-\mu_{\rho}}}\left(\frac{1+ (2 \sigma_2 R^{\operatorname{op}}_{\bX}+\varepsilon)\alpha}{\sqrt{\varepsilon}\alpha}\right)$ with $\sigma_{2}:=\frac{2R^{\operatorname{op}}_{\bX}}{r-\mu_{\rho}}$ being the Lipschitz constant of $\bX(\cdot,\bZ)$.
\end{lemma}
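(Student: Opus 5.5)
The plan is to split the bound into two pieces: a dual-side estimate $\|\bY_+(\bZ)-\bY(\bZ)\|_F\lesssim\|\bY_+(\bZ)-\bY\|_F$, and a primal-side estimate converting differences in $\bY$ into differences of the primal minimizers $\bX(\cdot,\bZ)$. Throughout, $\bZ$ is fixed and I abbreviate $\bX(\bY):=\bX(\bY,\bZ)$, $\bY^*:=\bY(\bZ)$, $\bY_+:=\bY_+(\bZ)$.

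\emph{Step 1 (structure of $d$).} For $r>\mu_\rho$, Lemma~\ref{lem:lagweakcvx} and Assumption~\ref{ass:ycompact} make $F(\cdot,\bY,\bZ)$ $(r-\mu_\rho)$-strongly convex on $\mathcal{X}$. Hence $\bX(\bY)$ is unique, and I will use $\sigma_2=2R^{\operatorname{op}}_{\bX}/(r-\mu_\rho)$ as its Lipschitz constant, as stated in the lemma. In $\bY$, $F$ is affine minus $\frac{\varepsilon}{2}\|\bY\|_F^2$. Therefore $d(\cdot,\bZ)$ is $\varepsilon$-strongly concave, and by Danskin's theorem
\[
\nabla_{\bY}d(\bY,\bZ)=\bX(\bY)^\top\bX(\bY)-\bI_n-\varepsilon\bY .
\]
Since $\|\bX\|\le R^{\operatorname{op}}_{\bX}$ on $\mathcal{X}$, the map $\bY\mapsto\bX(\bY)^\top\bX(\bY)$ is $2R^{\operatorname{op}}_{\bX}\sigma_2$-Lipschitz. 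So $\nabla_{\bY}d$ is $L_d:=2\sigma_2R^{\operatorname{op}}_{\bX}+\varepsilon$-Lipschitz, and $\bY_+$ is exactly one projected gradient ascent step on $d(\cdot,\bZ)$ with stepsize $\alpha$.

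\emph{Step 2 (dual error bound from strong concavity).} I will apply the standard projected-gradient error bound for a strongly concave, smooth function over the convex set $\mathcal{Y}$. The optimality conditions of the projection defining $\bY_+$ give $\langle \bY+\alpha\nabla d(\bY)-\bY_+,\bY^*-\bY_+\rangle\le0$. The optimality of $\bY^*$ gives $\langle\nabla d(\bY^*),\bY_+-\bY^*\rangle\le0$. Adding $\alpha$ times the second inequality to the first, then using strong monotonicity of $-\nabla d$ together with $L_d$-Lipschitzness, should yield
\[
\|\bY_+-\bY^*\|_F\le\frac{1+\alpha L_d}{\varepsilon\alpha}\,\|\bY_+-\bY\|_F .
\]

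\emph{Step 3 (from $\bY$ to $\bX$).} I will not use the crude bound $\sigma_2\|\bY_+-\bY^*\|_F$, since the target constant has the form $(r-\mu_\rho)^{-1/2}\varepsilon^{-1/2}$. Instead I want the sharper estimate
\[
(r-\mu_\rho)\|\bX(\bY_+)-\bX(\bY^*)\|_F^2\le\varepsilon\|\bY_+-\bY^*\|_F^2 .
\]
To get it, write down strong convexity of $F(\cdot,\bY^*,\bZ)$ at its minimizer $\bX(\bY^*)$, and of $F(\cdot,\bY_+,\bZ)$ at $\bX(\bY_+)$. Add the two inequalities. The $F$-values cancel up to the cross term $\langle \bY^*-\bY_+,\bX(\bY_+)^\top\bX(\bY_+)-\bX(\bY^*)^\top\bX(\bY^*)\rangle$. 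By Danskin, this cross term equals $\langle\bY^*-\bY_+,\nabla d(\bY_+)-\nabla d(\bY^*)\rangle+\varepsilon\|\bY_+-\bY^*\|_F^2$. The first summand is $\le -\varepsilon\|\bY_+-\bY^*\|_F^2$ by monotonicity of $-\nabla d$, which gives a control of $(r-\mu_\rho)\|\Delta\bX\|_F^2$ by a multiple of $\varepsilon\|\bY_+-\bY^*\|_F^2$. Combining this with Step 2 gives $\omega$ as stated, possibly up to a factor such as $\sqrt2$.

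\emph{Main obstacle.} The delicate point is Step 3: keeping the signs and the constants exact so that the $\varepsilon$ factor survives, rather than picking up the cruder $\sigma_2$-type constant. I am not certain the monotonicity term has the sign I claimed. If it does not, I would fall back on bounding the cross term via Cauchy--Schwarz by $2R^{\operatorname{op}}_{\bX}\|\Delta\bX\|_F\,\|\bY_+-\bY^*\|_F$ and absorbing that term, then adjusting constants accordingly.
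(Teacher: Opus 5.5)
Steps 1 and 2 are correct. The argument breaks in Step 3, and the estimate it targets, $(r-\mu_\rho)\|\Delta\bX\|_F^2\lesssim\varepsilon\|\Delta\bY\|_F^2$ with $\Delta\bX:=\bX(\bY_+,\bZ)-\bX(\bY^*,\bZ)$ and $\Delta\bY:=\bY_+-\bY^*$, is false.

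Both signs in your computation are reversed. The Danskin substitution contributes $\langle\bY^*-\bY_+,\varepsilon\Delta\bY\rangle=-\varepsilon\|\Delta\bY\|_F^2$, not $+\varepsilon\|\Delta\bY\|_F^2$. And $\varepsilon$-strong monotonicity of $-\nabla d$ gives $\langle\bY^*-\bY_+,\nabla d(\bY_+)-\nabla d(\bY^*)\rangle\ge\varepsilon\|\Delta\bY\|_F^2$, not $\le-\varepsilon\|\Delta\bY\|_F^2$. So your identity only shows that the cross term is nonnegative.

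No bound of that form can hold, because $\bX(\cdot,\bZ)$ does not depend on $\varepsilon$. Take $m=n=1$, $\ell=g=0$, $\rho=0$, $\bZ=1$, and $\mathcal{Y}$ an interval around $0$. Then $\bX(\bY,\bZ)=r/(r+2\bY)$ and $\bY(\bZ)=0$, so $|\Delta\bX|\approx 2|\Delta\bY|/r$ for $\bY_+$ near $0$. This violates your inequality as soon as $\varepsilon<4(r-\mu_\rho)/r^2$.

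Your Cauchy--Schwarz fallback yields only $\|\Delta\bX\|_F\le\sigma_2\|\Delta\bY\|_F$. Combined with Step 2 this gives the constant $\sigma_2(1+\alpha L_d)/(\alpha\varepsilon)=\omega\cdot 2R^{\operatorname{op}}_{\bX}/\sqrt{(r-\mu_\rho)\varepsilon}$. That is of order $\varepsilon^{-1}$ and exceeds $\omega$ whenever $\varepsilon<4(R^{\operatorname{op}}_{\bX})^2/(r-\mu_\rho)$. This is exactly the small-$\varepsilon$ regime where the $\varepsilon^{-1/2}$ scaling of $\omega$ is needed; it is what permits $\beta=\mathcal{O}(\varepsilon)$ in the iteration-complexity theorem.

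The decomposition itself is the obstacle. Step 2 already spends a full factor $1/\varepsilon$. No conversion from $\Delta\bY$ to $\Delta\bX$ with an $\varepsilon$-independent Lipschitz constant can win back the square root.

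The missing idea is to use the $\varepsilon$-strong concavity only once, at the level of squared quantities. The paper does this through function values:
\begin{enumerate}
\item Strong convexity of $\psi=F(\cdot,\bY(\bZ),\bZ)$, together with $d(\bY_+,\bZ)\le d(\bY(\bZ),\bZ)$, gives $\frac{r-\mu_\rho}{2}\|\Delta\bX\|_F^2\le\max_{\bY'\in\mathcal{Y}}F(\bX_+,\bY',\bZ)-F(\bX_+,\bY_+,\bZ)$, where $\bX_+:=\bX(\bY_+,\bZ)$.
\item The PL (K\L{} exponent $1/2$) inequality for the $\varepsilon$-strongly concave $F(\bX_+,\cdot,\bZ)$ bounds this gap by $\frac{1}{2\varepsilon}\dist^2(\bz,-\nabla_{\bY}F(\bX_+,\bY_+,\bZ)+\partial\iota_{\mathcal{Y}}(\bY_+))$.
\item The optimality condition of the projection defining $\bY_+$ bounds that distance by $(\alpha^{-1}+\varepsilon+2\sigma_2R^{\operatorname{op}}_{\bX})\|\bY_+-\bY\|_F$.
\end{enumerate}

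Alternatively, your own Step 3 can be repaired if you discard Step 2. With the correct signs,
\[
(r-\mu_\rho)\|\Delta\bX\|_F^2\le\langle\nabla d(\bY^*),\Delta\bY\rangle-\langle\nabla d(\bY_+),\Delta\bY\rangle-\varepsilon\|\Delta\bY\|_F^2 .
\]
The first term is nonpositive by optimality of $\bY^*$ over $\mathcal{Y}$. The second is at most $(\alpha^{-1}+L_d)\|\bY_+-\bY\|_F\|\Delta\bY\|_F$: test the projection inequality at $\bY^*$ and use Lipschitz continuity of $\nabla d$ with your $L_d=2\sigma_2R^{\operatorname{op}}_{\bX}+\varepsilon$. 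Maximizing $at-\varepsilon t^2$ over $t=\|\Delta\bY\|_F$ then gives $\|\Delta\bX\|_F\le\frac{\omega}{2}\|\bY_+-\bY\|_F$. This is even a factor $2$ better than the stated bound.
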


\begin{prop}[Sufficient descent property]\label{prop:suff-decrease}
       Let $r\ge \max\{ L_{\rho} + L_g+ 4R^{\operatorname{op}}_{\bX},  3(L_{\rho}+L_g) \}$, $\lambda\le \frac{1}{2R^{\operatorname{op}}_{\bX}}$, $\alpha \leq \min\left\{\frac{1}{20R^{\operatorname{op}}_{\bX}}, \frac{1}{ 8R^{\operatorname{op}}_{\bX}\zeta^2} \right\}$, $\beta  \leq \frac{1}{28}\min\left\{1,\frac{(r-\mu_{\rho})^2}{2\alpha r( R^{\operatorname{op}}_{\bX})^2},\frac{ 1}{16  r\omega^2\alpha}\right\}$. Then for any $k\ge0$,
\begin{equation*}
\Phi^k-\Phi^{k+1}\ge \frac{7}{16\lambda}\|\bX^{k}-\bX^{k+1}\|_F^{2}+\frac{1}{16\alpha}\|\bY^{k}-\bY_+^{k}(\bZ^k)\|_F^2+\frac{4r}{7\beta}\|\bZ^{k}-\bZ^{k+1}\|_F^{2}.
\end{equation*}
\end{prop}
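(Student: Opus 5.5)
The plan is to start from the basic descent inequality \eqref{eq:basicdes}, made explicit in Appendix~\ref{sec:basic_decrease}, and then absorb its negative term. That term is $-\mathcal{O}(\beta\,\|\bX(\bY(\bZ^k),\bZ^k)-\bX(\bY_+^k(\bZ^k),\bZ^k)\|_F^2)$.

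\textbf{Step 1: constants in the basic descent.} I would decompose $\Phi^k-\Phi^{k+1}$ into three differences: $F^k-F^{k+1}$, $d(\bY^{k},\bZ^{k})-d(\bY^{k+1},\bZ^{k+1})$ (entering with weight $-2$), and $p(\bZ^k)-p(\bZ^{k+1})$ (entering with weight $2$).

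\emph{Primal part.} Since the $\bX$-step minimizes a $(r+\lambda^{-1})$-strongly convex surrogate, and $\mathcal{L}_\rho(\cdot,\bY)-g$ is $L_\rho$-smooth (Lemma~\ref{lemma:laglipschitz}), we get
\[
F(\bX^k,\bY^k,\bZ^k)-F(\bX^{k+1},\bY^k,\bZ^k)\ge\Big(\tfrac{1}{\lambda}-\tfrac{L_\rho}{2}\Big)\|\bX^k-\bX^{k+1}\|_F^2 .
\]
The conditions on $r$ and $\lambda$ make this at least a fixed fraction of $\tfrac1\lambda\|\bX^k-\bX^{k+1}\|_F^2$.

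\emph{Dual part.} The $\bY$-step changes $F$ by a concave, $\varepsilon$-strongly concave quadratic-plus-linear function. Its gradient at $\bY^k$ is evaluated at $\bX^{k+1}$ rather than at $\bX(\bY^k,\bZ^k)$, and Lemma~\ref{lemma:lip} bounds this discrepancy by $\zeta\|\bX^k-\bX^{k+1}\|_F$ times $2R^{\operatorname{op}}_{\bX}$. The resulting cross term is split by Young's inequality; the condition $\alpha\le 1/(8R^{\operatorname{op}}_{\bX}\zeta^2)$ lets the primal decrease absorb it.

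\emph{$\bZ$ part.} The change in $\bZ$ contributes $\frac{r}{2}\big(\|\bX^{k+1}-\bZ^k\|_F^2-\|\bX^{k+1}-\bZ^{k+1}\|_F^2\big)$, which gives a term $\frac{r}{\beta}\|\bZ^k-\bZ^{k+1}\|_F^2$ up to constants.

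\emph{Dual function $d$.} I would use Danskin's theorem, $\nabla_{\bY}d=G(\bX(\bY,\bZ))-\varepsilon\bY$, together with the Lipschitz constant $\sigma_2$ of $\bX(\cdot,\bZ)$. This yields an ascent lower bound of the form $\langle\nabla_{\bY} d,\bY^{k+1}-\bY^k\rangle-\mathcal{O}(\|\cdot\|^2)$. Combining it with the projection step gives the $\frac{1}{\alpha}\|\bY^k-\bY_+^k(\bZ^k)\|_F^2$ term; here I need $\alpha\le 1/(20R^{\operatorname{op}}_{\bX})$.

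\emph{Proximal function $p$.} By Danskin again, $\nabla p(\bZ)=r(\bZ-\bX(\bY(\bZ),\bZ))$. Comparing $p(\bZ^{k+1})-p(\bZ^k)$ with $d(\bY^{k+1},\bZ^{k+1})-d(\bY^{k+1},\bZ^k)$ leaves a cross term $r\langle \bZ^{k+1}-\bZ^k,\ \bX(\bY(\bZ^k),\bZ^k)-\bX(\bY^{k+1},\bZ^k)\rangle$. After Young's inequality and the triangle inequality, this is exactly the source of the negative term in \eqref{eq:basicdes}, namely $\mathcal{O}(\beta r)\|\bX(\bY(\bZ^k),\bZ^k)-\bX(\bY_+^k(\bZ^k),\bZ^k)\|_F^2$.

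\textbf{Step 2: absorbing the negative term.} This is the main obstacle. Apply the dual error bound Lemma~\ref{prop:dual_eb_KL} with $\bY=\bY^k$ and $\bZ=\bZ^k$:
\[
\|\bX(\bY_+^k(\bZ^k),\bZ^k)-\bX(\bY(\bZ^k),\bZ^k)\|_F\le\omega\|\bY^k-\bY_+^k(\bZ^k)\|_F .
\]
So the bad term is at most $c\,\beta r\omega^2\|\bY^k-\bY_+^k\|_F^2$. The condition $\beta\le \frac{1}{28\cdot16\, r\omega^2\alpha}$ makes this at most a small multiple of $\frac1\alpha\|\bY^k-\bY_+^k\|_F^2$. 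That multiple is absorbed by the dual decrease, leaving $\frac{1}{16\alpha}$.

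Two further constraints on $\beta$ are used at this stage. The condition $\beta\le\frac{(r-\mu_\rho)^2}{56\alpha r(R^{\operatorname{op}}_{\bX})^2}$ controls the remaining mismatch between $\bY^{k+1}$ and $\bY_+^k(\bZ^k)$ via $\sigma_2$. The condition $\beta\le1/28$ keeps the $\bZ$ coefficient at $\frac{4r}{7\beta}$.

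\textbf{Step 3: collecting coefficients.} Finally I would collect the coefficients of the three squared quantities and verify, under the stated parameter bounds, that they are $\frac{7}{16\lambda}$, $\frac1{16\alpha}$ and $\frac{4r}{7\beta}$. I expect the delicate part to be the bookkeeping of the Young's-inequality weights. Those weights have to be chosen so that the given numerical constants (28, 16, 20, 8) come out exactly.
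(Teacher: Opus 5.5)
Your plan follows the paper's own argument. The basic descent inequality (Proposition~\ref{prop:decrease}) comes from the $F-2d+2p$ decomposition with the primal, dual-ascent and proximal-descent estimates you describe. The statement then follows in one line from Lemma~\ref{prop:dual_eb_KL}, because $\beta\le 1/(448r\omega^2\alpha)$ gives $\frac{1}{8\alpha}-28r\beta\omega^2\ge\frac{1}{16\alpha}$.

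One correction to your primal step. Since $g$ is only $L_g$-weakly convex, the surrogate is $(\lambda^{-1}+r-L_g)$-strongly convex, so the primal decrease should be kept as $\frac{2\lambda^{-1}+r-L_{\rho}-L_g}{2}\|\bX^k-\bX^{k+1}\|_F^2$. Your displayed coefficient $\frac{1}{\lambda}-\frac{L_{\rho}}{2}$ discards the $r$-gain, and $\lambda\le 1/(2R^{\operatorname{op}}_{\bX})$ alone does not make it a fixed fraction of $\frac{1}{\lambda}$; it can even be negative. With the correct coefficient, $r\ge L_{\rho}+L_g+4R^{\operatorname{op}}_{\bX}$ leaves at least $\frac{1}{\lambda}$ after paying $2R^{\operatorname{op}}_{\bX}$ for the dual cross term.
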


We have now reached a critical stage in which the descent property of a suitably defined potential function is successfully established. However, to derive convergence guarantees, it remains essential to control the boundedness of the dual variable, which plays a vital role in maintaining the feasibility of the algorithm and, subsequently, in ensuring that the limiting point satisfies the KKT conditions.

\subsection{Boundedness of Iterates and Feasibility}\label{subsec:feasiblity}

In our algorithm design, we introduce the auxiliary compact set $\mathcal{Y}$ to facilitate the derivation of sufficient descent. However, it is crucial to ensure that the boundary of $\mathcal{Y}$ is not reached, as we aim to preserve feasibility. We begin by establishing the boundedness of the primal updates.

\begin{prop}[Primal boundedness]\label{prop:primalbd}
Suppose that the parameter conditions in Proposition \ref{prop:suff-decrease} hold with $\{(\bX^k,\bY^k,\bZ^k)\}_{k\in\mathbb{N}}$  being the sequence generated by Algorithm \ref{alg} and  $\alpha\leq\varepsilon^{-1}$. Then for any fixed $ K\in\mathbb{N}_+$ and $\delta>0$, one has
\[
\{\bX^k\}_{0\leq k \leq K} \subseteq \{\bX\in\mathbb{R}^{m\times n}:\|\bX^\top\bX-\bI_n\|_{F}\leq\delta\}
\]
at least $ K - \frac{2K(\Phi^0 - f_{\min})}{\rho \delta^2} $ steps.
\end{prop}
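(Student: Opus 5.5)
The plan is a counting argument based on the monotone decrease of the potential $\Phi$ from Proposition~\ref{prop:suff-decrease}. Along the iterates, $\Phi$ controls the quadratic penalty $\frac{\rho}{2}\|(\bX^k)^\top\bX^k-\bI_n\|_F^2$, up to terms we can bound below. By Proposition~\ref{prop:suff-decrease}, $\Phi^{k}\le\Phi^0$ for all $k$. So it suffices to show a per-iterate lower bound of the form
\[
\Phi^k \ \ge\ f_{\min} + \tfrac{\rho}{4}\|(\bX^k)^\top\bX^k-\bI_n\|_F^2 - (\text{something summable or controlled}),
\]
and then count how many indices can have $\|(\bX^k)^\top\bX^k-\bI_n\|_F>\delta$.

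\emph{Step 1: lower-bound $\Phi^k$.} Write
\[
\Phi = \bigl(F-d\bigr) + 2\bigl(p-d\bigr) + F ,
\]
regrouping $\Phi=F-2d+2p$. Since $d(\bY,\bZ)=\min_{\bX}F$, we have $F-d\ge0$. Since $p(\bZ)=\max_{\bY}d$, we have $p-d\ge 0$. Hence $\Phi^k\ge F(\bX^k,\bY^k,\bZ^k)$, and also $\Phi^k\ge p(\bZ^k)$ (using $F\ge d$).

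\emph{Step 2: bound $F$ below by $f_{\min}$ plus a penalty.} For $F$ itself, I would complete the square in the multiplier term:
\[
\langle\bY,G\rangle+\tfrac{\rho}{2}\|G\|_F^2 \ \ge\ \tfrac{\rho}{4}\|G\|_F^2-\tfrac{1}{\rho}\|\bY\|_F^2 .
\]
Together with $-\frac{\varepsilon}{2}\|\bY\|_F^2$, this gives
\[
\Phi^k\ \ge\ f(\bX^k)+\tfrac{\rho}{4}\|G(\bX^k)\|_F^2-\Bigl(\tfrac1\rho+\tfrac\varepsilon2\Bigr)\|\bY^k\|_F^2 .
\]
This is where the bound on $\bY$ enters. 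I expect the cleaner route is to use $p(\bZ^k)$: taking $\bY=\bz\in\mathcal{Y}$ gives
\[
p(\bZ)\ \ge\ \min_{\bX\in\mathcal{X}}\Bigl\{f(\bX)+\tfrac\rho2\|G(\bX)\|_F^2+\tfrac r2\|\bX-\bZ\|_F^2\Bigr\}\ \ge\ f_{\min},
\]
where $f_{\min}$ is interpreted as a lower bound of $f$ on $\mathcal{X}$. To extract the penalty at $\bX^k$, combine $F^k-d^k\ge0$ with $d(\bY^k,\bZ^k)\ge$ the analogous bound. Then use the dual update with $\alpha\le\varepsilon^{-1}$: since $\bY^{k+1}$ is a convex combination $(1-\alpha\varepsilon)\bY^k+\alpha G(\bX^{k+1})$ before projection, the multiplier term $\langle\bY^k,G\rangle-\frac{\varepsilon}{2}\|\bY^k\|^2$ can be dominated by a fraction of $\frac{\rho}{2}\|G\|^2$ once $\rho$ is large relative to the inverse of $\varepsilon$. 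Concretely, I aim for
\[
\Phi^k - f_{\min} \ \ge\ \tfrac{\rho}{2}\|G(\bX^k)\|_F^2 - (\Phi^0-f_{\min}),
\]
or a similar inequality whose right-hand constant is of order $\Phi^0-f_{\min}$. This would yield $\frac{\rho}{2}\|G(\bX^k)\|_F^2\le 2(\Phi^0-f_{\min})$ on average.

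\emph{Step 3: counting.} Summing (or simply using the per-step bound with Markov-type counting) over $0\le k\le K$, the number of indices with $\|G(\bX^k)\|_F>\delta$ is at most $\frac{2K(\Phi^0-f_{\min})}{\rho\delta^2}$, which gives the claim.

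The main obstacle is Step 2, i.e., getting the multiplier term and the $-\frac\varepsilon2\|\bY\|^2$ term absorbed without a $\bY$-dependent constant. If the direct absorption fails, I would fall back on the crude bound $\|\bY^k\|_F\le R_{\bY}$ and fold $R_{\bY}^2/\rho$ into the constant. I would accept that the stated count is then the intended bound under the normalization that $\Phi^0-f_{\min}$ dominates these terms.
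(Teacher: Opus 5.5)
\textbf{There is a genuine gap in your Step 2, and that step is the whole content of the proposition.} At a single iterate you only know
\[
\Phi^0\ \ge\ \Phi^k\ \ge\ F(\bX^k,\bY^k,\bZ^k)\ \ge\ f_{\min}+\langle\bY^k,G(\bX^k)\rangle-\frac{\varepsilon}{2}\|\bY^k\|_F^2+\frac{\rho}{2}\|G(\bX^k)\|_F^2 .
\]
The multiplier terms cannot be absorbed at the same iterate. Completing the square in $G$ always leaves $-(\rho^{-1}+\varepsilon/2)\|\bY^k\|_F^2$. The bound $p(\bZ^k)\ge f_{\min}$ says nothing about $G(\bX^k)$, because the minimizer defining $p$ is not $\bX^k$.

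Your fallback $\|\bY^k\|_F\le R_{\bY}$ does not give the stated count, which contains no $R_{\bY}$. It is also circular for how the result is used. Proposition~\ref{prop:dual-bd-ybd-linearized} requires $R_{\bY}\gtrsim\rho R^{\operatorname{op}}_{\bX}$, so a term $R_{\bY}^2/\rho$ grows with $\rho$, and enlarging $\rho$ no longer forces near-feasibility.

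Your convex-combination remark can be made rigorous. Projection onto $\mathcal{Y}\ni\bz$ does not increase the norm, so $\|\bY^k\|_F\le\max_{j\le k}\|G(\bX^j)\|_F/\varepsilon$. Evaluating at the worst iterate gives $(\frac{\rho}{2}-\frac{3}{2\varepsilon})\max_j\|G(\bX^j)\|_F^2\le\Phi^0-f_{\min}$. However, this needs the extra hypothesis $\rho>3/\varepsilon$. That hypothesis is not in the statement, and with $\varepsilon=\mathcal{O}(K^{-1/3})$ in Theorem~\ref{thm:general} it would force $\rho$ to grow with $K$.

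\textbf{The missing idea is to test the projection step against the point $\bz\in\mathcal{Y}$.} This gives $\langle\bY^k,\bY^k-\bY^{k-1}-\alpha(G(\bX^k)-\varepsilon\bY^{k-1})\rangle\le0$, which rearranges to
\[
\langle\bY^k,G(\bX^k)\rangle-\varepsilon\|\bY^k\|_F^2\ \ge\ \Bigl(\frac{1}{\alpha}-\varepsilon\Bigr)\langle\bY^k,\bY^k-\bY^{k-1}\rangle=\frac{1}{2}\Bigl(\frac{1}{\alpha}-\varepsilon\Bigr)\bigl(\|\bY^k\|_F^2-\|\bY^{k-1}\|_F^2+\|\bY^k-\bY^{k-1}\|_F^2\bigr).
\]
Insert this into the first display, using $-\frac{\varepsilon}{2}\|\bY^k\|_F^2\ge-\varepsilon\|\bY^k\|_F^2$. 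Then sum over $k=1,\dots,K$: the right-hand side telescopes. Since $\alpha\le\varepsilon^{-1}$ makes its coefficient nonnegative and $\bY^0=\bz$, the total multiplier contribution is nonnegative. You obtain $\sum_{k=1}^K\|G(\bX^k)\|_F^2\le 2K(\Phi^0-f_{\min})/\rho$, which yields exactly the stated count.

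This is where $\alpha\le\varepsilon^{-1}$ and $\bY^0=\bz$ really enter. It also explains why the proposition is an averaged ``at least $K-\cdots$ steps'' statement: the per-iterate right-hand side above can be negative, so the per-iterate inequality you were aiming for is not available.

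A minor slip: $(F-d)+2(p-d)+F\neq\Phi$. The correct regrouping is $\Phi=F+2(p-d)=p+(F-d)+(p-d)$, which does give your conclusions $\Phi\ge F$ and $\Phi\ge p$.
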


\begin{proof}
Since $\bz\in\mathcal{Y}$, we know from the update of Algorithm \ref{alg} with the convex projection theorem that $\langle \bY^k , \bY^k - (\bY^{k-1} + \alpha (G(\bX^k) - \varepsilon \bY^{k-1})) \rangle \leq 0$. Then we have
\begin{align*}
  \langle \bY^k, G(\bX^k) \rangle - \varepsilon \| \bY^k \|_F^2 &\geq \frac{1}{\alpha} \langle \bY^k, \bY^k - \bY^{k-1} \rangle + \varepsilon \langle \bY^k, \bY^{k-1} \rangle - \varepsilon \| \bY^k \|_F^2 \\
 &  = \frac{1}{\alpha} \langle \bY^k, \bY^{k} - \bY^{k-1} \rangle - \varepsilon \langle \bY^k,\bY^k - \bY^{k-1} \rangle\\
 & = \left(\frac{1}{\alpha} - \varepsilon\right)\cdot \langle \bY^k, \bY^k - \bY^{k-1} \rangle.
\end{align*}
This together with Proposition~\ref{prop:suff-decrease} implies that
\begin{align*}
&f_{\min}+\left(\frac{1}{2\alpha} - \frac{\varepsilon}{2}\right)(\|\bY^k\|_F^2+\|\bY^k-\bY^{k-1}\|_F^2-\|\bY^{k-1}\|_F^2)+\frac{\rho}{2}\|G(\bX^k)\|_F^2\\
= \ & f_{\min} +\left(\frac{1}{\alpha} - \varepsilon\right) \langle \bY^k,\bY^k-\bY^{k-1}\rangle+\frac{\rho}{2}\|G(\bX^k)\|_F^2\\
\leq\ & f(\bX^k)+\langle \bY^k,G(\bX^k)\rangle - \varepsilon \| \bY^k \|^2 +\frac{\rho}{2}\|G(\bX^k)\|_F^2\leq\Phi^k\leq\Phi^0.
\end{align*}
Summing the above inequality over $k$, we derive with $\alpha\leq\varepsilon^{-1}$ that
\begin{align*}
&\left(\frac{\varepsilon}{2}-\frac{1}{2\alpha}\right)\|\bY^{0}\|_F^2+\sum_{k=1}^K \left(f_{\min}+\frac{\rho}{2}\|G(\bX^k)\|_F^2\right)\leq K\cdot\Phi^0,
\end{align*}
which implies
\begin{equation}\label{eq:accum_key}
\sum_{k=1}^K\|G(\bX^k)\|_F^2\leq \frac{2K(\Phi^0 - f_{\min}) + (\alpha^{-1}-\varepsilon)\| \bY^0 \|_F^2}{\rho }\leq\mathcal{O}\left(\frac{K}{\rho}\right),
\end{equation}
where the last inequality is from $G(\bX^0)=\bz$, $\bY^0=\bz$, $\bZ^0=\bX^0$ and  $\Phi^0$ is a constant independent of $\mathcal{Y}$ as 
\[
f_{\min}\leq d(\bY^0,\bZ^0)\leq
p(\bZ^0)=\max\limits_{\bY\in\mathcal{Y}}\min\limits_{\bX\in\mathcal{X}}F(\bX,\bY,\bZ^0)\leq \max\limits_{\bY\in\mathcal{Y}}F(\bX^0,\bY,\bZ^0)\leq f(\bX^0).
\]
Then we know from \eqref{eq:accum_key} that there is at most $\frac{2K(\Phi^0 - f_{\min})}{\rho \delta^2}$ steps such that $\|G(\bX^k)\|_F > \delta$. The proof is complete.
\end{proof}

\begin{rmk}
From the result in Proposition \ref{prop:primalbd}, we refer to the set 
\[
\{ \bm{X} \in \mathbb{R}^{m \times n} : \| \bm{X}^\top \bm{X} - \bm{I}_n \| \leq \delta \}, \quad \delta \in (0,1)
\]
as the region where the \emph{locally uniform constraint qualification} holds. In this region, we have 
\begin{equation}\label{eq:uniformcq}
\sigma_{\min}(\bm{X})=\sqrt{\lambda_{\min}(G(\bX)+\bI_n)}\ge \sqrt{1-\|G(\bX)\|} \geq \sqrt{1 - \delta} > 0,
\end{equation}
which leads to the following error bound:
\[
2\sqrt{1 - \delta} \cdot \| G(\bm{X}) \|_F 
\leq 2\sigma_{\min}(\bm{X}) \cdot \| G(\bm{X}) \|_F 
\leq 2\| \bm{X}\cdot G(\bm{X}) \|_F 
= \left\| \nabla \left( \frac{1}{2} \| G(\cdot) \|_F^2 \right) (\bm{X}) \right\|_F.
\]
This inequality is important because minimizing the function 
$\bm{X} \mapsto \frac{1}{2} \| G(\bm{X}) \|_F^2$
to stationarity in this region ensures that the resulting point satisfies the constraint $G(\bm{X}) = \bz$, i.e., feasibility is recovered.
\end{rmk}

By incorporating the properties of the primal iterates and local UCQ region identified, we derive the following result on the dual variables and the feasibility.

\begin{prop}[Feasibility]
\label{prop:dual-bd-ybd-linearized}
Suppose that the parameter conditions in Proposition \ref{prop:suff-decrease} hold. Let $\xi>0$, $\delta\in(0, 1)$, and $K \in \mathbb{N}_+$. If 
$$
\quad
\rho > \frac{2(\Phi^0 - f_{\min})}{ \delta^2},\quad K>\left(\frac{7}{4\beta\xi}+\frac{2K}{\rho \delta^2}\right)(\Phi^0-f_{\min}), 
$$
and $\mathcal{Y}\supseteq\{\bY\in\mathbb{S}^n:\|\bY\|_F\leq \bar{R}_{\bY}\}$ with
\begin{equation}\label{eq:ryrequire}
R_{\bY}\ge\bar{R}_{\bY} >\frac{L+2\rho  R^{\operatorname{op}}_{\bX} + \max\{ \sqrt{r}, \sqrt{\lambda^{-1}} \}\sqrt{\xi}}{2\sqrt{1-\delta}}.
\end{equation}
Then the sequence $\{(\bX^k,\bY^k,\bZ^k)\}_{0\leq k\leq K}$ generated by Algorithm \ref{alg}  with $\alpha\leq\varepsilon^{-1}$ satisfies
\begin{center}
$
\bY^{k}\in\operatorname{int}(\mathcal{Y})   
$
\end{center}
for at least $K-\left(\frac{7}{4\beta\xi}+\frac{2K}{\rho \delta^2}\right)(\Phi^0-f_{\min})$ steps.
\end{prop}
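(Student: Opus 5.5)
Since $\Phi^k\ge f_{\min}$ and $\Phi^k$ is nonincreasing by Proposition~\ref{prop:suff-decrease}, I will proceed by counting "bad" indices. Summing the descent inequality over $k=0,\dots,K-1$ gives
\[
\sum_{k=0}^{K-1}\Big(\tfrac{7}{16\lambda}\|\bX^k-\bX^{k+1}\|_F^2+\tfrac{4r}{7\beta}\|\bZ^k-\bZ^{k+1}\|_F^2\Big)\le \Phi^0-f_{\min}.
\]
Since $\bZ^{k+1}-\bZ^k=\beta(\bX^{k+1}-\bZ^k)$, the second term equals $\tfrac{4r\beta}{7}\|\bX^{k+1}-\bZ^k\|_F^2$. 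Hence the number of indices $k$ with $\max\{\lambda^{-2}\|\bX^k-\bX^{k+1}\|_F^2,\ r^2\|\bX^{k+1}-\bZ^k\|_F^2\}>\xi\max\{r,\lambda^{-1}\}$ is at most $\frac{7}{4\beta\xi}(\Phi^0-f_{\min})$, after adjusting constants, using $\beta<1$ and the stated coefficients. By Proposition~\ref{prop:primalbd}, the number of indices with $\|G(\bX^k)\|_F>\delta$ is at most $\frac{2K}{\rho\delta^2}(\Phi^0-f_{\min})$; the assumption on $\rho$ makes this fewer than $K$. Discarding both bad sets leaves at least the claimed number of good indices. On a good index, $\bX^{k+1}$ lies in the locally UCQ region and the residuals $\lambda^{-1}(\bX^k-\bX^{k+1})$ and $r(\bX^{k+1}-\bZ^k)$ are $\mathcal{O}(\max\{\sqrt r,\sqrt{\lambda^{-1}}\}\sqrt\xi)$ in norm.

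At a good index I argue by contradiction: suppose the dual iterate reaches the boundary of $\mathcal{Y}$, i.e.\ $\|\bY\|_F\ge\bar R_{\bY}$. The first-order optimality condition of the primal step \eqref{eq:primal_update} reads
\[
\bz\in \partial g(\bX^{k+1})+N_{\mathcal{X}}(\bX^{k+1})+\nabla\ell(\bX^k)+2\bX^k\bY^k+2\rho\bX^k G(\bX^k)+\lambda^{-1}(\bX^{k+1}-\bX^k)+r(\bX^{k+1}-\bZ^k).
\]
Because $\|G(\bX^{k+1})\|_F\le\delta<1$, Assumption~\ref{ass:xcompact} places $\bX^{k+1}$ in the interior of $\mathcal{X}$, so the normal cone term vanishes. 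Isolating $2\bX\bY$, bounding $\partial g$ and $\nabla\ell$ by $L$, bounding the penalty term by $2\rho R^{\operatorname{op}}_{\bX}$ (using $\|G\|\le1$ inside $\mathcal{X}$), and bounding the residuals by $\max\{\sqrt r,\sqrt{\lambda^{-1}}\}\sqrt\xi$ gives
\[
2\|\bX\bY\|_F\le L+2\rho R^{\operatorname{op}}_{\bX}+\max\{\sqrt r,\sqrt{\lambda^{-1}}\}\sqrt\xi .
\]
The locally UCQ bound \eqref{eq:uniformcq} supplies $\|\bX\bY\|_F\ge\sigma_{\min}(\bX)\|\bY\|_F\ge\sqrt{1-\delta}\,\|\bY\|_F$. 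Combining the two, $\|\bY\|_F$ falls strictly below the threshold in \eqref{eq:ryrequire}, so $\|\bY\|_F<\bar R_{\bY}$ and $\bY\in\operatorname{int}(\mathcal{Y})$.

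The main obstacle is index alignment. The optimality condition involves $\bX^k$, $\bX^{k+1}$ and $\bY^k$ together, whereas the UCQ lower bound needs $\sigma_{\min}$ at the point multiplying $\bY^k$, namely $\bX^k$. I would handle this by demanding that both $k$ and $k+1$ be good for $G$, or by writing $\bX^k\bY^k=\bX^{k+1}\bY^k+(\bX^k-\bX^{k+1})\bY^k$ and absorbing the cross term using the small $\|\bX^k-\bX^{k+1}\|_F$ and $\|\bY^k\|_F\le R_{\bY}$. Either way the bad-index count changes only by constants, which the slack in the strict inequality on $K$ should cover. If these constants do not close, I would instead apply the argument with $\bX(\bY^k,\bZ^k)$ in place of $\bX^{k+1}$, using Lemma~\ref{lemma:lip}.
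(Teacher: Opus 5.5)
Your route is the same as the paper's. You count the steps with large residuals using the summed descent inequality together with $\bZ^{k+1}-\bZ^k=\beta(\bX^{k+1}-\bZ^k)$, and the steps with $\|G\|_F>\delta$ using Proposition~\ref{prop:primalbd}. You then use the unconstrained optimality condition of the prox step at an interior $\bX^{k+1}$, and the bound $\|\bX\bY\|_F\ge\sqrt{1-\delta}\,\|\bY\|_F$ from \eqref{eq:uniformcq}. Your rescaled threshold $\xi\max\{r,\lambda^{-1}\}$ only shrinks the paper's bad set, so it is a harmless variant.

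The index-alignment problem you flag is genuine, and the paper's proof has it too. The paper takes from Proposition~\ref{prop:primalbd} only that $\bX^{k+1}$ is in the region, yet applies \eqref{eq:uniformcq} and $\|G(\bX^k)\|_F\le 1$ at $\bX^k$. Your justification ``$\|G\|\le 1$ inside $\mathcal{X}$'' is backwards: Assumption~\ref{ass:xcompact} gives only $\{\|G\|_F\le 1\}\subseteq\mathcal{X}$, so $\bX^k$ itself must be $G$-good.

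The step that fails is your claim that the extra constants are absorbed by the slack in the strict inequality on $K$. That inequality only makes the claimed count positive; it cannot pay for a change in the count itself.
\begin{itemize}
\item Requiring both $k$ and $k+1$ to be $G$-good can discard up to twice as many indices. This route therefore proves the statement with $\frac{4K}{\rho\delta^2}$ in place of $\frac{2K}{\rho\delta^2}$.
\item The splitting $\bX^k\bY^k=\bX^{k+1}\bY^k+(\bX^k-\bX^{k+1})\bY^k$ keeps the count but changes the radius. If $\tau$ bounds $\|\bX^k-\bX^{k+1}\|_F$ at a good index ($\tau=\sqrt{\lambda\xi}$ with the paper's threshold), you only get $\sigma_{\min}(\bX^k)\ge\sqrt{1-\delta}-\tau$ and $\|G(\bX^k)\|_F\le\delta+2R^{\operatorname{op}}_{\bX}\tau$. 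So you need $\tau<\sqrt{1-\delta}$ and a larger threshold than \eqref{eq:ryrequire}, which the hypotheses do not provide. Using $\|\bY^k\|_F\le R_{\bY}$ on the cross term instead just moves a $\tau R_{\bY}$ term into the right-hand side of \eqref{eq:ryrequire}.
\end{itemize}
Either modification suffices, up to constants, for the downstream use in Theorem~\ref{thm:general}. You should state which weaker version you actually prove rather than claim the original.

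There are also two smaller constant slips.
\begin{itemize}
\item At a good index the two residuals together are only bounded by $2\max\{\sqrt r,\sqrt{\lambda^{-1}}\}\sqrt\xi$, not by $\max\{\sqrt r,\sqrt{\lambda^{-1}}\}\sqrt\xi$. You and the paper both drop this factor of $2$.
\item $\beta<1$ does not give the counting constant $\frac{7}{4\beta\xi}$. You need $\frac{4\beta}{7}\le\frac{7}{16}$, which follows from the hypothesis $\beta\le\frac{1}{28}$ of Proposition~\ref{prop:suff-decrease}.
\end{itemize}
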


\begin{proof}

It follows from Proposition~\ref{prop:primalbd} that there are at least $K - \frac{2K(\Phi^0 - f_{\min})}{\rho\delta^2}$ iterations of $\bX^{k+1}$ satisfying $\bX^{k+1} \in \operatorname{int}(\mathcal{X})$ as $\{\bX\in\mathbb{R}^{m\times n}:\|\bX^\top\bX-\bI_n\|_{F}\leq1\}\subseteq\mathcal{X}$ assumed in Assumption \ref{ass:xcompact}. For such $\bX^{k+1}$, 
recall the primal update
$$
\bX^{k+1}=\argmin_{\bX\in\mathcal{X}}  \left\{\mathcal{L}_{\bX^k,\lambda}(\bX,\bY^k)+\frac{r}{2}\|\bX-\bZ^k\|_F^2\right\}=\argmin_{\bX\in\R^{m\times n}}  \left\{\mathcal{L}_{\bX^k,\lambda}(\bX,\bY^k)+\frac{r}{2}\|\bX-\bZ^k\|_F^2\right\}.
$$
From the optimality condition of this subproblem we derive that
\begin{align*}
\bz\in
\nabla \ell(\bX^k)
+\partial g(\bX^{k+1})+2\bX^{k} (\bY^{k}+\rho G(\bX^k))+r(\bX^{k+1}-\bZ^k)+\frac{1}{\lambda}(\bX^{k+1}-\bX^k).
\end{align*}
Hence,
$$
\begin{aligned}
&  2\|\bX^{k} \bY^{k}\|_F\leq \max_{g'\in \partial g(\bX^{k+1})}
\left\|\nabla \ell(\bX^k)+g'+2\rho \bX^{k}G(\bX^k)+r(\bX^{k+1}-\bZ^k)+\frac{1}{\lambda}(\bX^{k+1}-\bX^k)\right\|_F
\end{aligned}
$$
and consequently by $ \max_{g'\in\partial g(\bX^{k+1})} \|\nabla \ell(\bX^k) + g'\|_F \leq L_{\ell}+L_g = L$ and $\| \bX \| \leq R^{\operatorname{op}}_{\bX}$ from Assumption \ref{ass:xcompact}, we know that
\begin{equation}\label{eq:dualbound-key23}
2\|\bX^{k}\bY^{k}\|_F \leq L+ 2\rho  R^{\operatorname{op}}_{\bX}\|G(\bX^k)\|_F+r\|\bX^{k+1}-\bZ^k\|_F+ \|\bX^{k+1}-\bX^k\|_F/\lambda.
\end{equation}

From  Proposition \ref{prop:suff-decrease} we note that   
\begin{align*}
&\frac{4\beta}{7}\sum_{k=0}^K\left\{\frac{1}{\lambda}\|\bX^{k}-\bX^{k+1}\|_F^{2}+r\|\bX^{k+1}-\bZ^k\|_F^{2}\right\}\\
\leq\ & \sum_{k=0}^K\left\{\frac{7}{16\lambda}\|\bX^{k}-\bX^{k+1}\|_F^{2}+\frac{4r\beta}{7}\|\bX^{k+1}-\bZ^k\|_F^{2}\right\}\leq \Phi^0-\Phi^{k+1},
\end{align*}
where the first inequality is from $\beta\leq1/28$. Therefore, we conclude that the iterates must satisfy
$\max\{\|\bX^{k+1}-\bX^k\|_F^2/\lambda,r\|\bX^{k+1}-\bZ^k\|_F^2\}>\xi$ for at most
$7(\Phi^0-\underline{\Phi}) /4\beta\xi$ steps, where $\underline{\Phi}>-\infty$ is the lower bound of the potential function $\Phi$. Here $\underline{\Phi}$ is independent of $\mathcal{Y}$ as
\[
\underline{\Phi}\geq \min_{\bZ} p(\bZ)=\min\limits_{\bZ}\max\limits_{\bY\in\mathcal{Y}}\min\limits_{\bX\in\mathcal{X}}F(\bX,\bY,\bZ)\geq \min\limits_{\bX\in\mathcal{X},\bZ}F(\bX,\bY^0,\bZ)\geq f_{\min}.
\]
Then we have at least $K-7(\Phi^0-f_{\min}) /4\beta\xi$ steps that
$$
\begin{aligned}
\max\left\{\frac{1}{\lambda}\|\bX^{k+1}-\bX^k\|_F^2,r\|\bX^{k+1}-\bZ^k\|_F^2\right\}  \leq \xi.
\end{aligned}
$$
This together with Proposition \ref{prop:primalbd} ($\delta< 1$)  and \eqref{eq:dualbound-key23} implies that at least $K-7(\Phi^0-f_{\min}) /4\beta\xi -2K(\Phi^0 - f_{\min})/\rho \delta^2$ steps
$$
\|\bY^{k}\|_F\leq \frac{1}{\sqrt{1-\delta}}\cdot \|\bX^{k}\bY^{k}\|_F \leq \frac{L+2\rho R^{\operatorname{op}}_{\bX} + \max\{ \sqrt{r}, \sqrt{\lambda^{-1}} \}\sqrt{\xi}}{2\sqrt{1-\delta}},
$$
where the first inequality is from \eqref{eq:uniformcq}. It implies that the projection of the iterates onto $\mathcal{Y}$ will be inactive at least $K-7(\Phi^0-f_{\min}) /4\beta\xi -2K(\Phi^0 - f_{\min})/\rho \delta^2$ steps since the algorithm operates in the regime $\mathcal{Y}\supseteq\{\bY\in\mathbb{S}^n:\|\bY\|_F\leq \bar{R}_{\bY}\}$ and $R_{\bY}\ge\bar{R}_{\bY}>\frac{L+2\rho R^{\operatorname{op}}_{\bX} + \max\{ \sqrt{r}, \sqrt{\lambda^{-1}} \}\sqrt{\xi}}{2\sqrt{1-\delta}}$. The proof is complete.
\end{proof}

\begin{rmk}\label{rmk:feasibility-condition}
In Proposition~\ref{prop:dual-bd-ybd-linearized}, since the parameter $\rho$ is independent of $R_{\bm{Y}}$,  the requirement on $R_{\bm{Y}}$ in \eqref{eq:ryrequire} is $R_{\bY}\ge\Omega(\sqrt{r})$, which is acceptable as it only needs to satisfy $r \geq \Omega(R_{\bY})$ from Proposition~\ref{prop:suff-decrease}. Therefore, we can choose $r$ sufficiently large to ensure that $R_{\bm{Y}}$ meets this condition. By setting  
\[
R_{\bm{Y}} = \frac{L + 2\rho R^{\operatorname{op}}_{\bX} + \sqrt{r\xi}}{\sqrt{1 - \delta}}
\]
and $r \geq \max \left\{\lambda^{-1}, c_1, c_2\right\}$ with
    \begin{align*} 
    &c_1:=\frac{4}{3}\left(L+4R^{\operatorname{op}}_{\bX}  + \frac{2(L + 2\rho R^{\operatorname{op}}_{\bX})\sqrt{1-\delta}+4\xi}{1-\delta} + 6\rho (R^{\operatorname{op}}_{\bX})^2 + 2\rho \right), \\
        &c_2:=4L + \frac{8(L + 2\rho R^{\operatorname{op}}_{\bX})\sqrt{1-\delta} + 48\xi}{1-\delta} + 24 \rho (R^{\operatorname{op}}_{\bX})^2 + 8 \rho, 
    \end{align*}
    we can ensure that $r \geq \max\{ L_{\rho} + L_g + 4R^{\operatorname{op}}_{\bX},  3(L_{\rho}+L_g) \}$  and $R_{\bY} >\frac{L + 2\rho R^{\operatorname{op}}_{\bX} +  \max\{ \sqrt{r}, \sqrt{\lambda^{-1}} \}\sqrt{\xi}}{2\sqrt{1-\delta}} $ as required in Proposition~\ref{prop:suff-decrease} and Proposition~\ref{prop:dual-bd-ybd-linearized}, respectively.
\end{rmk}

As Proposition~\ref{prop:dual-bd-ybd-linearized} ensures that $ \bY^{k+1}\in\operatorname{int}(\mathcal{Y}) $, the LSALM dual update reduces to
\[
\bY^{k+1} :=  \bY^k + \alpha \cdot ( G(\bX^{k+1}) - \varepsilon \bY^k),
\]
with the projection onto $ \mathcal{Y} $ not activated. This allows us to directly relate the feasibility measure $ \| G(\bX^{k+1}) \|_F $ to the relative error $ \| \bY^{k+1} - \bY^k \|_F $, which is controlled via the sufficient descent property.

\subsection{Iteration Complexity of LSALM}\label{subsec:itercomplex}
We now have all the necessary preparations in place. To prove the main global convergence theorem, we first establish a connection between the descent quantities and an $\epsilon$-KKT point.

\begin{lemma} \label{lemma-episolcol}
        Let $\epsilon\ge 0$ and $k\in\mathbb{N}$. Suppose that $(\bX^{k+1},\bY^{k+1},\bZ^{k+1})$ generated by Algorithm \ref{alg} satisfies $\bX^{k+1}\in\operatorname{int}(\mathcal{X})$, $\bY^{k+1}\in\operatorname{int}(\mathcal{Y})$ and
        \[
        \max\left\{\varepsilon,\|\bX^{k+1}-\bX^{k}\|_F,\|\bY^{k}_+(\bZ^{k}) - \bY^k\|_F,\|\bX^{k+1}-\bZ^k\|_F\right\} \leq  \epsilon.
        \]
        Then $(\bX^{k+1}, \bY^{k+1})$ is an $\mathcal{O}(\epsilon)$-KKT point.
        \end{lemma}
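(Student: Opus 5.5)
We verify the two parts of Definition~\ref{defi:KKT} separately: stationarity, $\dist(-2\bX^{k+1}\bY^{k+1},\partial f(\bX^{k+1}))=\mathcal{O}(\epsilon)$, and feasibility, $\|G(\bX^{k+1})\|_F=\mathcal{O}(\epsilon)$. All constants hidden in $\mathcal{O}(\cdot)$ may depend on $L_\ell,L_g,\rho,r,\lambda,\alpha,R_{\bY},R^{\operatorname{op}}_{\bX}$.

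For \emph{stationarity}, since $\bX^{k+1}\in\operatorname{int}(\mathcal{X})$, the constraint $\iota_{\mathcal{X}}$ is inactive and the optimality condition of the primal update gives a $\bm{g}\in\partial g(\bX^{k+1})$ with
\[
\bm{g}+\nabla\ell(\bX^k)+2\bX^k\bY^k+2\rho\bX^kG(\bX^k)+r(\bX^{k+1}-\bZ^k)+\tfrac{1}{\lambda}(\bX^{k+1}-\bX^k)=\bz .
\]
Set $\bm{v}:=\nabla\ell(\bX^{k+1})+\bm{g}+2\bX^{k+1}\bY^{k+1}\in\partial f(\bX^{k+1})+2\bX^{k+1}\bY^{k+1}$ and subtract the display from it. This leaves $\|\bm v\|_F$ bounded by the sum of five terms:
\begin{itemize}
\item $L_\ell\|\bX^{k+1}-\bX^k\|_F$;
\item $2\|\bX^{k+1}\bY^{k+1}-\bX^k\bY^k\|_F\le 2R^{\operatorname{op}}_{\bX}\|\bY^{k+1}-\bY^k\|_F+2R_{\bY}\|\bX^{k+1}-\bX^k\|_F$;
\item $2\rho\|\bX^kG(\bX^k)\|_F$;
\item $r\|\bX^{k+1}-\bZ^k\|_F$;
\item $\lambda^{-1}\|\bX^{k+1}-\bX^k\|_F$.
\end{itemize}
Terms one, two (its $\bX$ part), four and five are $\mathcal{O}(\epsilon)$ by hypothesis. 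Two quantities remain: $\|\bY^{k+1}-\bY^k\|_F$ and $\|G(\bX^k)\|_F$.

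For \emph{feasibility and the dual step}, because $\bY^{k+1}\in\operatorname{int}(\mathcal{Y})$ the projection is inactive, so $\bY^{k+1}-\bY^k=\alpha(G(\bX^{k+1})-\varepsilon\bY^k)$. Hence
\[
\|G(\bX^{k+1})\|_F\le \alpha^{-1}\|\bY^{k+1}-\bY^k\|_F+\varepsilon R_{\bY},
\]
and $\varepsilon\le\epsilon$. The problem therefore reduces to showing $\|\bY^{k+1}-\bY^k\|_F=\mathcal{O}(\epsilon)$. Once that holds, $\|G(\bX^k)\|_F\le\|G(\bX^{k+1})\|_F+\|G(\bX^{k+1})-G(\bX^k)\|_F\le\mathcal{O}(\epsilon)+2R^{\operatorname{op}}_{\bX}\|\bX^{k+1}-\bX^k\|_F$, which finishes the stationarity bound as well.

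The main obstacle is relating the actual dual step to the hypothesis on $\bY^k_+(\bZ^k)$, which is defined through the exact minimizer $\bX(\bY^k,\bZ^k)$ rather than $\bX^{k+1}$. I would write
\[
\bY^{k+1}-\bY^k=\bigl(\bY^{k+1}-\bY^k_+(\bZ^k)\bigr)+\bigl(\bY^k_+(\bZ^k)-\bY^k\bigr).
\]
The second piece is at most $\epsilon$ by hypothesis. For the first, I would use nonexpansiveness of $\proj_{\mathcal{Y}}$: $\bY^{k+1}=\proj_{\mathcal{Y}}(\cdot)$ exactly, and both arguments share $\bY^k-\alpha\varepsilon\bY^k$. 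This bounds the first piece by
\[
\alpha\|G(\bX^{k+1})-G(\bX(\bY^k,\bZ^k))\|_F\le 2\alpha R^{\operatorname{op}}_{\bX}\|\bX^{k+1}-\bX(\bY^k,\bZ^k)\|_F .
\]
This is where the primal error bound, Lemma~\ref{lemma:lip}, enters: it gives $\|\bX^{k+1}-\bX(\bY^k,\bZ^k)\|_F\le\zeta\|\bX^{k+1}-\bX^k\|_F\le\zeta\epsilon$. The remaining care is in checking that the Lipschitz estimate for $G$ on $\mathcal{X}$ (constant $2R^{\operatorname{op}}_{\bX}$) and the bounds on $\bY$ are used consistently. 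Combining all pieces then gives both KKT residuals of order $\mathcal{O}(\epsilon)$.
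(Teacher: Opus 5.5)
Your proposal is correct and follows the paper's proof essentially step for step: the interior-point optimality condition of the primal update for stationarity, the inactive dual projection to bound $\|G(\bX^{k+1})\|_F$ by $\alpha^{-1}\|\bY^{k+1}-\bY^k\|_F+\varepsilon R_{\bY}$, and nonexpansiveness of $\proj_{\mathcal{Y}}$ combined with the $2R^{\operatorname{op}}_{\bX}$-Lipschitzness of $G$ and Lemma~\ref{lemma:lip} to control $\|\bY^{k+1}-\bY^k_+(\bZ^k)\|_F$. The only difference is that you measure stationarity with the multiplier $\bY^{k+1}$, as the statement literally requires, whereas the paper's proof bounds $\dist(-2\bX^{k+1}\bY^k,\partial f(\bX^{k+1}))$; the extra term $2R^{\operatorname{op}}_{\bX}\|\bY^{k+1}-\bY^k\|_F$ this costs you is already $\mathcal{O}(\epsilon)$ by your dual-step estimate.
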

\begin{proof}
In accordance with Definition \ref{defi:KKT}, it is necessary to evaluate the two expressions, namely, $\|G(\bX^{k+1})\|_F$ and
$
\dist(-  2\bX^{k+1}\bY^k,\partial f(\bX^{k+1}))
$.
First, from Lemma \ref{lemma:lip} we know that
 \begin{equation}\label{eq:pd-impor}
 \begin{aligned}
& \|\bY^{k+1}-\bY^k_{+}(\bZ^k)\|_F \\
=\ & \|\operatorname{proj}_{\mathcal{Y}}(\bY^k + \alpha \cdot ( G(\bX^{k+1}) - \varepsilon \bY^k))-\operatorname{proj}_{\mathcal{Y}}(\bY^k+\alpha\cdot (G(\bX(\bY^k, \bZ^k))-\varepsilon\bY^k))\|_F \\
\leq\ &   2\alpha R^{\operatorname{op}}_{\bX}\|\bX^{k+1}-\bX(\bY^k, \bZ^k)\|_F \\
\leq\ &   2\alpha R^{\operatorname{op}}_{\bX} \zeta\|\bX^k-\bX^{k+1}\|_F.
\end{aligned}
\end{equation}
Then from $\bY^{k+1}\in\operatorname{int}(\mathcal{Y})$ we have
\begin{equation}
\begin{aligned}
\|G(\bX^{k+1})\|_F
\leq\ &\|G(\bX^{k+1})-\varepsilon\bY^k\|_F+R_{\bY}\varepsilon\\
=\ &\frac{1}{\alpha}\|\bY^{k+1}-\bY^k\|_F+R_{\bY}\varepsilon\\
\leq\ & \frac{1}{\alpha}\|\bY^{k}-\bY_{+}^k(\bZ^k)\|_F+\frac{1}{\alpha}\|\bY^{k+1}-\bY_{+}^k(\bZ^k)\|_F + R_{\bY}\varepsilon\\
\leq\ & \frac{1}{\alpha}\|\bY^{k}-\bY_{+}^k(\bZ^k)\|_F+ 2R^{\operatorname{op}}_{\bX} \zeta\|\bX^k-\bX^{k+1}\|_F + R_{\bY}\varepsilon \leq \left(\frac{1}{\alpha}+ 2R^{\operatorname{op}}_{\bX}\zeta+R_{\bY}\right)\epsilon.\label{eq:dualmeasure}
\end{aligned}
\end{equation}

On the other hand, it follows from $\bX^{k+1}\in\operatorname{int}(\mathcal{X})$ that $\bX^{k+1}$ satisfies the optimality condition 
  \begin{equation}\label{eq:X-optimality-condition}
\bz \in \nabla \ell(\bX^k) + \partial g(\bX^{k+1}) + 2\bX^k(\bY^k + \rho G(\bX^k)) + \frac{1}{\lambda} (\bX^{k+1} - \bX^k) + r(\bX^{k+1} - \bZ^k).
\end{equation}
Plugging the above equation into the primal stationary measure, 
we obtain
\begin{align*}
 &\dist(-  2\bX^{k+1}\bY^k,\partial f(\bX^{k+1}))\\
  =\ &  \dist(\bz, \nabla \ell(\bX^{k+1}) + \partial g(\bX^{k+1}) + 2\bX^{k+1}\bY^k )) \\
  \leq\ &  \left\| \nabla \ell (\bX^{k+1}) - \nabla \ell(\bX^k) + 2 (\bX^{k+1} - \bX^k) \bY^k - 2\rho \bX^kG(\bX^k)  - \frac{1}{\lambda} (\bX^{k+1} - \bX^k) - r (\bX^{k+1} - \bZ^k) \right\|_F\\
  \leq\ &  (L_{\ell}+2R_{\bY}+ \lambda^{-1}) \| \bX^{k+1} - \bX^k \|_F+ 2\rho  R^{\operatorname{op}}_{\bX}\|G(\bX^k)\|_F + r \| \bX^{k+1} - \bZ^k \|_F\\
\leq\ &  (L_{\ell}+2R_{\bY}+ \lambda^{-1}+4\rho (R^{\operatorname{op}}_{\bX})^2 + r)\epsilon + 2\rho  R^{\operatorname{op}}_{\bX}\|G(\bX^{k+1})\|_F\\
\leq\ & (L_{\ell}+2R_{\bY}+ \lambda^{-1}+4\rho (R^{\operatorname{op}}_{\bX})^2 +r)\epsilon + 2\rho  R^{\operatorname{op}}_{\bX}\left(\frac{1}{\alpha}+ 2R^{\operatorname{op}}_{\bX}\zeta+R_{\bY}\right)\epsilon,
\end{align*}
where the third inequality is from 
\begin{align*}
    \|G(\bX^k)\|_F &\leq \|G(\bX^{k+1})\|_F + \|(\bX^{k})^\top\bX^k - (\bX^{k+1})^\top\bX^{k+1}\|_F \\
    &\leq \|G(\bX^{k+1})\|_F + 2 R^{\operatorname{op}}_{\bX}\|\bX^k - \bX^{k+1}\|_F,
\end{align*}
and the last inequality is from \eqref{eq:dualmeasure}. The proof is complete.
\end{proof}

Armed with Propositions \ref{prop:suff-decrease}, \ref{prop:primalbd}, \ref{prop:dual-bd-ybd-linearized} and Lemma \ref{lemma-episolcol}, we present the main theorem concerning the iteration complexity of LSALM.

\begin{thm}[Iteration complexity]
\label{thm:general}
Suppose that the assumptions of Propositions \ref{prop:suff-decrease} and \ref{prop:dual-bd-ybd-linearized} hold and the sequence $\{(\bX^k,\bY^k,\bZ^k)\}_{0\leq k\leq K}$ is generated by Algorithm \ref{alg}. Then there exists a $k \in\{0,1, \ldots, K-1\}$ such that $(\bX^{k+1}, \bY^{k+1})$ is an $\mathcal{O}(K^{-1/3})$-KKT point
  if    $\varepsilon= \mathcal{O}(K^{-1/3})$ and $\beta  =\mathcal{O}(K^{-1/3})$.
\end{thm}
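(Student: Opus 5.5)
The plan is to telescope the sufficient descent inequality of Proposition~\ref{prop:suff-decrease} over $k=0,\dots,K-1$, find an index where all three residuals are small, and then invoke Lemma~\ref{lemma-episolcol}. Summing the descent inequality and using the lower bound $\underline{\Phi}\ge f_{\min}$ (established in the proof of Proposition~\ref{prop:dual-bd-ybd-linearized}, independent of $\mathcal{Y}$) gives
\[
\sum_{k=0}^{K-1}\left\{\frac{7}{16\lambda}\|\bX^{k}-\bX^{k+1}\|_F^{2}+\frac{1}{16\alpha}\|\bY^{k}-\bY_+^{k}(\bZ^k)\|_F^2+\frac{4r}{7\beta}\|\bZ^{k}-\bZ^{k+1}\|_F^{2}\right\}\le \Phi^0-f_{\min}.
\]
Since $\bZ^{k+1}-\bZ^k=\beta(\bX^{k+1}-\bZ^k)$, the last term equals $\frac{4r\beta}{7}\|\bX^{k+1}-\bZ^k\|_F^2$. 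Here $\lambda,\alpha,r$ are treated as constants, while $\beta=\Theta(K^{-1/3})$. Note that the upper bounds imposed on $\alpha$ through $\omega$, which contains $1/\sqrt{\varepsilon}$, only force $\beta\lesssim \varepsilon$ from the term $\frac{1}{16r\omega^2\alpha}$. This is consistent with choosing $\varepsilon$ and $\beta$ both of order $K^{-1/3}$, with appropriate constants.

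Next comes a counting argument mirroring the proof of Proposition~\ref{prop:dual-bd-ybd-linearized}, with threshold $\epsilon=cK^{-1/3}$. The number of indices with $\|\bX^{k+1}-\bX^k\|_F>\epsilon$ or $\|\bY_+^k(\bZ^k)-\bY^k\|_F>\epsilon$ is at most $\mathcal{O}(1/\epsilon^2)=\mathcal{O}(K^{2/3})$. The number with $\|\bX^{k+1}-\bZ^k\|_F>\epsilon$ is at most $\mathcal{O}(1/(\beta\epsilon^2))=\mathcal{O}(K^{1/3}K^{2/3})=\mathcal{O}(K)$; by choosing the constant $c$ large enough, this is at most $K/4$. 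Proposition~\ref{prop:primalbd} with fixed $\delta<1$ and $\rho$ as in Proposition~\ref{prop:dual-bd-ybd-linearized} excludes at most a fraction $2(\Phi^0-f_{\min})/(\rho\delta^2)<1$ of the steps from the UCQ region, and hence from $\operatorname{int}(\mathcal{X})$. Choosing $\rho$ somewhat larger makes this fraction at most $1/4$. Proposition~\ref{prop:dual-bd-ybd-linearized}, applied with $\xi$ a fixed constant, guarantees $\bY^{k+1}\in\operatorname{int}(\mathcal{Y})$ except on at most $\frac{7(\Phi^0-f_{\min})}{4\beta\xi}+\frac{2K(\Phi^0-f_{\min})}{\rho\delta^2}$ steps. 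The first term is $\mathcal{O}(K^{1/3})$, and the second is a fraction below $1/4$. The excluded sets therefore have total size strictly less than $K$, so some $k\in\{0,\dots,K-1\}$ satisfies all the hypotheses of Lemma~\ref{lemma-episolcol}, including $\varepsilon\le\epsilon$.

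Lemma~\ref{lemma-episolcol} then yields that $(\bX^{k+1},\bY^{k+1})$ is an $\mathcal{O}(\epsilon)=\mathcal{O}(K^{-1/3})$-KKT point. The $\mathcal{O}$ hides $1/\alpha$, $\zeta$, $R_{\bY}$, $r$, and $\rho$, all of which are $K$-independent once $\xi,\delta,\rho$ are fixed. One subtlety is that the lemma's KKT residual is stated with $\bY^k$, while the claim concerns $\bY^{k+1}$. This is bridged by $\|2\bX^{k+1}(\bY^{k+1}-\bY^k)\|_F\le 2R^{\operatorname{op}}_{\bX}\|\bY^{k+1}-\bY^k\|_F$, which is $\mathcal{O}(\epsilon)$ by \eqref{eq:pd-impor}.

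The main obstacle is the $K$-dependence of the parameters. The step size bound on $\alpha$ involves $\zeta$, while $\beta$ involves $\omega\propto 1/\sqrt{\varepsilon}$, so I must check that $\alpha$ can stay constant and that only $\beta\lesssim\varepsilon$ is needed. I must also ensure that the $\frac{1}{\beta\epsilon^2}$ count stays a controllable fraction of $K$, which is precisely what dictates the $K^{-1/3}$ rate. A further point to verify is that $R_{\bY}$, and hence $r$, does not grow with $K$ through $\xi$; fixing $\xi$ as a constant, as in Remark~\ref{rmk:feasibility-condition}, settles this.
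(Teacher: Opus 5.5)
Your proposal is correct and follows the paper's proof. Both arguments telescope Proposition~\ref{prop:suff-decrease} using $\Phi\ge f_{\min}$ and restrict attention to the good index set supplied by Propositions~\ref{prop:primalbd} and~\ref{prop:dual-bd-ybd-linearized}. Both then use $\beta\lesssim\varepsilon$ (which comes from $\omega\propto\varepsilon^{-1/2}$), take $\varepsilon$ and $\beta$ of order $K^{-1/3}$ so that the $\frac{4r\beta}{7}\|\bX^{k+1}-\bZ^k\|_F^2$ term gives residuals of order $K^{-1/3}$, and conclude with Lemma~\ref{lemma-episolcol}; your per-residual Markov counts with a union bound are just a rephrasing of the paper's averaging over $\mathcal{J}$.

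Your bridge from $\bY^k$ to $\bY^{k+1}$ via \eqref{eq:pd-impor} usefully fills a step that the paper's proof of Lemma~\ref{lemma-episolcol} skips. Enlarging $\rho$ is unnecessary, since you can instead scale $c$ against the fixed positive fraction $1-2(\Phi^0-f_{\min})/(\rho\delta^2)$.
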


\begin{proof}
Denote the index set $\mathcal{J}$ being the inactive set in Proposition \ref{prop:dual-bd-ybd-linearized} with 
\[
|\mathcal{J}|=K-\left(\frac{7}{4\beta\xi}+\frac{2K}{\rho \delta^2}\right)(\Phi^0-f_{\min}).
\]
From Proposition \ref{prop:suff-decrease} we have for any $k \in\{0,1, \ldots, K-1\}$ that
\begin{equation*}
\begin{aligned}
\Phi^k-\Phi^{k+1}\ge\ & \frac{7}{16\lambda}\|\bX^{k}-\bX^{k+1}\|_F^{2}+\frac{1}{16\alpha}\|\bY^{k}-\bY_+^{k}(\bZ^k)\|_F^2 +\frac{4r}{7\beta}\|\bZ^{k}-\bZ^{k+1}\|_F^{2}.
\end{aligned}
\end{equation*}
This together with
\begin{align*}
\Phi(\bX, \bY, \bZ) &=p(\bZ)+(F(\bX,\bY,\bZ)-d(\bY, \bZ))+(p(\bZ)-d(\bY, \bZ))\geq p(\bZ) \geq \underline{\Phi}\ge f_{\min}>-\infty
\end{align*}
implies that
    \begin{align*}
    \Phi^0-f_{\min} \geq\ & \sum_{k=0}^{K-1}\Phi(\bX^k,\bY^k,\bZ^k)-\Phi(\bX^{k+1},\bY^{k+1},\bZ^{k+1}) \\
    \geq\ & \sum_{k+1\in\mathcal{J}}\Phi(\bX^k,\bY^k,\bZ^k)-\Phi(\bX^{k+1},\bY^{k+1},\bZ^{k+1}) \\
    \geq\ & \sum_{k+1\in\mathcal{J}} \min\left\{\frac{7}{16\lambda},\frac{1}{16\alpha},\frac{4r\beta}{7}\right\} \left(\|\bX^{k}-\bX^{k+1}\|_F^{2}+\|\bY^{k}-\bY_{+}^{k}(\bZ^{k})\|_F^2 +\|\bZ^{k}-\bX^{k+1}\|_F^{2} \right).
    \end{align*}
    Therefore, there exists a $k\in\{0,1, \ldots, K-1\}$ such that $k+1 \in\mathcal{J}$ and
    \[
    \max \left\{\|\bX^{k}-\bX^{k+1}\|_F^{2},\|\bY^{k}-\bY_{+}^{k}(\bZ^{k})\|_F^2,  \|\bZ^{k}-\bX^{k+1}\|_F^{2}\right\} \leq \frac{\Phi^0-f_{\min}}{\min\left\{\frac{7}{16\lambda},\frac{1}{16\alpha},\frac{4r\beta}{7}\right\} |\mathcal{J}|}.
    \]
Since Proposition \ref{prop:suff-decrease} requires $\beta  \leq \frac{ 1}{448  r\omega^2\alpha}=\mathcal{O}(\varepsilon)$ as $w=\mathcal{O}(1/\sqrt{\varepsilon})$ known from Lemma \ref{prop:dual_eb_KL}, we set $\varepsilon= \mathcal{O}(K^{-1/3})$ and
  $\beta  =\mathcal{O}(K^{-1/3})$. This indicates that 
\[
    \max \left\{\varepsilon,\|\bX^{k}-\bX^{k+1}\|_F,\|\bY^{k}-\bY_{+}^{k}(\bZ^{k})\|_F,  \|\bZ^{k}-\bX^{k+1}\|_F\right\} = \mathcal{O}(K^{-1/3}).
    \]
As established in Propositions~\ref{prop:primalbd} and \ref{prop:dual-bd-ybd-linearized}, we have $ \bX^{k+1} \in \operatorname{int}(\mathcal{X}) $ and $ \bY^{k+1} \in \operatorname{int}(\mathcal{Y}) $ for $k+1 \in\mathcal{J}$. Therefore, the proof is completed by invoking Lemma~\ref{lemma-episolcol}.
\end{proof}

\section{Sequential Convergence Analysis}\label{sec:sequential}
We are going to further investigate the asymptotic convergence properties of the iterates generated by our proposed LSALM. Based on the classical mild Kurdyka-\L ojasiewicz (K\L) property assumption guaranteed by the semi-algebraic structure, we have the  sequential convergence results. To begin with, we show in the following lemma that the sequence $\{\bZ^k\}_{k \in \mathbb{N}}$ is bounded. Consequently, the sequence $\{(\bX^k, \bY^k, \bZ^k)\}_{k \in \mathbb{N}}$ generated by LSALM admits a cluster point.

\begin{lemma}\label{lem:zbound}
The  sequence $\{\bZ^k\}_{k \in \mathbb{N}}$ generated by Algorithm \ref{alg} satisfies $\|\bZ^k\|\leq  R^{\operatorname{op}}_{\bX}$ for each $k\ge0$.
\end{lemma}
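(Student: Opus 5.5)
The plan is to prove the bound by induction on $k$, using the fact that each $\bZ^{k+1}$ is a convex combination of the previous $\bZ^k$ and a primal iterate $\bX^{k+1}\in\mathcal{X}$.

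The base case is immediate. Since $\bZ^0=\bX^0$ satisfies $(\bX^0)^\top\bX^0=\bI_n$, we have $\bX^0\in\{\bX:\|\bX^\top\bX-\bI_n\|_F\leq1\}\subseteq\mathcal{X}$ by Assumption~\ref{ass:xcompact}. Hence $\|\bZ^0\|=\|\bX^0\|\leq R^{\operatorname{op}}_{\bX}$. One could equally note that $\|\bX^0\|=1$ and that $R^{\operatorname{op}}_{\bX}\ge1$ is forced by the inclusion.

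For the inductive step, the primal update in Algorithm~\ref{alg} is an $\argmin$ over $\mathcal{X}$, so $\bX^{k+1}\in\mathcal{X}$ and therefore $\|\bX^{k+1}\|\leq R^{\operatorname{op}}_{\bX}$ by Assumption~\ref{ass:xcompact}. The $\bZ$-update gives
\[
\bZ^{k+1}=(1-\beta)\bZ^k+\beta\bX^{k+1},
\]
with $\beta\in(0,1)$. Because the operator norm is a norm, hence convex and subadditive,
\[
\|\bZ^{k+1}\|\leq(1-\beta)\|\bZ^k\|+\beta\|\bX^{k+1}\|\leq(1-\beta)R^{\operatorname{op}}_{\bX}+\beta R^{\operatorname{op}}_{\bX}=R^{\operatorname{op}}_{\bX},
\]
which closes the induction.

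No step here is a real obstacle. The only point needing care is to use the $\mathcal{X}$-constrained update as stated in Algorithm~\ref{alg}, so that membership $\bX^{k+1}\in\mathcal{X}$ holds for every $k$, including the steps where $\bX^{k+1}$ lies on the boundary of $\mathcal{X}$ rather than in its interior. Unrolling the recursion gives the same conclusion: $\bZ^k=(1-\beta)^k\bX^0+\sum_{j=1}^k\beta(1-\beta)^{k-j}\bX^j$ is a convex combination of points of $\mathcal{X}$, and its weights sum to $1$.
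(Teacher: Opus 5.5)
Your proof is correct and takes essentially the same approach as the paper. The paper unrolls $\bZ^{k+1}=(1-\beta)\bZ^k+\beta\bX^{k+1}$ into a convex combination of $\bZ^0=\bX^0$ and the iterates $\bX^j\in\mathcal{X}$, then applies the triangle inequality with the geometric-sum weights; this is your induction written in closed form.
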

\begin{proof}
Since $\bZ^{k+1}:=\bZ^k+\beta(\bX^{k+1}-\bZ^k)$ for each $k\ge0$, we know that
\begin{align*}
\|\bZ^{k+1}\|
&=\|(1-\beta)\bZ^k+\beta\bX^{k+1}\|\notag\\
&=\|(1-\beta)^2\bZ^{k-1}+(1-\beta)\beta\bX^k+\beta\bX^{k+1}\|\notag\\
&\ \ \ \ \ldots\notag\\
&=\left\|(1-\beta)^{k+1}\bZ^{0}+\sum_{i=0}^k(1-\beta)^{i}\beta\bX^{k-i+1}\right\|\notag\\
&\leq (1-\beta)^{k+1}\|\bZ^0\|+\frac{1-(1-\beta)^{k+1}}{1-(1-\beta)}\cdot\beta  R^{\operatorname{op}}_{\bX}\leq R^{\operatorname{op}}_{\bX},
\end{align*}
where the last inequality is from $\|\bZ^0\|=\|\bX^0\|\leq R^{\operatorname{op}}_{\bX}$ and $\beta\in(0,1)$. The proof is complete.
\end{proof}

We can now derive the sequential convergence of the algorithm under the standard semi-algebraic setting. Moreover, with the parameter $\rho$ and the set $\mathcal{Y}$ chosen sufficiently large as in Proposition~\ref{prop:dual-bd-ybd-linearized}, the limiting point is guaranteed to be an $\mathcal{O}(\varepsilon)$-KKT point.

\begin{thm}[Sequential convergence]\label{thm:sequential-convergence}
  Suppose that the function $ f $ is semi-algebraic, $ g $ is continuous on $\mathcal{X}$, and $\mathcal{X}, \mathcal{Y}$ are semi-algebraic sets. Then under the assumptions of Proposition~\ref{prop:suff-decrease}, the sequence $\{ (\bX^k, \bY^k) \}_{k\in\mathbb{N}}$ generated by Algorithm \ref{alg} converges to a point $(\bX^*, \bY^*)$. Furthermore, if the assumptions of Proposition~\ref{prop:dual-bd-ybd-linearized}
  hold, then $(\bX^*, \bY^*)$ is an $ \mathcal{O}(\varepsilon)$-KKT point.
\end{thm}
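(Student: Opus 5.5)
The plan is the standard Attouch--Bolte--Svaiter KŁ framework applied to the potential $\Phi$ along the sequence $\{(\bX^k,\bY^k,\bZ^k)\}$.

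\textbf{Sufficient decrease and finite length of the increments.} Proposition~\ref{prop:suff-decrease} gives
\[
\Phi^k-\Phi^{k+1}\ge c\bigl(\|\bX^k-\bX^{k+1}\|_F^2+\|\bY^k-\bY^k_+(\bZ^k)\|_F^2+\|\bZ^k-\bZ^{k+1}\|_F^2\bigr)
\]
for some $c>0$. Since $\Phi\ge f_{\min}$, the sequence $\Phi^k$ is nonincreasing and converges to some $\Phi^*$, and the squared increments are summable. Boundedness follows from compactness of $\mathcal{X},\mathcal{Y}$ together with Lemma~\ref{lem:zbound}. Hence the cluster set $\Omega$ is nonempty and compact. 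Also, $\|\bY^{k+1}-\bY^k\|_F\to0$ by \eqref{eq:pd-impor} combined with $\|\bY^k-\bY^k_+(\bZ^k)\|_F\to0$.

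\textbf{Relative error bound.} I would bound a subgradient of $\Phi$ at $(\bX^{k+1},\bY^{k+1},\bZ^{k+1})$ by
\[
C\bigl(\|\bX^k-\bX^{k+1}\|_F+\|\bY^k-\bY^k_+(\bZ^k)\|_F+\|\bZ^k-\bZ^{k+1}\|_F\bigr).
\]
The ingredients are as follows.
\begin{itemize}
\item Danskin-type formulas for the gradients of $d$ and $p$. By strong convexity in $\bX$, $d$ is differentiable with $\nabla_{\bZ}d=r(\bZ-\bX(\bY,\bZ))$ and $\nabla_{\bY}d=G(\bX(\bY,\bZ))-\varepsilon\bY$. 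By strong concavity in $\bY$ induced by $\varepsilon$, $\nabla p(\bZ)=r(\bZ-\bX(\bY(\bZ),\bZ))$.
\item The optimality condition of the primal step, which yields an element of $\partial_{\bX}F$ at $\bX^{k+1}$ controlled by the increments. The normal cone of $\mathcal{X}$ is absorbed by working with $F+\iota_{\mathcal{X}}$ (or by using the optimality condition on $\mathcal{X}$).
\item Lemmas~\ref{lemma:lip} and \ref{prop:dual_eb_KL} to convert $\|\bX^{k+1}-\bX(\bY^k,\bZ^k)\|_F$ and $\|\bX(\bY^k,\bZ^k)-\bX(\bY(\bZ^k),\bZ^k)\|_F$ into the descent quantities.
\item The Lipschitz continuity of $\bX(\cdot,\cdot)$ and of $\bY(\cdot)$, to pass from index $k$ to index $k+1$.
\end{itemize}
This step is the main obstacle. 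The key difficulty is controlling $\|\bY(\bZ^k)-\bY^k\|_F$, which I would handle via the dual error bound produced by $\varepsilon$-strong concavity of $d(\cdot,\bZ)$: the distance to the maximizer is bounded by $\varepsilon^{-1}$ times a projected-gradient residual, which in turn is bounded by $\alpha^{-1}\|\bY^k-\bY^k_+(\bZ^k)\|_F$.

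\textbf{KŁ property and convergence.} $\Phi$ is semi-algebraic, being built from semi-algebraic $f$ and sets by min and max operations (Tarski--Seidenberg), so it satisfies the KŁ property. On the compact set $\Omega$ it is constant, equal to $\Phi^*$, by continuity; this uses continuity of $g$ on $\mathcal{X}$ and of $d$ and $p$. The uniformized KŁ inequality combined with the two estimates above then gives
\[
\sum_k \bigl(\|\bX^k-\bX^{k+1}\|_F+\|\bZ^k-\bZ^{k+1}\|_F+\|\bY^k-\bY^k_+(\bZ^k)\|_F\bigr)<\infty.
\]
Since $\|\bY^{k+1}-\bY^k\|_F\le\|\bY^{k+1}-\bY^k_+(\bZ^k)\|_F+\|\bY^k_+(\bZ^k)-\bY^k\|_F$, \eqref{eq:pd-impor} shows the sequence $\{\bY^k\}$ also has finite length. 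Hence $(\bX^k,\bY^k)$ is Cauchy and converges to some $(\bX^*,\bY^*)$.

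\textbf{Approximate KKT property of the limit.} Under the assumptions of Proposition~\ref{prop:dual-bd-ybd-linearized}, the fraction of iterations with $\bX^{k+1}\in\operatorname{int}(\mathcal{X})$ and $\bY^{k+1}\in\operatorname{int}(\mathcal{Y})$ stays bounded below for every $K$. So infinitely many indices satisfy the hypotheses of Lemma~\ref{lemma-episolcol}, with all increments tending to zero and the residual term governed by $\varepsilon$. The estimates in the proof of Lemma~\ref{lemma-episolcol} then give, along these indices,
\[
\|G(\bX^{k+1})\|_F\le o(1)+R_{\bY}\varepsilon,\qquad \operatorname{dist}\bigl(-2\bX^{k+1}\bY^{k},\partial f(\bX^{k+1})\bigr)\le o(1)+\mathcal{O}(\varepsilon).
\]
Passing to the limit using outer semicontinuity of $\partial f$ (for a weakly convex $f$ that is continuous on $\mathcal{X}$) shows that $(\bX^*,\bY^*)$ is an $\mathcal{O}(\varepsilon)$-KKT point.
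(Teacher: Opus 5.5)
Your proposal is correct and follows essentially the paper's route. It uses sufficient decrease of $\Phi$ from Proposition~\ref{prop:suff-decrease}, a relative error bound at $(\bX^{k+1},\bY^{k+1},\bZ^{k+1})$ built from the primal optimality condition, the Danskin formulas for $d$ and $p$, and Lemmas~\ref{lemma:lip}, \ref{prop:dual_eb_KL} and \ref{lemma-sollip}, then semi-algebraicity and the Attouch--Bolte--Svaiter argument, with finite length of $\{\bY^k\}$ recovered via \eqref{eq:pd-impor}. You still need to add $\iota_{\mathcal{Y}}$ alongside $\iota_{\mathcal{X}}$ and use the projection condition of the dual step, because $\nabla_{\bY}F-2\nabla_{\bY}d$ alone is not small when the projection is active.

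Your $\mathcal{O}(\varepsilon)$-KKT step, which passes to the limit along the infinitely many indices where Lemma~\ref{lemma-episolcol} applies, is a slightly more careful version of the paper's. The paper asserts that all late iterates lie in $\operatorname{int}(\mathcal{X})\times\operatorname{int}(\mathcal{Y})$. Justifying that needs the strict bounds $\|G(\bX^k)\|_F\le\delta<1$ and $\|\bY^k\|_F<\bar{R}_{\bY}$ from Proposition~\ref{prop:dual-bd-ybd-linearized}, which the paper leaves implicit.
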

\begin{proof}
Denote the function $\bar{\Phi}:\R^{m\times n}\times \mathbb{S}^{n}\times \R^{m\times n}\rightarrow\overline{\R}$:
\[
\bar{\Phi}(\bX, \bY, \bZ) := \Phi(\bX, \bY, \bZ) + \iota_{\mathcal{X}}(\bX) + \iota_{\mathcal{Y}}(\bY).
\]
Since $ \bX^k \in \mathcal{X} $ and $ \bY^k \in \mathcal{Y} $ for all $ k \geq 0 $, the sequence $\{\bar{\Phi}(\bX^k, \bY^k, \bZ^k)\}_{k\in\mathbb{N}}$ has the same sufficient decrease property as Proposition~\ref{prop:suff-decrease}. Based on the classical K\L~framework \cite{attouch2013convergence}, to prove the sequential convergence, we need to show the {\it relative error condition} also holds, i.e., the distance to the following subdifferential sets can be upper bounded by the relative change in the algorithmic iterates:
\begin{align}
  \partial_{\bX} \bar{\Phi}(\bX^{k+1}, \bY^{k+1}, \bZ^{k+1})  &=  \partial (F(\cdot , \bY^{k+1}, \bZ^{k+1}) + \iota_{\mathcal{X}}(\cdot))(\bX^{k+1}) ,\label{eq:partial-X-Phi}  \\
   \partial_{\bY} \bar{\Phi}(\bX^{k+1}, \bY^{k+1}, \bZ^{k+1})&=  \nabla_{\bY} F(\bX^{k+1}, \bY^{k+1}, \bZ^{k+1}) - 2 \nabla_{\bY} d(\bY^{k+1}, \bZ^{k+1}) + \partial \iota_{\mathcal{Y}}(\bY^{k+1}) ,\label{eq:partial-Y-Phi} \\
\nabla_{\bZ} \bar{\Phi}(\bX^{k+1}, \bY^{k+1}, \bZ^{k+1}) &=   \nabla_{\bZ}F(\bX^{k+1}, \bY^{k+1}, \bZ^{k+1}) - 2 \nabla_{\bZ} d(\bY^{k+1}, \bZ^{k+1}) + 2 \nabla_{\bZ} p(\bZ^{k+1}) .\label{eq:partial-Z-Phi}
\end{align}
First, it follows from the update rule of $ \bX $ in~\eqref{eq:primal_update} and the definition of $ F $ that
  \[
  \bz \in \nabla  \ell(\bX^k) + \partial (g+\iota_{\mathcal{X}})(\bX^{k+1}) + 2\bX^k(\bY^k+\rho G(\bX^k)) + \frac{1}{\lambda}(\bX^{k+1} - \bX^k) + r(\bX^{k+1} - \bZ^k).
  \]
  This together with 
  \begin{equation}\label{eq:seq-key1}
  \partial_{\bX} F(\bX^{k+1}, \bY^k, \bZ^k) = \nabla \ell(\bX^{k+1}) + \partial g (\bX^{k+1}) + 2\bX^{k+1}(\bY^k + \rho G(\bX^{k+1})) + r(\bX^{k+1} - \bZ^k)    
  \end{equation}
  and the Lipschitz constant computed in Lemma~\ref{lemma:laglipschitz} implies that
  \begin{align}\label{eq:seq-key0}
    &\ \dist(\bz, \partial (F( \cdot, \bY^k, \bZ^k) + \iota_{\mathcal{X}}(\cdot))(\bX^{k+1}))\notag\\
    \leq & \ \left\| \nabla \ell(\bX^{k+1}) - \nabla \ell(\bX^{k}) + 2(\bX^{k+1} - \bX^{k}) \bY^k + 2 \rho (\bX^{k+1}G(\bX^{k+1}) - \bX^k G(\bX^k)) + \frac{1}{\lambda}(\bX^{k} - \bX^{k+1}) \right\|_F\notag\\
    \leq & \ (L_{\rho}+\lambda^{-1}) \| \bX^{k+1} - \bX^k \|_F.
\end{align}
On the other hand, from \eqref{eq:seq-key1} we know
  \[
  \partial (F( \cdot, \bY^k, \bZ^k) + \iota_{\mathcal{X}}(\cdot))(\bX^{k+1}) = \partial (F( \cdot, \bY^{k+1}, \bZ^{k+1}) + \iota_{\mathcal{X}}(\cdot))(\bX^{k+1}) + 2\bX^{k+1}(\bY^k - \bY^{k+1}) + r(\bZ^{k+1}- \bZ^k),
  \]
which combined with \eqref{eq:partial-X-Phi} and \eqref{eq:seq-key0} implies that
\begin{align}\label{eq:seq-converge-partial-X-ineq}
  &\ \dist(\bz, \partial_{\bX} \bar{\Phi}( \bX^{k+1}, \bY^{k+1}, \bZ^{k+1}))\notag\\
  =&\ \dist(\bz, \partial (F( \cdot, \bY^{k+1}, \bZ^{k+1}) + \iota_{\mathcal{X}}(\cdot))(\bX^{k+1}))\notag\\
  \leq& \ \dist(\bz, \partial (F( \cdot, \bY^k, \bZ^k) + \iota_{\mathcal{X}}(\cdot))(\bX^{k+1})) +  2 \| \bX^{k+1} \| \| \bY^{k+1} - \bY^k \|_F + r \| \bZ^{k+1} - \bZ^k \|_F\notag\\
\leq&\ (L_{\rho}+\lambda^{-1}) \| \bX^{k+1} - \bX^k \|_F + 2 R^{\operatorname{op}}_{\bX} \| \bY^{k+1} - \bY^k \|_F + r \| \bZ^{k+1} - \bZ^k \|_F.
\end{align}

Next, it follows from the update rule of $ \bY $, the definition of $ F $, and the fact that $ \alpha \partial \iota_{\mathcal{Y}}(\bY^{k+1}) = \partial \iota_{\mathcal{Y}}(\bY^{k+1}) $ that
\begin{align*}
   &\ (1 - \alpha \varepsilon) (\bY^{k+1} - \bY^k) - \alpha \nabla_{\bY} F(\bX^{k+1}, \bY^{k+1}, \bZ^{k+1}) + \alpha \partial \iota_{\mathcal{Y}}(\bY^{k+1})\\
  = & \ \bY^{k+1} - (\bY^k + \alpha \nabla_{\bY} F(\bX^{k+1}, \bY^k, \bZ^{k+1})) + \partial \iota_{\mathcal{Y}}(\bY^{k+1}) \ni \bz.
\end{align*}
Substituting the above equation into \eqref{eq:partial-Y-Phi} and using Lemma~\ref{lemma-dualdiff}, we have
\begin{align}\label{eq:seq-converge-partial-Y-ineq}
  &\ \dist(\bz, \partial_{\bY} \bar{\Phi}( \bX^{k+1}, \bY^{k+1}, \bZ^{k+1}))\notag\\
  \leq &\ \left(\frac{1}{\alpha} - \varepsilon\right)\| \bY^{k+1} - \bY^k \|_F + 2 \| \nabla_{\bY} F(\bX^{k+1}, \bY^{k+1}, \bZ^{k+1}) - \nabla_{\bY} d(\bY^{k+1}, \bZ^{k+1}) \|_F\notag\\
  = &\ \left(\frac{1}{\alpha} - \varepsilon\right)\| \bY^{k+1} - \bY^k \|_F + 2 \| G(\bX^{k+1}) -  G(\bX(\bY^{k+1}, \bZ^{k+1})) \|_F\notag\\
  \leq &\ \left(\frac{1}{\alpha} - \varepsilon\right)\| \bY^{k+1} - \bY^k \|_F 
  +  4R^{\operatorname{op}}_{\bX} \| \bX(\bY^{k+1}, \bZ^{k+1}) - \bX(\bY^{k}, \bZ^{k}) \|_F 
  +  4R^{\operatorname{op}}_{\bX}\| \bX^{k+1} - \bX(\bY^{k}, \bZ^{k}) \|_F\notag\\
  \leq &\ \left(\frac{1}{\alpha} - \varepsilon+4R^{\operatorname{op}}_{\bX}\sigma_2\right)\| \bY^{k+1} - \bY^k \|_F +  4R^{\operatorname{op}}_{\bX}\zeta \| \bX^{k+1} - \bX^k \|_F+4R^{\operatorname{op}}_{\bX}\sigma_1 \| \bZ^{k+1} - \bZ^k \|_F,
\end{align}
where the second inequality follows from the Lipschitz continuity of $ G(\cdot) $ on $\mathcal{X}$ and the last inequality follows from Lemmas~\ref{lemma:lip} and \ref{lemma-sollip}.

Finally, using Lemmas~\ref{lemma-dualdiff} and~\ref{lem-proximal-smooth}, we have
\begin{align}\label{eq:seq-converge-partial-Z-ineq}
  &\ \| \nabla_{\bZ} \bar{\Phi}(\bX^{k+1}, \bY^{k+1}, \bZ^{k+1}) \|_F \notag\\
  \leq& \ r \| \bX^{k+1} - \bZ^{k+1} \|_F + 2 \| \nabla p(\bZ^{k+1}) - \nabla_{\bZ} d(\bY^{k+1}, \bZ^{k+1}) \|_F\notag \\
  \leq & \  r (\| \bX^{k+1} - \bZ^k \|_F + \| \bZ^k - \bZ^{k+1} \|_F) + 2r \| \bX(\bY^{k+1}, \bZ^{k+1}) - \bX(\bZ^{k+1}) \|_F\notag\\
  \leq & \  r\left(\frac{1}{\beta} + 1 + 4 \sigma_1\right) \| \bZ^{k+1} - \bZ^k \|_F 
  % + 2r\sigma_2 \| \bY^{k+1} - \bY^k \|_F 
  + 2r \| \bX(\bY^{k+1}, \bZ^{k}) - \bX(\bZ^{k}) \|_F\notag\\
  \leq & \ r\left(\frac{1}{\beta} + 1 + 4 \sigma_1\right) \| \bZ^{k+1} - \bZ^k \|_F 
  + 2r\omega \| \bY^{k+1} - \bY^k \|_F 
  + 4r\alpha R^{\operatorname{op}}_{\bX} \zeta(\omega + \sigma_{2}) \|\bX^{k+1} - \bX^k\|_F,
\end{align}
where $\bX(\bZ):=\mathop{\argmin}\limits_{\bX\in\mathcal{X}}F(\bX,\bY(\bZ),\bZ)$.
Here the last inequality follows from
\begin{align*}
  \| \bX(\bY^{k+1}, \bZ^k) - \bX(\bZ^k) \| & \leq \| \bX(\bY_+^k(\bZ^k), \bZ^k) - \bX(\bY(\bZ^k), \bZ^k) \|_F + \| \bX(\bY^{k+1}, \bZ^k) - \bX(\bY_+^k(\bZ^k), \bZ^k) \|_F \\
                             & \leq \omega \| \bY^k_+(\bZ^k) - \bY^k \|_F + \sigma_2 \| \bY_+^k(\bZ^k) - \bY^{k+1} \|_F\\
  &\leq   2\alpha R^{\operatorname{op}}_{\bX} \zeta(\omega+\sigma_2) \| \bX^{k+1} - \bX^k \|_F+\omega \| \bY^{k+1} - \bY^k \|_F,
\end{align*}
where the second inequality is due to the dual error bound Proposition~\ref{prop:dual_eb_KL} and \eqref{lip-y}, and the last inequality is due to \eqref{eq:pd-impor}.

Note that from \eqref{eq:pd-impor} we have
\begin{align}\label{eq:seq-converge-Yk-Yplus}
  \| \bY^{k+1} - \bY^k \|_F & \leq \| \bY^k_+(\bZ^k) - \bY^k \|_F + \| \bY^{k+1} - \bY^k_+(\bZ^k) \|_F\notag\\
  & \leq  \| \bY^k_+(\bZ^k) - \bY^k \|_F +  2\alpha R^{\operatorname{op}}_{\bX} \zeta\| \bX^{k+1} - \bX^k\|_F.
\end{align}
Combining \eqref{eq:seq-converge-partial-X-ineq}, \eqref{eq:seq-converge-partial-Y-ineq}, \eqref{eq:seq-converge-partial-Z-ineq} and \eqref{eq:seq-converge-Yk-Yplus}, we know that there exists some constant $ \tilde{c} > 0 $ such that the following {\it relative error condition} property holds:
 \[ \dist(\bz, \partial \bar{\Phi}(\bX^{k+1}, \bY^{k+1}, \bZ^{k+1})) \leq \tilde{c}\cdot (\| \bX^{k+1} - \bX^k \|_F + \| \bY_+^{k}(\bZ^{k}) - \bY^k \|_F + \|\bZ^{k+1} - \bZ^k\|_F). \]
By Lemma \ref{lem:zbound} we know $\bZ^k\in\mathcal{X}$, and then the sequence $\{ (\bX^k, \bY^k, \bZ^k) \}_{k\in\mathbb{N}}$ is bounded and has a cluster point. Also, by our assumption $ F $ is continuous on $ \mathcal{X} \times \mathcal{Y} \times \mathcal{X} $.
According to the results in \cite[Example 2]{bolte2014proximal} and our assumption that $ f $ is semi-algebraic and the sets $ \mathcal{X}, \mathcal{Y} $ are semi-algebraic, we know that $\bar{\Phi}$ is semi-algebraic, and consequently by \cite[Theorem 3.1 and Remark 3.2]{bolte2007lojasiewicz} (noting that a semi-algebraic function is subanalytic) we know $\bar{\Phi}$ is a K\L{} function.  Building on the unified convergence analysis framework in \cite[Theorem 2.9]{attouch2013convergence}, the sequence $ \{ (\bX^k, \bY^k, \bZ^k) \}_{k \in \mathbf{N}} $ is convergent.

We now consider the case where the assumptions of Proposition~\ref{prop:dual-bd-ybd-linearized} are satisfied. It follows from the discussion therein that for any $ K > 0 $, there are at least
\[
K - \left( \frac{7}{4\beta\xi} + \frac{2K}{\rho \delta^2} \right)(\Phi^0 - f_{\min})
\]  
iterations of the sequence $ \{ (\bm{X}^k, \bm{Y}^k) \}_{0 \leq k \leq K} $ lying in $ \operatorname{int}(\mathcal{X}) \times \operatorname{int}(\mathcal{Y}) $. Since $ K $ can be chosen arbitrarily, we conclude that there exists a subsequence of $ \{ (\bm{X}^k, \bm{Y}^k) \}_{k \in \mathbb{N}} $ in $ \operatorname{int}(\mathcal{X}) \times \operatorname{int}(\mathcal{Y}) $. Combining this with the fact that the entire sequence $ \{ (\bm{X}^k, \bm{Y}^k) \}_{k \in \mathbb{N}} $ converges, we know that there exists $ K_0 > 0 $ such that
\[
(\bm{X}^k, \bm{Y}^k) \in \operatorname{int}(\mathcal{X}) \times \operatorname{int}(\mathcal{Y}) \quad \text{for all } k \geq K_0.
\]
From the update rule of $ \bm{Z}^k $, we obtain $ \bm{Z}^* = \lim_{k \to \infty} \bm{Z}^k = \bm{X}^* $.
Also, for $ k \geq K_0 $, since $ \bm{Y}^k \in \operatorname{int}(\mathcal{Y}) $, it follows from the update rule of $ \bm{Y}^k $ that
\[
\| G(\bm{X}^{k+1}) \|_F \leq \frac{\| \bm{Y}^{k+1} - \bm{Y}^k \|_F}{\alpha} + \varepsilon \| \bm{Y}^k \|_F.
\]
Letting $ k \to \infty $, we obtain $ \| G(\bm{X}^*) \|_F \leq \varepsilon R_{\bm{Y}} $. Substituting this into~\eqref{eq:X-optimality-condition}, and using $ \bm{Z}^* = \bm{X}^* $ along with the definition of the limiting subdifferential, we conclude that
\[
\operatorname{dist}(-2 \bm{X}^* \bm{Y}^*, \partial f(\bm{X}^*)) = \lim_{k \to \infty} \operatorname{dist}(-2 \bm{X}^k \bm{Y}^k, \partial f(\bm{X}^k)) \leq 2\rho  R^{\operatorname{op}}_{\bX} R_{\bm{Y}} \varepsilon.
\]
The proof is complete.
\end{proof}

Unlike the complexity result in Theorem~\ref{thm:general}, which provides only a finite-step guarantee with the number of iterations determined by both $\beta$ and $\varepsilon$, Theorem~\ref{thm:sequential-convergence} guarantees asymptotic convergence over infinite steps, independent of the specific choice of $\beta$. Specifically, for any $\beta$ satisfying the conditions in Proposition~\ref{prop:dual-bd-ybd-linearized}, the entire sequence generated by LSALM converges asymptotically to an $\mathcal{O}(\varepsilon)$-KKT point.

\section{Numerical Results}
\label{sec:numerical}
In this part, we conduct numerical experiments to evaluate the performance of our LSALM on randomly generated nonsmooth quadratic problems. We also compare its performance on the nonsmooth problem (sparse PCA) and the smooth problem (graph matching) with state-of-the-art algorithms, respectively. All experiments are implemented in MATLAB 2025b and run on a machine with an Intel i5-14500 CPU (14 cores) and 32 GB of RAM.

\subsection{Randomly Generated Quadratic Problems}
Firstly, we demonstrate the robustness of LSALM regarding the algorithm parameters $(\rho, \lambda, r, \alpha, \beta)$ via the following quadratic problem (QP) with nonsmooth $\ell_1$ norm:
\begin{equation}\label{eq:prob-QP}
\begin{array}{c@{\quad}l}
\min\limits_{\bX\in\R^{m\times n}} &f(\bX):=\frac{1}{2} \tr(\bX^\top \bm{A} \bX) + \tr(\bm{G}^\top \bX) + \mu \| \bX \|_1\\
{\rm s.t.} &\bX^\top\bX=\bI_n,
\end{array}
\end{equation}
where the $\ell_1$ norm is defined as $\|\bX\|_1:=\sum_{i j}|\bX_{i j}|$.
The smooth case where \(\mu = 0\) is firstly used in \cite{gao2019parallelizable}.
The matrices \(\bm{A} \in \R^{m \times m}\) and \(\bm{G} \in \R^{m \times n}\) are generated as follows: \(\bm{A} = \bm{P} \bm{L} \bm{P}^\top\), where \(\bm{L} \in \R^{m \times m}\) is a diagonal matrix with entries \(\bm{L}_{ii} = 1.01^{1-i}\) for \(1 \leq i \leq m\), and \(\bm{P} \in \R^{m \times m}\) is an orthogonal matrix obtained from the QR decomposition of a random matrix, i.e., \texttt{qr(rand(m,m))}. The matrix \(\bm{G} = \bm{Q}\bm{D}\), where \(\bm{Q} \in \R^{m \times n}\) consists of columns \(\bm{Q}_i = \tilde{\bm{Q}}_i / \| \tilde{\bm{Q}}_i \|_2\) for \(1 \leq i \leq n\), with \(\tilde{\bm{Q}}_i\) being the \(i\)-th column of a random matrix \(\tilde{\bm{Q}} = \texttt{rand(m,n)}\). Additionally, \(\bm{D} \in \R^{n \times n}\) is a diagonal matrix with entries \(\bm{D}_{ii} = 1.01^{i-1}\) for \(1 \leq i \leq n\).
Our goal is to demonstrate the impact of the parameters of our algorithm through this problem.

We demonstrate the robustness of our algorithm by examining its performance on the above problem with $(m, n, \mu) = (20, 2, 0.35)$ across various parameter settings. For the LSALM, we set a baseline set of parameters as: $\rho = 0.15$, $\lambda = 1.35$, $\varepsilon = 10^{-8}$, $R_{\bX}^{\operatorname{op}} = 10$, $R_{\bY} = 5$, $r = 1.25$, $\alpha = 0.1$, and $\beta = 0.44$.  Then we individually adjust the parameters $r$, $\lambda$, $\rho$, $\alpha$, and $\beta$, with other parameters remained at their baseline values, to determine their respective ranges for convergence.

\begin{rmk}\label{rmk:beta-gap}
While Lemma \ref{prop:dual_eb_KL} dictates a conservative theoretical bound $\beta \le \mathcal{O}(\varepsilon)$ to safeguard against global rank-deficient regions, we empirically use a much larger $\beta = \mathcal{O}(1)$. This discrepancy stems from the favorable local geometry of the orthogonality constraints. In practice, iterates rapidly approach the Stiefel manifold where $\bX$ remains full rank and naturally satisfies the LICQ. This local LICQ provides an inherent $\mathcal{O}(1)$ strong concavity for the dual function, governing the algorithm's practical behavior and rendering the pessimistic global parameter $\varepsilon$ unnecessary.
\end{rmk}

We conduct each experiment 10 times, where the objective function and the initial point are randomly generated, and stop LSALM when  $\|\bX^{k} - \bX^{k-1}\|_F + \|\bX^{k} - \bZ^{k-1}\|_F \leq 10^{-3}$ and $\| (\bX^k)^\top \bX^k - \bI_n \|_F \leq 10^{-5}$ in each random experiment.
Figure~\ref{fig:nonsmoothQP-range} illustrates the tested parameter ranges. For each parameter, the first column indicates the interval where LSALM converged in all 10 experiments, while the second column highlights ranges where the average number of iterations was less than $110\%$ of the baseline algorithm's average. Our results confirm the algorithm's robustness, demonstrating convergence across a wide range of parameters. This also suggests that, despite potentially conservative theoretical assumptions, the algorithm performs effectively in practice, even when it doesn't strictly satisfy all assumptions from our convergence analysis. A similar phenomenon has also been observed in augmented Lagrangian methods for smooth problems on the Stiefel manifold \cite{gao2019parallelizable}. Consequently, we slightly extend the parameter selection beyond the theoretical requirements to achieve better empirical performance.

We further investigate the effect of the parameter $\beta$ on the convergence speed of LSALM. In Figure~\ref{fig:nonsmoothQP-beta-iter}, we plot the relationship between $\beta$ and the average number of iterations, using the same settings as in the previous experiment. The average is computed only over those instances in which the algorithm successfully converged in all 10 random trials for the given value of $\beta$. As observed, when convergence is achieved, a larger $\beta$ generally leads to faster convergence, as indicated by the reduced number of iterations. However, $\beta$ appears to have an upper bound beyond which LSALM may fail to converge. Specifically, when $\beta$ exceeds this threshold (empirically observed to be 0.49 in this experiment), the algorithm fails to converge in all instances.

\begin{figure}[htbp]
  \centering
  \begin{subfigure}{0.49\linewidth}
    \centering
    \includegraphics[width=\textwidth]{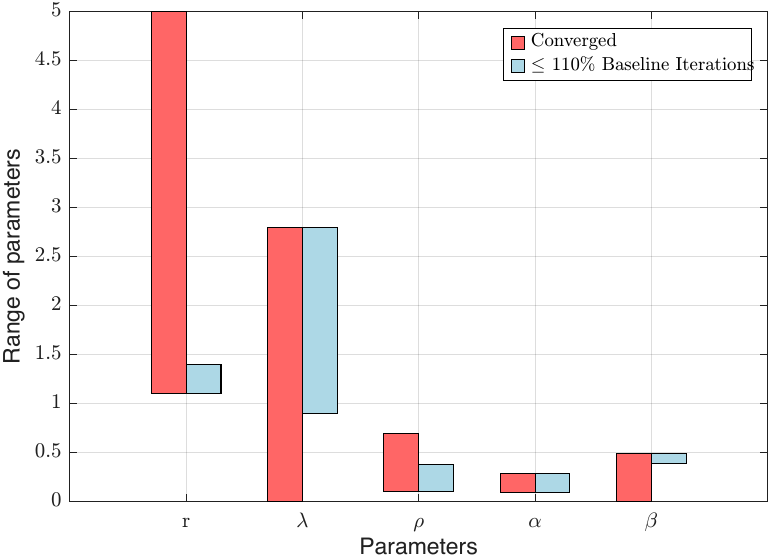}
  \caption{Feasible Range of Parameters}\label{fig:nonsmoothQP-range}
  \end{subfigure}
  \hfill
  \begin{subfigure}{0.49\linewidth}
    \centering
    \includegraphics[width=\textwidth]{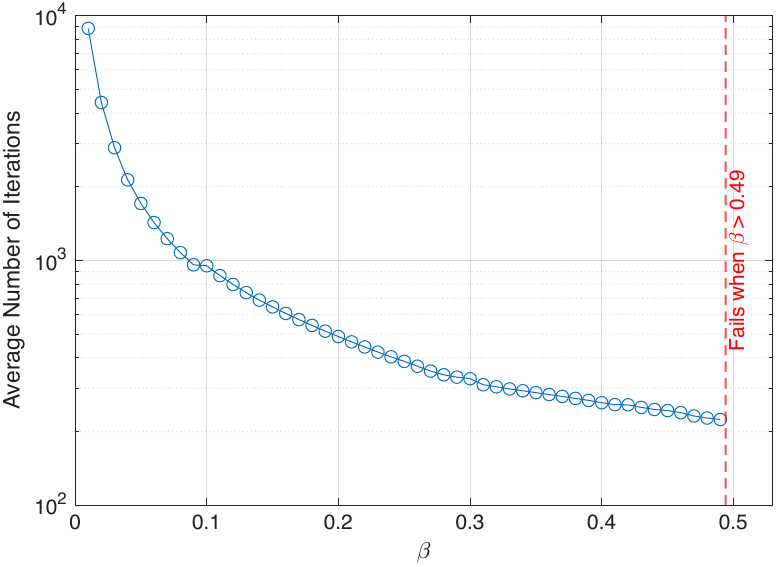}
  \caption{Effect of $\beta$ on the Average Number of Iterations}\label{fig:nonsmoothQP-beta-iter}
  \end{subfigure}
  \caption{Performance of LSALM in the Nonsmooth QP Experiment}
\end{figure}

We also demonstrate the effect of the parameter $\rho$ on the feasibility violation $\| (\bX^k)^\top \bX^k - \bI_n\|_F$, and the update of primal variables $\|\bX^k - \bX^{k-1}\|_F + \| \bX^k - \bZ^{k-1} \|_F$ in Figure~\ref{fig:nonsmoothQP-rho}. The experiment is conducted under the problem setting $(m,n, \mu) = (20, 20, 0.5)$. We vary $\rho \in \{ 0.1, 0.2, 0.5, 1, 50 \}$ while fixing the other algorithmic parameters as follows: $\lambda = 0.01$, $\varepsilon = 10^{-8}$, $R_{\bX}^{\operatorname{op}} = 10$, $R_{\bY} = 20$, $r = 30$, $\alpha = 0.1$, and $\beta = 0.3$. The figures show that the algorithm fails to converge when $\rho = 0.1$, while convergence becomes faster as $\rho$ increases. However, comparing the curves corresponding to $\rho = 1$ and $\rho = 50$, we observe that when $\rho$ is too large, although the feasibility violation decreases fast at start, the convergence of the algorithm becomes slower in later iterations. This indicates that an appropriately chosen $\rho$ is important for achieving efficient convergence in practice. Furthermore, Figure~\ref{fig:nonsmoothQP-feasibility} shows that the limiting feasibility violation attained by the algorithm is governed by the perturbation parameter $\varepsilon = 10^{-8}$.

\begin{figure}[htbp]
  \centering
  \begin{subfigure}{0.49\linewidth}
    \centering
    \includegraphics[width=\textwidth]{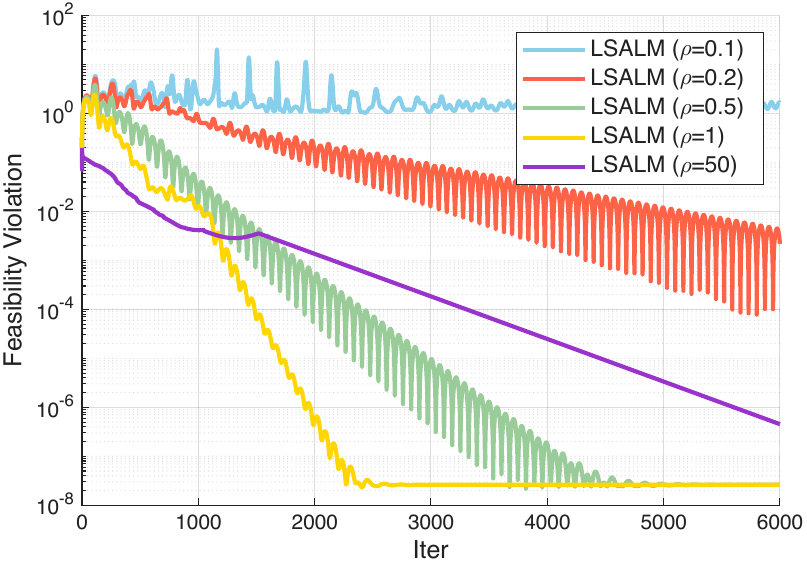}
  \caption{Feasibility Violation}\label{fig:nonsmoothQP-feasibility}
  \end{subfigure}
  \hfill
  \begin{subfigure}{0.49\linewidth}
    \centering
    \includegraphics[width=\textwidth]{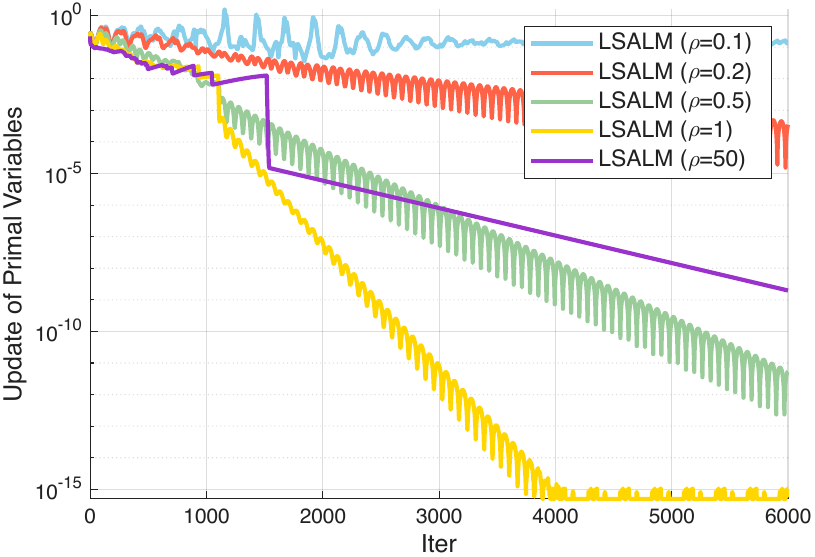}
  \caption{Update of Primal Variables}\label{fig:nonsmoothQP-primal-updates}
  \end{subfigure}
  \caption{Feasibility and Update of Primal Variables with Different $\rho$}\label{fig:nonsmoothQP-rho}
\end{figure}

We now investigate the relationship between the smoothing parameter $r$ and $\beta$ guided by our convergence theory. 
According to Lemma~\ref{prop:dual_eb_KL} and Proposition~\ref{prop:suff-decrease}, the theoretical upper bound for $\beta$ can be explicitly characterized as:
\begin{equation*}
    \beta \le \mathcal{O}\Bigg( \varepsilon\alpha \cdot \underbrace{\left( 1 + \frac{4(R_{\bX}^{\operatorname{op}})^2 \alpha}{r-\mu_{\rho}} \right)^{-2}}_{\text{pre-asymptotic penalty factor}} \Bigg).
\end{equation*}
While this bound becomes asymptotically independent of $r$ as $r \to \infty$ (converging to $\mathcal{O}(\varepsilon\alpha)$), the pre-asymptotic penalty factor plays a dominant role in the practical regime of moderate $r$ values. 
Specifically, as $r$ increases in this finite regime, the term $4(R_{\bX}^{\operatorname{op}})^2 \alpha / (r-\mu_{\rho})$ decreases rapidly. This significantly relaxes the penalty factor towards $1$, thereby expanding the allowable upper bound for $\beta$. To validate this theoretical relationship, we conduct a numerical experiment with problem setting $(m,n,\mu)=(5, 2, 1)$. The LSALM parameters were uniformly set as $\rho=0.1$, $\lambda=1$, $\varepsilon=10^{-8}$,  $\alpha=0.1$, $R_{\bX}^{\operatorname{op}}=10$, and $R_{\bY}=10$ uniformly. We then vary $r$ from $7$ to $66$ with a gap of $0.1$, and $\beta$ from $0.1$ to $0.25$ with a gap of $0.01$ simultaneously. Figure~\ref{fig:nonsmoothQP-r-beta-rel} visualizes the convergence results for each combination of $r$ and $\beta$ across 10 random instances. In Figure~\ref{fig:nonsmoothQP-converge-all}, green means LSALM converges in all 10 random instances, and blue means at least one failure to converge. Figure~\ref{fig:nonsmoothQP-converge-case} shows the number of convergent instance for each combination of $r$ and $\beta$.
The figure aligns with our theoretical upper bound of $\beta$. Note that we have verified in numerical experiment larger values of $\beta$ lead to faster convergence when LSALM converges. However, since $r$ is the proximal parameter, increasing $r$ generally slows down the algorithm.
Thus, to accelerate convergence, we need to find a balance between the values of $r$ and $\beta$.

\begin{figure}[h]
  \centering
  \begin{subfigure}{0.49\linewidth}
    \centering
    \includegraphics[width=\textwidth]{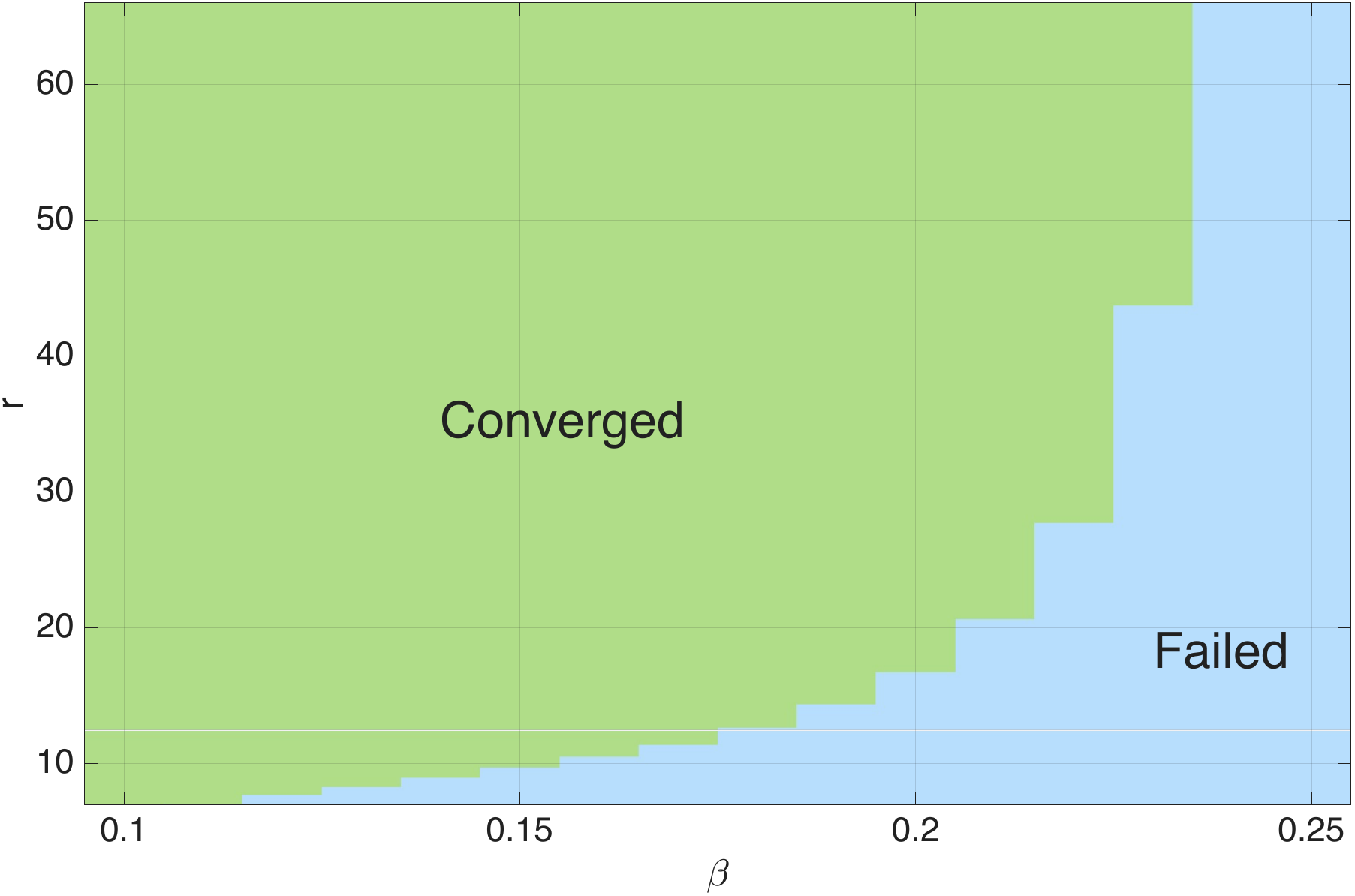}
  \caption{Region of $r$ and $\beta$ Convergent in All Instances}\label{fig:nonsmoothQP-converge-all}
  \end{subfigure}
  \hfill
  \begin{subfigure}{0.49\linewidth}
    \centering
    \includegraphics[width=\textwidth]{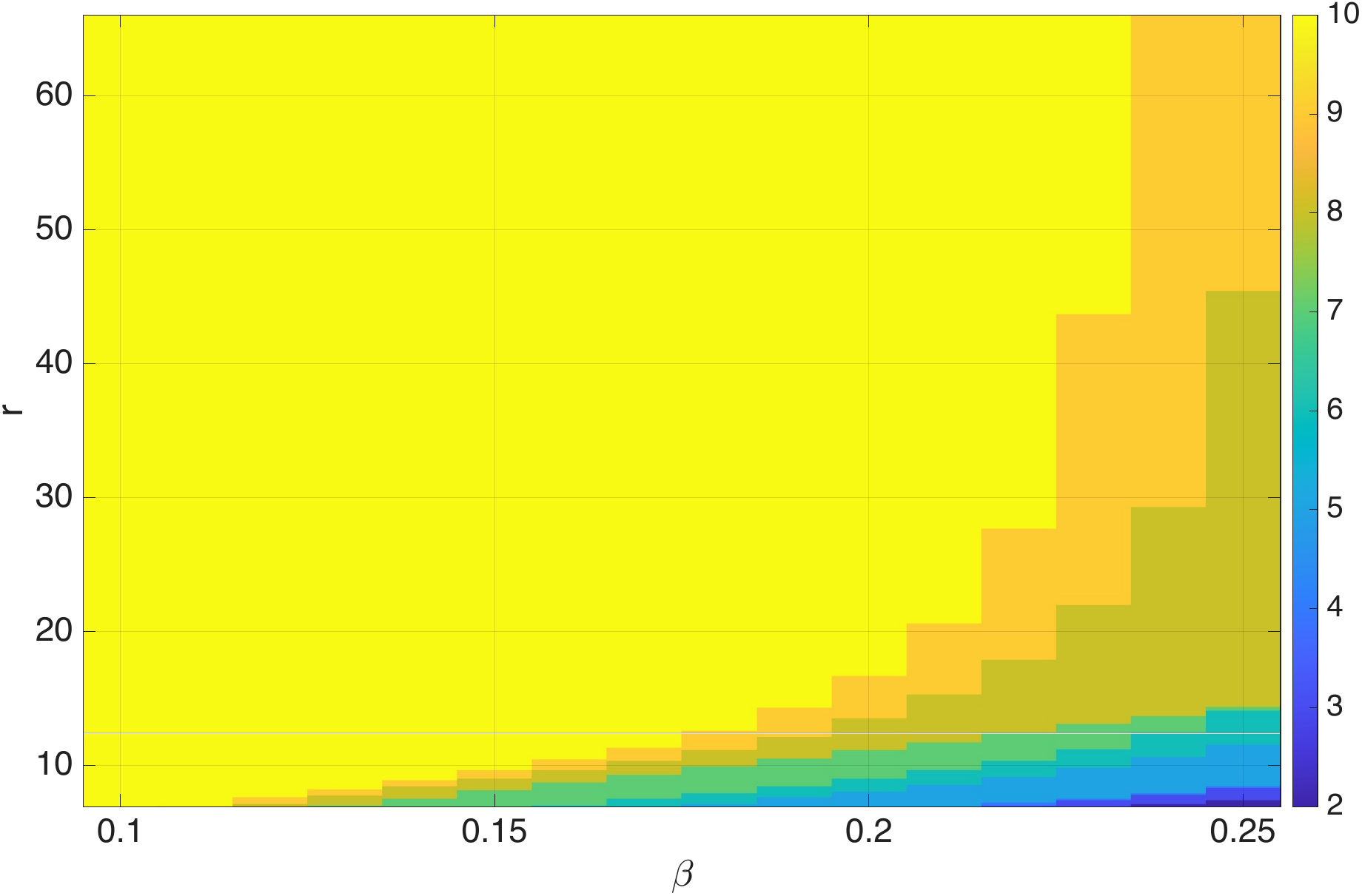}
  \caption{Number of Convergent Instances of LSALM}\label{fig:nonsmoothQP-converge-case}
  \end{subfigure}
  \caption{Convergence of LSALM with varying $r$ and $\beta$}\label{fig:nonsmoothQP-r-beta-rel}
\end{figure}

\subsection{Sparse Principal Component Analysis}
Principal Component Analysis (PCA) is a key method for analyzing high-dimensional data.
Sparse PCA is a variant of PCA which improves interpretability by finding principal components with very few non-zero entries.
For a given data matrix $\bm{A} \in \mathbb{R}^{p \times m}$, the goal of sparse PCA is to find the top $n$ sparse loading vectors, where $n < \min\{ p, m \}$. This problem is formulated as:
\begin{equation}\label{eq:sparse_PCA}
\begin{array}{c@{\quad}l}
\min\limits_{\bX\in\R^{m\times n}} &-\operatorname{tr}(\bX^{\top} \bm{A}^{\top} \bm{A} \bX)+\mu\|\bX\|_1\\
{\rm s.t.} &\bX^\top\bX=\bI_n.
\end{array}
\end{equation}
Here $\mu$ is a regularization parameter.
When $\mu=0$, this problem reduces to standard PCA, which involves finding the leading $n$ eigenvectors of $\bm{A}^{\top} \bm{A}$. When $\mu>0$, the $\ell_1$ norm $\|\bX\|_1$ encourages the loading vectors to be sparse. Solving \eqref{eq:sparse_PCA} is relatively simple when $n=1$.
However, for $n>1$, the problem is more complex because it requires enforcing both sparsity and orthogonality simultaneously. LSALM is designed to solve this more challenging case for larger values of $n$.

To evaluate the performance of LSALM, we compare it against the following algorithms: ManPG-Ada \cite{chen2020proximal}, PAMAL \cite{chen2016augmented}, SOC \cite{lai2014splitting}, and RADMM \cite{li2025riemannian}.
The experimental setup is as follows. We fix $p = 1000$ and $\mu = 0.5$ uniformly. The sparse PCA problem is solved across varying values of $m \in \{ 300, 400, 500, 600, 700, 800 \}$ with $n = m / 2$. The data matrix $\bm{A}$ is synthetically generated following the procedure in \cite{huang2022riemannian}. Specifically, $\bm{A}$ is constructed from five principal components, with each component repeated $p/5$ times (refer to \cite[Figure 4]{huang2022riemannian} for component details). Gaussian noise $\mathcal{N}(0,0.25)$ is then added to each entry of this matrix.
A common initial point is generated by projecting a randomly sampled matrix with standard Gaussian entries onto the Stiefel manifold, i.e. $\operatorname{proj}_{\St(m, n)}(\texttt{randn}(m, n))$ in MATLAB.

The parameter settings for each algorithm are as follows. For ManPG-Ada and PAMAL, we adopt the settings used in \cite[Section 6]{chen2020proximal}. For SOC, the penalty parameter is set to $\beta = 1.5 \cdot L_{\ell} $. For RADMM, we choose the smoothing parameter $\gamma = 10^{-12}$, the penalty parameter $ \rho = L_{\ell} $, and a fixed step size $\eta_k = \eta = 1 / (2L_{\ell}) $. The definitions of these parameters can be found in \cite{chen2020proximal,li2025riemannian}. For LSALM, we set $\rho = 10 $, $\alpha = \operatorname{round}(0.07 \cdot \sqrt{mn})$, $\beta = 0.5$, $\varepsilon = 10^{-10}$, $r = 15 $, $\lambda = 1 / L_{\ell}$, $R^{\operatorname{op}}_{\bX} = 10$, and $R_{\bY} = 10^3$.
All algorithms terminate when either the number of iterations reaches $30000$, or both the variable update condition $\| \bX^{k} - \bX^{k-1} \|_F \leq 10^{-4}$ and the respective constraint violation condition are satisfied:
\begin{equation}\label{eq:constraint-violation}
\begin{aligned}
&\text{SOC and PAMAL: } &&\frac{\| \bm{Q}^k - \bm{P}^k \|_F}{\max\{ 1, \| \bm{Q}^k \|_F, \| \bm{P}^k \|_F \}} + \frac{\| \bX^k - \bm{P}^k \|_F}{\max\{ 1, \| \bX^k \|_F, \| \bm{P}^k \|_F \}} \leq 10^{-4}, \\
&\text{RADMM: } &&\frac{\| \bX^k - \bZ^k \|_F}{\max\{ 1, \| \bX^k \|_F, \| \bZ^k \|_F \}} \leq 10^{-4}, \\
&\text{LSALM: } &&\| (\bX^k)^\top \bX^k - \bI_n \|_F \leq 10^{-4} .
\end{aligned}
\end{equation}

Each experiment is repeated 10 times, and all algorithms successfully converge across all test instances. For each algorithm, we report the following statistics: average CPU time (``T''), average number of iterations (``\#I''), average time per iteration (``T/I''), average final objective value (``Obj''), and average sparsity of the returned solution (``S''), as summarized in Table~\ref{table:sparse-pca-scaling}. Here, sparsity is defined as the proportion of entries in the solution whose absolute values are smaller than \( 10^{-5} \).
PAMAL is excluded from the table due to its significantly slower performance. For example, it requires an average CPU time of 2242 seconds for the case \( (m, n) = (800, 400) \). As shown in the table, our algorithm consistently outperforms the others in terms of CPU time, and the performance advantage becomes increasingly pronounced as the problem dimension grows.

Furthermore, Figure~\ref{fig:sparsepca-scaling} presents the relationship between \( m \) and both the average CPU time and the average time per iteration (in log scale), with fitted lines illustrating the growth trend. This figure provides a visual comparison of the practical scaling behavior of the algorithms with respect to \( m \). As shown, LSALM consistently demonstrates lower empirical complexity and outperforms the competing algorithms in both average CPU time and per-iteration efficiency, highlighting its superior scalability.

\begin{table}[htbp]\tiny
	\centering
\resizebox{1\textwidth}{!}{%
	\begin{tabular}{c|ccccc|ccccc|ccccc|ccccc}
		\hline
		\multicolumn{1}{c}{} & \multicolumn{5}{c}{ManPG-Ada} & \multicolumn{5}{c}{SOC}   & \multicolumn{5}{c}{RADMM} & \multicolumn{5}{c}{LSALM}  \bigstrut\\
      \hline
$m,n$&T(s)& \#I &  T/I(s) & Obj & S(\%) & T(s)&  \#I &  T/I(s) & Obj & S(\%)&T(s)& \#I &  T/I(s) & Obj & S(\%)&T(s)& \#I &  T/I(s) & Obj & S(\%) \bigstrut\\
      \hline
      300, 150 & 33.5  & 504 & 0.066 & -117.9 & 99.32 & 10.9  & 1694 & 0.006 & -118.8 & 99.31 & 4.4   & 1235 & 0.004 & -118.9 & 99.31 & 3.2  & 1101 & 0.003 & -118.7 & 99.32 \\
      400, 200 & 65.9  & 398 & 0.166 & -159.1 & 99.47 & 25.1  & 2287 & 0.011 & -160.0 & 99.46 & 9.7   & 1570 & 0.006 & -159.8 & 99.47 & 8.0  & 1480 & 0.005 & -159.7 & 99.47 \\
      500, 250 & 134.2 & 442 & 0.303 & -201.0 & 99.56 & 48.3  & 2885 & 0.017 & -201.5 & 99.55 & 18.4  & 1964 & 0.009 & -201.5 & 99.55 & 12.4 & 1562 & 0.008 & -200.9 & 99.56 \\
      600, 300 & 301.9 & 536 & 0.563 & -242.5 & 99.62 & 91.2  & 3249 & 0.028 & -242.9 & 99.62 & 43.7  & 2554 & 0.017 & -242.6 & 99.61 & 19.7 & 1508 & 0.013 & -242.9 & 99.62 \\
      700, 350 & 440.0 & 546 & 0.805 & -283.9 & 99.66 & 140.9 & 3713 & 0.038 & -286.4 & 99.66 & 59.6  & 2623 & 0.023 & -286.2 & 99.66 & 31.5 & 1783 & 0.018 & -284.7 & 99.66 \\
      800, 400 & 763.2 & 642 & 1.189 & -324.6 & 99.70 & 231.1 & 4560 & 0.051 & -324.8 & 99.70 & 106.0 & 3356 & 0.032 & -324.8 & 99.70 & 41.0 & 1731 & 0.024 & -325.0 & 99.70 \\
\hline
	\end{tabular}%
}
	\caption{Average Performance of the Algorithms for Different $(m, n)$}\label{table:sparse-pca-scaling}
\end{table}
\begin{figure}[htbp]
  \centering
  \begin{subfigure}{0.49\linewidth}
    \centering
    \includegraphics[width=\textwidth]{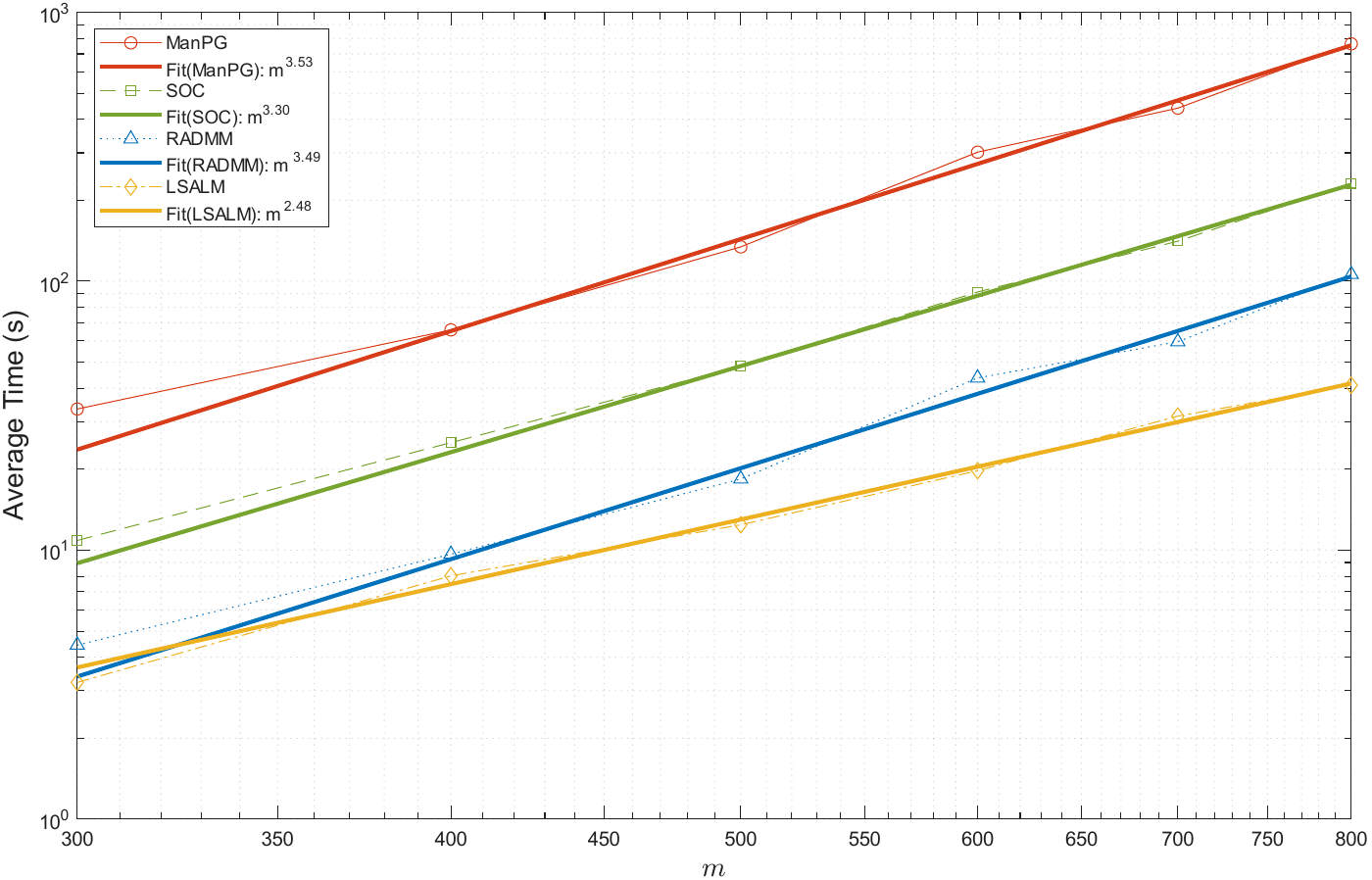}
    \caption{Average Time}
  \end{subfigure}
  \hfill
  \begin{subfigure}{0.49\linewidth}
    \centering
    \includegraphics[width=\textwidth]{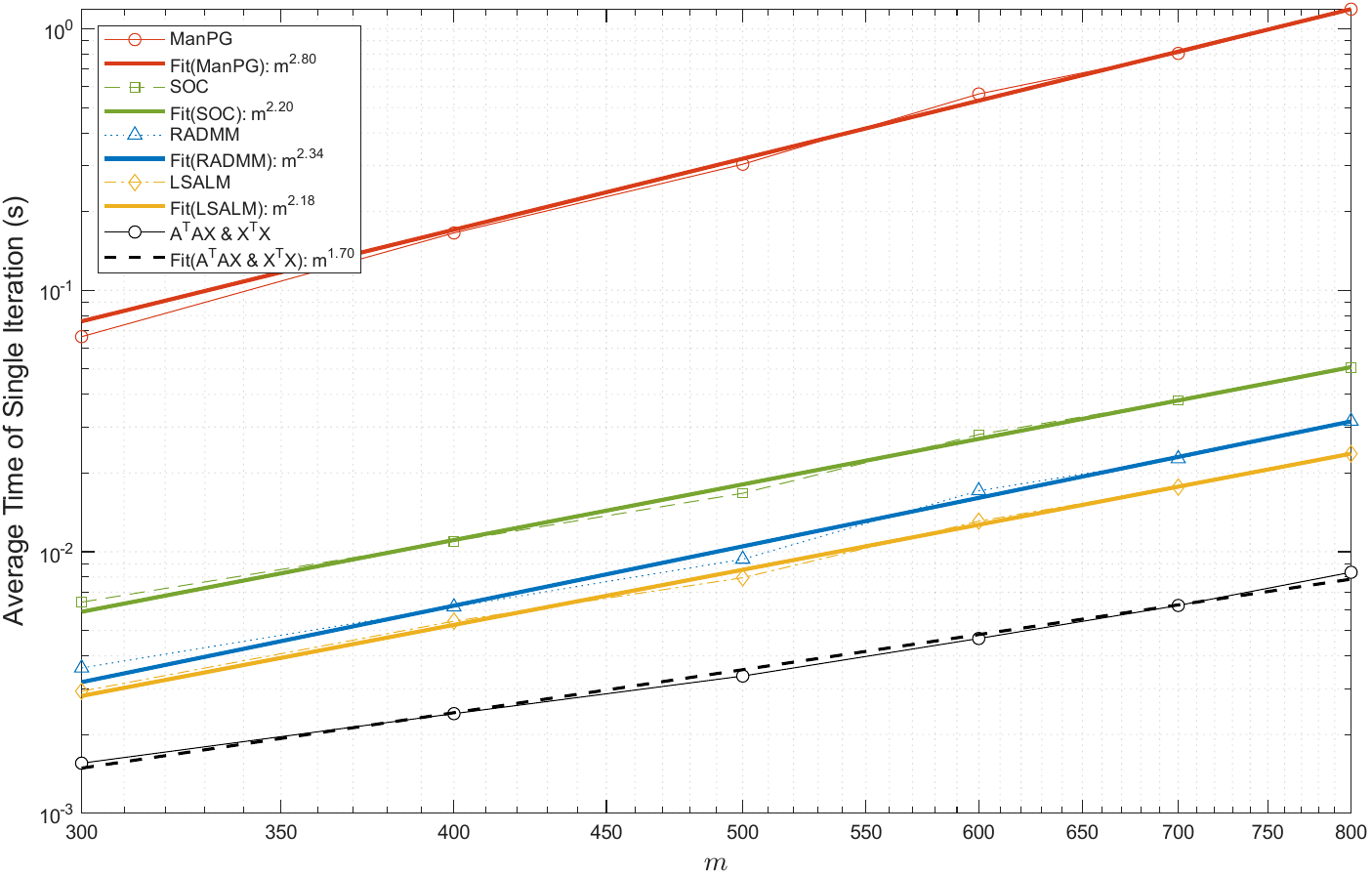}
    \caption{Average Time of Single Iteration}
  \end{subfigure}
  \caption{Comparison of Average CPU time and Per-Iteration Efficiency of the Algorithms}\label{fig:sparsepca-scaling}
\end{figure}

In the previous experiment, different algorithms often converged to different solutions, making it difficult to fairly compare their convergence speeds. To address this, we adopt a unified initialization strategy designed to encourage convergence to a common solution across all algorithms. Specifically, we adopt the initialization procedure proposed in \cite{chen2020proximal}. In each instance, we first generate a random point as in the last experiment and then run the Riemannian subgradient method (RSM) \cite{li2021weakly} for 250 iterations using a diminishing stepsize \( 1 / k^{3/4} \) at iteration \( k \). The resulting point is then used as the common starting point for all algorithms.

All other settings remain the same as in the previous experiment, except for the stopping criteria. In this experiment, ManPG-Ada is used as the baseline algorithm. It is run until its stopping criterion \( \| \bm{V}^k / t \|_F^2 \leq 10^{-8}mn \) is satisfied, yielding the baseline solution \( \bX_M \). The other algorithms are terminated when they satisfy both \( f(\bX^k) - f(\bX_M) \leq 10^{-4} \) and the corresponding constraint violation condition specified in \eqref{eq:constraint-violation}.

Each experiment is repeated 10 times. For instances where all algorithms successfully converge to the baseline solution, that is, the returned solution $\bX_r$ satisfies $\| \bX_r-\bX_M \|_F \leq 10^{-2}$ for every algorithm, we report the average performance metrics in Table~\ref{table:sparse-pca-scaling-local}.  The average final objective value and sparsity are excluded, as all algorithms converge to the baseline solution $\bX_M$ in these cases.  As shown in the table, our algorithm continues to outperform the competing methods in terms of convergence speed.

\begin{table}[htbp]\footnotesize
	\centering
	\begin{tabular}{c|ccc|ccc|ccc|ccc}
		\hline
		\multicolumn{1}{c}{} & \multicolumn{3}{c}{ManPG-Ada} & \multicolumn{3}{c}{SOC}   & \multicolumn{3}{c}{RADMM} & \multicolumn{3}{c}{LSALM}  \bigstrut\\
      \hline
      $m,n$    & T(s)  & \#I & T/I(s) & T(s)  & \#I  & T/I(s) & T(s) & \#I  & T/I(s) & T(s) & \#I  & T/I(s)  \bigstrut \\
      \hline
      300, 150 & 1.2  & 75  & 0.016 & 2.4  & 368  & 0.007 & 1.0  & 257 & 0.004 & 0.5  & 280 & 0.003 \\
      400, 200 & 3.0  & 102 & 0.030 & 6.5  & 588  & 0.011 & 2.5  & 400 & 0.007 & 1.3  & 266 & 0.005 \\
      500, 250 & 6.3  & 90  & 0.070 & 8.3  & 491  & 0.017 & 3.2  & 332 & 0.010 & 1.5  & 207 & 0.007 \\
      600, 300 & 30.0 & 105 & 0.284 & 17.1 & 606  & 0.028 & 7.2  & 410 & 0.018 & 3.1  & 248 & 0.012 \\
      700, 350 & 42.0 & 119 & 0.352 & 26.7 & 696  & 0.038 & 10.8 & 469 & 0.023 & 4.6  & 282 & 0.016 \\
      800, 400 & 86.8 & 195 & 0.445 & 71.2 & 1423 & 0.050 & 28.8 & 961 & 0.030 & 12.3 & 564 & 0.022 \\
\hline
	\end{tabular}%
	\caption{Average Performance of the Algorithms with RSM initialization for Different $(m, n)$}\label{table:sparse-pca-scaling-local}
\end{table}

\subsection{Graph Matching}
Although LSALM is primarily motivated by nonsmooth objective functions, we also investigate its performance on the graph matching problem. Our results will show that LSALM remains comparable even when dealing with smooth objective functions.

In the graph matching problem between a pair of graphs \((\mathcal{G}_1, \mathcal{G}_2)\), we set the node-affinity matrix \(\bm{K}^p = \bm{0}\). For each edge, we set the feature \(q_c\) as the distance between the two incident nodes. Then we define the edge-affinity matrix \(\bm{K}^q\) by \(\bm{K}_{c_i c_j}^q=\exp (-(q_{c_i}^1-q_{c_j}^2)^2/2500)\), which quantifies the similarity between the \(c_i\)th edge of \(\mathcal{G}_1\) and the \(c_j\)th edge of \(\mathcal{G}_2\). The graph matching problem is formulated as
\begin{equation}\label{eq:graph_matching}
\begin{array}{c@{\quad}l}
\max\limits_{\bX\in\R^{n\times n}} &\operatorname{vec}(\bX)^{\top} \bm{K} \operatorname{vec}(\bX)= \sum_{i=1}^n \bX_{:,i}^\top\bm{K}\bX_{:,i}\\
{\rm s.t.} &\bX^\top\bX=\bI_n,\ \bX\ge\bm{0},
\end{array}
\end{equation}
where the data matrix \(\bm{K}\) is non-negative. We solve the following penalized version:
\begin{equation}\label{eq:penalized_graph_mathcing}
\begin{array}{c@{\quad}l}
\min\limits_{\bX\in\R^{m\times n}} & \ell(\bX) :=-\sum_{i=1}^n \bX_{:,i}^\top\bm{K}\bX_{:,i}+\mu\|\max\{\bz,-\bX\}\|_F^2\\
{\rm s.t.} &\bX^\top\bX=\bI_n.
\end{array}
\end{equation}
The exact penalty properties are analyzed in \cite{qian2024error}. For our experiments, we use the CMU House dataset\footnote{The dataset is downloaded from \url{https://github.com/zhfe99/fgm}.} \cite{zhou2015factorized}, which contains 111 frames of a house, each annotated with 30 landmarks. Consequently, the graph matching problem has dimensions $m = n = 30$. Empirically, larger values of \(\mu\) degrade solution quality because the penalty term begins to dominate the object. Therefore, we use $\mu = 2$, which is relatively small. The drawback of a small \(\mu\) is that the resulting solution from \eqref{eq:penalized_graph_mathcing} may violate non-negativity constraints. To fix this issue, we employ a simple heuristic rounding scheme to obtain an assignment matrix. Specifically, suppose that we obtained a solution $\bX$ from \eqref{eq:penalized_graph_mathcing}. We first generate a matrix $\bX_{R}$, by setting $(\bX_R)_{ij^*} = 1 $  and $(\bX_R)_{ij} = 0$ for $j \neq j^*$ for each row \( i \), where $ j^* = \argmax_j \bX_{ij}$. In most cases, $\bX_{R}$ already satisfies the requirements. Then we use it in that case. In the rare cases where there exists an column $c_1$ has more than one 1 in $\bX_{R}$, we perform the following operations: Identify the two conflicting rows \(r_1\), \(r_2\), and a column $c_2$ with all 0. Then we reassign the entries, i.e. $(\bX_R)_{r_1, c_1} = 0$,  $(\bX_R)_{r_1, c_2} = 1$ or $(\bX_R)_{r_2, c_1} = 0$,  $(\bX_R)_{r_2, c_2} = 1$ according to a certain rule.

In the experiment, we first generate an initial point by $\proj_{\St(m,n)}(\texttt{randn(m,n)})$. After that, we run RGD for 10 iterations with a fixed step size of \(0.1\) starting from this point. Then we run all algorithms from this point. We use image No.1 to match image No.30, No.60, No.90, respectively. We repeat the algorithm for 10 times, and report the average performance in Table~\ref{tab:graph_matching}, where ``Obj'' means the average objective value, and ``F-mea'' denotes the average F-measure scores between the obtained solution and the ground truth among the 10 random experiments.

The parameters of the algorithms are set as follows: For the graph matching problem, we set RGD with trial step size \(\eta = 0.1 \), backtracking coefficient $\gamma = 0.5$, and sufficient decrease parameter \(\delta = 0.5\). The penalty parameter of PCAL is \( \beta = 20 \), with a fixed step size \(1 / \eta = 0.013\). The parameters of Landing are \(\lambda = 10\) with step size \(\eta = 0.015 \). For LSALM, the parameters are \(\alpha = 6 \), \(\beta = 0.2 \), \(\rho = 1 \), \(\varepsilon=10^{-9}\), \(r = 1\), \(\lambda = 0.025 \), \(R^{\operatorname{op}}_{\bX} = 10\), and \(R_{\bY} = 10^3\).
The meanings of the parameters for PCAL and Landing can be found in their respective papers \cite{gao2019parallelizable, ablin2022fast}.

The algorithms are stopped when the following criteria for stationarity are satisfied:
 \begin{align*}
   & \text{RGD:} && \|\grad \ell(\bX^k)\|_F \leq 10^{-4},\\
   & \text{PCAL and Landing:} && \|\nabla \ell(\bX^k) - \bX^k(\nabla \ell(\bX^k)^\top \bX^k + (\bX^k)^\top \nabla \ell(\bX^k)) / 2 \|_F \leq 10^{-4}\\
   & && \text{and } \|(\bX^k)^\top \bX^k - \bI_n\|_F \leq 10^{-6},\\
   & \text{LSALM:} && \|\nabla \ell(\bX^k) + 2\bX^k\bY^k\|_F \leq 10^{-4} \text{ and } \|(\bX^k)^\top \bX^k - \bI_n\|_F \leq 10^{-6}.\\
 \end{align*}

\begin{table}[htbp]\tiny
	\centering
    \resizebox{1\textwidth}{!}{%
	\begin{tabular}{c|cccc|cccc|cccc|cccc}
		\hline
		\multicolumn{1}{c}{} & \multicolumn{4}{c}{RGD} & \multicolumn{4}{c}{PCAL}   & \multicolumn{4}{c}{Landing} & \multicolumn{4}{c}{LSALM}  \bigstrut\\
		\hline
		 &Time(s)& \#Iter & Obj & F-mea & Time(s) & \#Iter & Obj & F-mea &Time(s)& \#Iter & Obj & F-mea &Time(s)& \#Iter & Obj & F-mea \bigstrut\\
      \hline
      No.30 & 1.37 & 1508 & -142.0 & 0.77 & 1.07 & 4849 & -142.0 & 0.77 & 0.90 & 4849 & -142.0 & 0.77 & 0.73 & 2929 & -142.0 & 0.77 \bigstrut[t] \\
      No.60 & 1.42 & 1595 & -135.8 & 0.82 & 1.12 & 5352 & -135.8 & 0.82 & 0.97 & 5351 & -135.8 & 0.82 & 0.79 & 3251 & -135.8 & 0.83              \\
      No.90 & 1.57 & 1747 & -131.8 & 0.84 & 1.33 & 6243 & -131.8 & 0.84 & 1.13 & 6220 & -131.8 & 0.84 & 0.83 & 3429 & -133.5 & 0.91 \bigstrut[b] \\
      \hline
	\end{tabular}}
	\caption{Average Results for Graph Matching}\label{tab:graph_matching}
\end{table}

\section{Closing Remarks}\label{sec:conclusion}

In this paper, we propose the primal-dual algorithm LSALM for solving nonsmooth and nonconvex optimization problems with orthogonality constraints. Unlike Riemannian optimization methods, which typically require retraction operations onto the Stiefel manifold involving complex matrix manipulations, our iterative scheme is simple and relies only on matrix multiplications. We establish both a competitive $\mathcal{O}(\epsilon^{-3})$ iteration complexity for finding $\epsilon$-KKT points and asymptotic convergence guarantees under mild conditions for our proposed method. Beyond effectively handling nonsmooth problems, LSALM also performs competitively in smooth settings compared to various state-of-the-art methods. Parallelizability is naturally aligned with the design of LSALM. Given a suitable nonsmooth separable structure, a parallel implementation can be further explored as an additional inherent advantage compared to the Riemannian framework. Moreover, the technique we employ to ensure feasibility in nonconvex constrained problems may be of independent interest and could potentially be extended to broader problems that exhibit favorable structure near the feasible region.

% \newpage
\bibliographystyle{alpha}
\bibliography{ref}

@article{zhang2020single,
  title={A single-loop smoothed gradient descent-ascent algorithm for nonconvex-concave min-max problems},
  author={Zhang, Jiawei and Xiao, Peijun and Sun, Ruoyu and Luo, Zhiquan},
  journal={Advances in Neural Information Processing Systems},
  volume={33},
  pages={7377--7389},
  year={2020}
}

@inproceedings{yang2022faster,
  title={Faster single-loop algorithms for minimax optimization without strong concavity},
  author={Yang, Junchi and Orvieto, Antonio and Lucchi, Aurelien and He, Niao},
  booktitle={International Conference on Artificial Intelligence and Statistics},
  pages={5485--5517},
  year={2022},
  organization={PMLR}
}

@article{zhang2020proximal,
  title={A proximal alternating direction method of multiplier for linearly constrained nonconvex minimization},
  author={Zhang, Jiawei and Luo, Zhi-Quan},
  journal={SIAM Journal on Optimization},
  volume={30},
  number={3},
  pages={2272--2302},
  year={2020},
  publisher={SIAM}
}

@article{attouch2013convergence,
  title={Convergence of descent methods for semi-algebraic and tame problems: {P}roximal algorithms, forward--backward splitting, and regularized {G}auss--{S}eidel methods},
  author={Attouch, Hedy and Bolte, J{\'e}r{\^o}me and Svaiter, Benar Fux},
  journal={Mathematical Programming},
  volume={137},
  number={1},
  pages={91--129},
  year={2013},
  publisher={Springer}
}

@article{bolte2017error,
  title={From error bounds to the complexity of first-order descent methods for convex functions},
  author={Bolte, J{\'e}r{\^o}me and Nguyen, Trong Phong and Peypouquet, Juan and Suter, Bruce W},
  journal={Mathematical Programming},
  volume={165},
  number={2},
  pages={471--507},
  year={2017},
  publisher={Springer}
}

@article{li2020understanding,
  title={Understanding notions of stationarity in nonsmooth optimization: {A} guided tour of various constructions of subdifferential for nonsmooth functions},
  author={Li, Jiajin and So, Anthony Man-Cho and Ma, Wing-Kin},
  journal={IEEE Signal Processing Magazine},
  volume={37},
  number={5},
  pages={18--31},
  year={2020},
  publisher={IEEE}
}

@article{qian2024error,
  title={Error bound and exact penalty method for optimization problems with nonnegative orthogonal constraint},
  author={Qian, Yitian and Pan, Shaohua and Xiao, Lianghai},
  journal={IMA Journal of Numerical Analysis},
  volume={44},
  number={1},
  pages={120--156},
  year={2024},
  publisher={Oxford University Press}
}

@article{chen2020proximal,
  title={Proximal gradient method for nonsmooth optimization over the {S}tiefel manifold},
  author={Chen, Shixiang and Ma, Shiqian and So, Anthony Man-Cho and Zhang, Tong},
  journal={SIAM Journal on Optimization},
  volume={30},
  number={1},
  pages={210--239},
  year={2020},
  publisher={SIAM}
}

@article{ablin2024infeasible,
  title={Infeasible deterministic, stochastic, and variance-reduction algorithms for optimization under orthogonality constraints},
  author={Ablin, Pierre and Vary, Simon and Gao, Bin and Absil, Pierre-Antoine},
  journal={Journal of Machine Learning Research},
  volume={25},
  number={389},
  pages={1--38},
  year={2024}
}

@inproceedings{ablin2022fast,
  title={Fast and accurate optimization on the orthogonal manifold without retraction},
  author={Ablin, Pierre and Peyr{\'e}, Gabriel},
  booktitle={International Conference on Artificial Intelligence and Statistics},
  pages={5636--5657},
  year={2022},
  organization={PMLR}
}

@article{gao2019parallelizable,
  title={Parallelizable algorithms for optimization problems with orthogonality constraints},
  author={Gao, Bin and Liu, Xin and Yuan, Ya-xiang},
  journal={SIAM Journal on Scientific Computing},
  volume={41},
  number={3},
  pages={A1949--A1983},
  year={2019},
  publisher={SIAM}
}

@article{hosseini2011generalized,
  title={Generalized gradients and characterization of epi-{L}ipschitz sets in {R}iemannian manifolds},
  author={Hosseini, Seyedehsomayeh and Pouryayevali, MR},
  journal={Nonlinear Analysis: Theory, Methods \& Applications},
  volume={74},
  number={12},
  pages={3884--3895},
  year={2011},
  publisher={Elsevier}
}

@article{yang2014optimality,
  title={Optimality conditions for the nonlinear programming problems on {R}iemannian manifolds},
  author={Yang, Wei Hong and Zhang, Lei-Hong and Song, Ruyi},
  journal={Pacific Journal of Optimization},
  volume={10},
  number={2},
  pages={415--434},
  year={2014}
}

@article{li2021weakly,
  title={Weakly convex optimization over {S}tiefel manifold using {R}iemannian subgradient-type methods},
  author={Li, Xiao and Chen, Shixiang and Deng, Zengde and Qu, Qing and Zhu, Zhihui and Man-Cho So, Anthony},
  journal={SIAM Journal on Optimization},
  volume={31},
  number={3},
  pages={1605--1634},
  year={2021},
  publisher={SIAM}
}

@article{chen2016augmented,
  title={An augmented {L}agrangian method for $\ell_1$-regularized optimization problems with orthogonality constraints},
  author={Chen, Weiqiang and Ji, Hui and You, Yanfei},
  journal={SIAM Journal on Scientific Computing},
  volume={38},
  number={4},
  pages={B570--B592},
  year={2016},
  publisher={SIAM}
}

@article{lai2014splitting,
  title={A splitting method for orthogonality constrained problems},
  author={Lai, Rongjie and Osher, Stanley},
  journal={Journal of Scientific Computing},
  volume={58},
  pages={431--449},
  year={2014},
  publisher={Springer}
}

@inproceedings{kovnatsky2016madmm,
  title={{MADMM}: {A} generic algorithm for non-smooth optimization on manifolds},
  author={Kovnatsky, Artiom and Glashoff, Klaus and Bronstein, Michael M},
  booktitle={Computer Vision--ECCV 2016: 14th European Conference, Amsterdam, The Netherlands, October 11-14, 2016, Proceedings, Part V 14},
  pages={680--696},
  year={2016},
  organization={Springer}
}

@article{li2025riemannian,
  title={A {R}iemannian alternating direction method of multipliers},
  author={Li, Jiaxiang and Ma, Shiqian and Srivastava, Tejes},
  journal={Mathematics of Operations Research},
  volume={50},
  number={4},
  pages={3222--3242},
  year={2025},
  publisher={INFORMS}
}

@article{bolte2014proximal,
  title={Proximal alternating linearized minimization for nonconvex and nonsmooth problems},
  author={Bolte, J{\'e}r{\^o}me and Sabach, Shoham and Teboulle, Marc},
  journal={Mathematical Programming},
  volume={146},
  number={1},
  pages={459--494},
  year={2014},
  publisher={Springer}
}

@article{huang2022riemannian,
  title={Riemannian proximal gradient methods},
  author={Huang, Wen and Wei, Ke},
  journal={Mathematical Programming},
  volume={194},
  number={1},
  pages={371--413},
  year={2022},
  publisher={Springer}
}

@article{deng2023manifold,
  title={A manifold inexact augmented {L}agrangian method for nonsmooth optimization on {R}iemannian submanifolds in {E}uclidean space},
  author={Deng, Kangkang and Peng, Zheng},
  journal={IMA Journal of Numerical Analysis},
  volume={43},
  number={3},
  pages={1653--1684},
  year={2023},
  publisher={Oxford University Press}
}

@article{zhou2023semismooth,
  title={A semismooth {N}ewton based augmented {L}agrangian method for nonsmooth optimization on matrix manifolds},
  author={Zhou, Yuhao and Bao, Chenglong and Ding, Chao and Zhu, Jun},
  journal={Mathematical Programming},
  volume={201},
  number={1},
  pages={1--61},
  year={2023},
  publisher={Springer}
}

@article{vandereycken2013low,
  title={Low-rank matrix completion by {R}iemannian optimization},
  author={Vandereycken, Bart},
  journal={SIAM Journal on Optimization},
  volume={23},
  number={2},
  pages={1214--1236},
  year={2013},
  publisher={SIAM}
}

@article{jolliffe2016principal,
  title={Principal component analysis: {A} review and recent developments},
  author={Jolliffe, Ian T and Cadima, Jorge},
  journal={Philosophical Transactions of the Royal Society A: Mathematical, Physical and Engineering Sciences},
  volume={374},
  number={2065},
  pages={20150202},
  year={2016},
  publisher={the Royal Society publishing}
}

@article{journee2010generalized,
  title={Generalized power method for sparse principal component analysis.},
  author={Journ{\'e}e, Michel and Nesterov, Yurii and Richt{\'a}rik, Peter and Sepulchre, Rodolphe},
  journal={Journal of Machine Learning Research},
  volume={11},
  number={2},
  year={2010}
}

@article{wang2023linear,
  title={Linear Convergence of a Proximal Alternating Minimization Method with Extrapolation for $\ell_1$-Norm Principal Component Analysis},
  author={Wang, Peng and Liu, Huikang and So, Anthony Man-Cho},
  journal={SIAM Journal on Optimization},
  volume={33},
  number={2},
  pages={684--712},
  year={2023},
  publisher={SIAM}
}

@article{liu2023unified,
  title={A unified approach to synchronization problems over subgroups of the orthogonal group},
  author={Liu, Huikang and Yue, Man-Chung and So, Anthony Man-Cho},
  journal={Applied and Computational Harmonic Analysis},
  volume={66},
  pages={320--372},
  year={2023},
  publisher={Elsevier}
}

@article{zhu2023rotation,
  title={Rotation group synchronization via quotient manifold},
  author={Zhu, Linglingzhi and Li, Chong and So, Anthony Man-Cho},
  journal={arXiv preprint arXiv:2306.12730},
  year={2023}
}

@article{ling2022near,
  title={Near-optimal performance bounds for orthogonal and permutation group synchronization via spectral methods},
  author={Ling, Shuyang},
  journal={Applied and Computational Harmonic Analysis},
  volume={60},
  pages={20--52},
  year={2022},
  publisher={Elsevier}
}

@article{cherian2016riemannian,
  title={Riemannian dictionary learning and sparse coding for positive definite matrices},
  author={Cherian, Anoop and Sra, Suvrit},
  journal={IEEE Transactions on Neural Networks and Learning Systems},
  volume={28},
  number={12},
  pages={2859--2871},
  year={2016},
  publisher={IEEE}
}

@article{sun2016completeI,
  title={Complete dictionary recovery over the sphere {I}: Overview and the geometric picture},
  author={Sun, Ju and Qu, Qing and Wright, John},
  journal={IEEE Transactions on Information Theory},
  volume={63},
  number={2},
  pages={853--884},
  year={2016},
  publisher={IEEE}
}

@article{keshavan10matrix,
  author  = {Raghunandan H. Keshavan and Andrea Montanari and Sewoong Oh},
  title   = {Matrix Completion from  Noisy Entries},
  journal = {Journal of Machine Learning Research},
  year    = {2010},
  volume  = {11},
  number  = {69},
  pages   = {2057-2078},
  url     = {http://jmlr.org/papers/v11/keshavan10a.html}
}

@book{absil2009optimization,
  title={Optimization Algorithms on Matrix Manifolds},
  author={Absil, P-A and Mahony, Robert and Sepulchre, Rodolphe},
  year={2009},
  publisher={Princeton University Press}
}

@book{boumal2023introduction,
  title={An Introduction to Optimization on Smooth Manifolds},
  author={Boumal, Nicolas},
  year={2023},
  publisher={Cambridge University Press}
}

@article{beck2023dynamic,
  title={A dynamic smoothing technique for a class of nonsmooth optimization problems on manifolds},
  author={Beck, Amir and Rosset, Israel},
  journal={SIAM Journal on Optimization},
  volume={33},
  number={3},
  pages={1473--1493},
  year={2023},
  publisher={SIAM}
}

@article{peng2023riemannian,
  title={Riemannian smoothing gradient type algorithms for nonsmooth optimization problem on compact {R}iemannian submanifold embedded in {E}uclidean space},
  author={Peng, Zheng and Wu, Weihe and Hu, Jiang and Deng, Kangkang},
  journal={Applied Mathematics \& Optimization},
  volume={88},
  number={3},
  pages={85},
  year={2023},
  publisher={Springer}
}

@book{bertsekas2014constrained,
  title={Constrained Optimization and Lagrange Multiplier Methods},
  author={Bertsekas, Dimitri P},
  year={2014},
  publisher={Academic Press}
}

@article{rockafellar1976augmented,
  title={Augmented {L}agrangians and applications of the proximal point algorithm in convex programming},
  author={Rockafellar, R Tyrrell},
  journal={Mathematics of Operations Research},
  volume={1},
  number={2},
  pages={97--116},
  year={1976},
  publisher={INFORMS}
}

@article{zhang2022global,
  title={A global dual error bound and its application to the analysis of linearly constrained nonconvex optimization},
  author={Zhang, Jiawei and Luo, Zhi-Quan},
  journal={SIAM Journal on Optimization},
  volume={32},
  number={3},
  pages={2319--2346},
  year={2022},
  publisher={SIAM}
}

@article{li2025nonsmooth,
  author  = {Li, Jiajin and Zhu, Linglingzhi and So, Anthony Man-Cho},
  title   = {Nonsmooth Nonconvex-Nonconcave Minimax Optimization: Primal-Dual Balancing and Iteration Complexity Analysis},
  journal = {Mathematical Programming},
  volume  = {214},
  number  = {1},
  pages   = {591--641},
  year    = {2025}
}

@inproceedings{lu2022single,
  title={A single-loop gradient descent and perturbed ascent algorithm for nonconvex functional constrained optimization},
  author={Lu, Songtao},
  booktitle={International Conference on Machine Learning},
  pages={14315--14357},
  year={2022},
  organization={PMLR}
}

@article{koshal2011multiuser,
  title={Multiuser optimization: Distributed algorithms and error analysis},
  author={Koshal, Jayash and Nedi{\'c}, Angelia and Shanbhag, Uday V},
  journal={SIAM Journal on Optimization},
  volume={21},
  number={3},
  pages={1046--1081},
  year={2011},
  publisher={SIAM}
}

@article{hajinezhad2019perturbed,
  title={Perturbed proximal primal--dual algorithm for nonconvex nonsmooth optimization},
  author={Hajinezhad, Davood and Hong, Mingyi},
  journal={Mathematical Programming},
  volume={176},
  number={1},
  pages={207--245},
  year={2019},
  publisher={Springer}
}

@article{bolte2007lojasiewicz,
  title={The {{\L}}ojasiewicz inequality for nonsmooth subanalytic functions with applications to subgradient dynamical systems},
  author={Bolte, J{\'e}r{\^o}me and Daniilidis, Aris and Lewis, Adrian},
  journal={SIAM Journal on Optimization},
  volume={17},
  number={4},
  pages={1205--1223},
  year={2007},
  publisher={SIAM}
}

@article{hu2024constraint,
  title={A constraint dissolving approach for nonsmooth optimization over the {S}tiefel manifold},
  author={Hu, Xiaoyin and Xiao, Nachuan and Liu, Xin and Toh, Kim-Chuan},
  journal={IMA Journal of Numerical Analysis},
  volume={44},
  number={6},
  pages={3717--3748},
  year={2024},
  publisher={Oxford University Press}
}

@article{zhou2015factorized,
  title={Factorized graph matching},
  author={Zhou, Feng and De la Torre, Fernando},
  journal={IEEE Transactions on Pattern Analysis and Machine Intelligence},
  volume={38},
  number={9},
  pages={1774--1789},
  year={2015},
  publisher={IEEE}
}

@inproceedings{saxe2014exact,
  title={Exact solutions to the nonlinear dynamics of learning in deep linear neural networks},
  author={Saxe, Andrew M. and McClelland, James L. and Ganguli, Surya},
  booktitle={International Conference on Learning Representations},
  year={2014}
}

@inproceedings{arjovsky2016unitary,
  title={Unitary evolution recurrent neural networks},
  author={Arjovsky, Martin and Shah, Amar and Bengio, Yoshua},
  booktitle={Proceedings of the 33rd International Conference on Machine Learning},
  pages={1120--1128},
  year={2016},
  organization={PMLR}
}

@article{kingma2018glow,
  title     = {Glow: Generative Flow with Invertible 1x1 Convolutions},
  author    = {Kingma, Diederik P. and Dhariwal, Prafulla},
  journal = {Advances in Neural Information Processing Systems},
  volume={31},
  pages     = {10236--10245},
  year      = {2018}
}

@article{xu2025oracle,
  title={On the oracle complexity of a Riemannian inexact augmented Lagrangian method for nonsmooth composite problems over Riemannian submanifolds: M. Xu et al.},
  author={Xu, Meng and Jiang, Bo and Liu, Ya-Feng and So, Anthony Man-Cho},
  journal={Optimization Letters},
  pages={1--19},
  year={2025},
  publisher={Springer}
}

@article{deng2025oracle,
  title={Oracle Complexities of Augmented Lagrangian Methods for Nonsmooth Composite Optimization on a Compact Submanifold},
  author={Deng, Kangkang and Hu, Jiang and Wu, Jiayuan and Wen, Zaiwen},
  journal={Mathematics of Operations Research},
  year={2025},
  publisher={INFORMS}
}
\appendix 
% \newpage

\section{Subdifferentials and Embedded Geometry}\label{sec:Riesubdiff}

Let $f:\mathbb{R}^{m\times n}\rightarrow \mathbb{R}$ and denote $f_{\mathcal{M}}:=f|_{\mathcal{M}}$ for simplicity. Suppose that $f_{\mathcal{M}}$ is differentiable, then the Riemannian gradient of $f_{\mathcal{M}}$ is a vector field $\operatorname{grad} f_{\mathcal{M}}$ as  the unique element  in $\operatorname{T}_{\bX}\mathcal{M}$ for any $\bX \in \mathcal{M}$ such that
\[
\langle\grad f_{\mathcal{M}}(\bX), \bm{\xi_{X}}\rangle_{\bX}=\operatorname{D} f_{\mathcal{M}}(\bm{X})\left[\bm{\xi_{X}}\right]\quad \text{for each}\ \bm{\xi_{X}} \in \operatorname{T}_{\bm{X}} \mathcal{M},
\]
where $\operatorname{D}f_{\mathcal{M}}$ is the differential of the function $f_{\mathcal{M}}$. For a smooth function $f$, we know that $\operatorname{grad} f_{\mathcal{M}}(\bX)=\operatorname{proj}_{\operatorname{T}_{\bX} \mathcal{M}} (\nabla f(\bX))$ by the given Riemannian metric. When $f_{\mathcal{M}}$ is not necessary smooth, we discuss the following directional derivative \cite{hosseini2011generalized} and subdifferentials.

\begin{defi}[Clarke Directional Derivative \& Subdifferential] For a locally Lipschitz function and lower semicontinuous function $f_{\mathcal{M}}$ on $\mathcal{M}$, the Riemannian Clarke directional derivative of $f_{\mathcal{M}}$ at $\bX \in \mathcal{M}$ in the direction $\bm{d}$ is defined by
$$
f_{\mathcal{M}}^{\circ}(\bX, \bm{d})=\limsup _{\bY \rightarrow \bX, t \rightarrow 0} \frac{f_{\mathcal{M}} \circ \varphi^{-1}(\varphi(\bY)+t \operatorname{D} \varphi(\bX)[\bm{d}])-f_{\mathcal{M}} \circ \varphi^{-1}(\varphi(\bY))}{t},
$$
where $(\varphi, U)$ is a coordinate chart at $\bX$. The Clarke subdifferential of $f_{\mathcal{M}}$ at $\bX \in \mathcal{M}$, denoted by $\partial_C f_{\mathcal{M}}(\bX)$, is given by
$$
\partial_C f_{\mathcal{M}}(\bX)=\left\{\bm{v} \in \mathrm{T}_{\bX} \mathcal{M}:\langle \bm{v}, \bm{d}\rangle \leq f_{\mathcal{M}}^{\circ}(\bX, \bm{d}),\ \text{for each}\ \bm{d} \in \mathrm{T}_{\bm{X}} \mathcal{M}\right\} .
$$
\end{defi}
It can be further characterized by the projection of Clarke subdifferential in the ambient space under certain regular condition holds.
\begin{fact}[{\cite[Theorem 5.1]{yang2014optimality}}]
\label{fact:proj-sub}
The inclusion $\partial_C f_{\mathcal{M}}(\bX)\subseteq\operatorname{proj}_{\operatorname{T}_{\bX} \mathcal{M}}(\partial_C f(\bX))$ holds. Suppose that for all $\bm{d} \in \mathrm{T}_{\bX} \mathcal{M}$,
\begin{equation}\label{eq:regular-euc}
f'(\bX,\bm{d}):=\lim _{t \rightarrow 0} \frac{f(\bX+t \bm{d})-f(\bX)}{t}=\limsup _{\bY \rightarrow \bX, t \rightarrow 0} \frac{f (\bY+t \bm{d})-f (\bY)}{t}=f_{\mathcal{M}}^{\circ}(\bX, \bm{d}).
\end{equation}
Then $\partial_C f_{\mathcal{M}}(\bX)=\operatorname{proj}_{\operatorname{T}_{\bX} \mathcal{M}}(\partial_C f(\bX))$.
\end{fact}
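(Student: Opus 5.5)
The plan is to compare support functions on the tangent space $\operatorname{T}_{\bX}\mathcal{M}$. Both $\partial_C f_{\mathcal{M}}(\bX)$ and $\operatorname{proj}_{\operatorname{T}_{\bX}\mathcal{M}}(\partial_C f(\bX))$ are nonempty, convex and compact subsets of $\operatorname{T}_{\bX}\mathcal{M}$. The first is the subdifferential of the sublinear function $f^{\circ}_{\mathcal{M}}(\bX,\cdot)$; the second is the linear image of the compact convex set $\partial_C f(\bX)$, using local Lipschitzness of $f$ in the ambient space. Two such sets are determined by their support functions restricted to $\operatorname{T}_{\bX}\mathcal{M}$. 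For any $\bm{d}\in\operatorname{T}_{\bX}\mathcal{M}$ and any $\bm{w}$ we have $\langle \bm{w},\bm{d}\rangle=\langle \operatorname{proj}_{\operatorname{T}_{\bX}\mathcal{M}}\bm{w},\bm{d}\rangle$. Hence
\[
\sigma_{\operatorname{proj}_{\operatorname{T}_{\bX}\mathcal{M}}(\partial_C f(\bX))}(\bm{d})=\sigma_{\partial_C f(\bX)}(\bm{d})=f^{\circ}(\bX,\bm{d}),
\]
where $f^{\circ}$ is the ambient Clarke directional derivative, i.e. the $\limsup$ appearing in \eqref{eq:regular-euc}.

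\textbf{Step 1 (key inequality).} I will show $f^{\circ}_{\mathcal{M}}(\bX,\bm{d})\le f^{\circ}(\bX,\bm{d})$ for every $\bm{d}\in\operatorname{T}_{\bX}\mathcal{M}$. Take a chart $(\varphi,U)$ and set $\psi:=\varphi^{-1}$, a $C^1$ map into $\mathbb{R}^{m\times n}$ because $\mathcal{M}$ is an embedded submanifold. Write $\bm{u}=\varphi(\bY)$ and $\bm{v}=\operatorname{D}\varphi(\bX)[\bm{d}]$. Then
\[
f_{\mathcal{M}}\circ\varphi^{-1}(\bm{u}+t\bm{v})=f\big(\bY+t\operatorname{D}\psi(\bm{u})[\bm{v}]+r(\bm{u},t)\big),
\]
where $\|r(\bm{u},t)\|_F=o(t)$ uniformly for $\bm{u}$ near $\varphi(\bX)$. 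This uniformity follows from continuity of $\operatorname{D}\psi$ via the mean value theorem. Moreover $\operatorname{D}\psi(\bm{u})[\bm{v}]\to\operatorname{D}\psi(\varphi(\bX))[\bm{v}]=\bm{d}$ as $\bY\to\bX$.

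Using the local Lipschitz constant $L_f$ of $f$ near $\bX$, the numerator is bounded by $f(\bY+t\bm{d})-f(\bY)+L_f\big(t\|\operatorname{D}\psi(\bm{u})[\bm{v}]-\bm{d}\|_F+o(t)\big)$. Dividing by $t$ and taking $\limsup$ gives the inequality. I expect this uniform control of the chart's second-order remainder to be the main technical obstacle. The rest is convex analysis.

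\textbf{Step 2 (inclusion).} Let $\bm{v}\in\partial_C f_{\mathcal{M}}(\bX)$. Step 1 gives $\langle\bm{v},\bm{d}\rangle\le f^{\circ}(\bX,\bm{d})=\sigma_{\operatorname{proj}_{\operatorname{T}_{\bX}\mathcal{M}}(\partial_C f(\bX))}(\bm{d})$ for all $\bm{d}\in\operatorname{T}_{\bX}\mathcal{M}$. Suppose $\bm{v}$ were not in the projected set. Since that set is closed, convex and contained in the Euclidean space $\operatorname{T}_{\bX}\mathcal{M}$, strict separation within $\operatorname{T}_{\bX}\mathcal{M}$ gives a direction $\bm{d}$ violating this bound. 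Hence $\partial_C f_{\mathcal{M}}(\bX)\subseteq\operatorname{proj}_{\operatorname{T}_{\bX}\mathcal{M}}(\partial_C f(\bX))$.

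\textbf{Step 3 (equality).} Under \eqref{eq:regular-euc}, $f^{\circ}_{\mathcal{M}}(\bX,\cdot)=f^{\circ}(\bX,\cdot)$ on $\operatorname{T}_{\bX}\mathcal{M}$. The function $f^{\circ}_{\mathcal{M}}(\bX,\cdot)$ is finite, sublinear and continuous, so by standard Clarke theory it is the support function of $\partial_C f_{\mathcal{M}}(\bX)$. The two compact convex sets therefore have identical support functions on $\operatorname{T}_{\bX}\mathcal{M}$. By the bijection between closed convex sets and their support functions, $\partial_C f_{\mathcal{M}}(\bX)=\operatorname{proj}_{\operatorname{T}_{\bX}\mathcal{M}}(\partial_C f(\bX))$.
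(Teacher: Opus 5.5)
Your proof is correct. The paper gives no argument of its own for this Fact; it simply imports it from Theorem 5.1 of Yang--Zhang--Song, so your proposal works as a self-contained justification. Step 1 is in effect Clarke's chain rule for the composition $f\circ\varphi^{-1}$ with a $C^1$ parametrization: the chart remainder is $o(t)$ uniformly, and ambient Lipschitzness absorbs it. Steps 2--3 are the usual separation and support-function argument. This is the natural route to such statements, and your details (uniform remainder, $\operatorname{D}\varphi^{-1}(\varphi(\bX))[\operatorname{D}\varphi(\bX)[\bm{d}]]=\bm{d}$, restricting the ambient $\limsup$ to $\bY\in\mathcal{M}$, separation inside $\operatorname{T}_{\bX}\mathcal{M}$) all check out.

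Two small remarks.

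First, Step 1, and the meaning of $\partial_C f(\bX)$ itself, require $f$ to be locally Lipschitz around $\bX$ in the ambient space. The paper's definition only assumes this for $f_{\mathcal{M}}$ on $\mathcal{M}$. You state the ambient assumption explicitly, and it holds in the paper's setting $f=\ell+g$.

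Second, Step 3 does not need the fact that $f^{\circ}_{\mathcal{M}}(\bX,\cdot)$ is the support function of $\partial_C f_{\mathcal{M}}(\bX)$. Let $P:=\operatorname{proj}_{\operatorname{T}_{\bX}\mathcal{M}}(\partial_C f(\bX))$. Once $f^{\circ}_{\mathcal{M}}(\bX,\cdot)=\sigma_P$ on $\operatorname{T}_{\bX}\mathcal{M}$, the definition gives $\partial_C f_{\mathcal{M}}(\bX)=\{\bm{v}\in\operatorname{T}_{\bX}\mathcal{M}:\langle\bm{v},\bm{d}\rangle\le\sigma_P(\bm{d})\ \forall\,\bm{d}\in\operatorname{T}_{\bX}\mathcal{M}\}$. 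This set contains $P$ trivially and is contained in $P$ by your Step 2.

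In addition, take $\bY=\bX$ in the chart $\limsup$ and use the same expansion as in Step 1. This gives $f'(\bX,\bm{d})\le f^{\circ}_{\mathcal{M}}(\bX,\bm{d})\le f^{\circ}(\bX,\bm{d})$ for $\bm{d}\in\operatorname{T}_{\bX}\mathcal{M}$. Hence the hypothesis can be weakened to tangential Clarke regularity, $f'(\bX,\bm{d})=f^{\circ}(\bX,\bm{d})$; the last equality in the stated hypothesis then follows automatically.
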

From \cite[Lemma 5.1]{yang2014optimality} (extended to the weakly convex case), we know that $f$ in problem \eqref{eq:problem_composite} satisfies \eqref{eq:regular-euc}, and consequently $\partial_C f_{\mathcal{M}}(\bX)=\operatorname{proj}_{\operatorname{T}_{\bX} \mathcal{M}}(\partial f(\bX))$ since $\partial f(\bX)=\partial f_C(\bX)$ from \cite[Fact 5]{li2020understanding}.

\section {Useful Technical Lemmas} \label{sec:lemmas}

We introduce several useful perturbation error bounds in this section. First, under the assumptions of problem~\eqref{eq:problem_composite}, we directly obtain the following results.

\begin{fact}\label{fact-2}
Let $\bY\in \mathcal{Y}$. For all $\bX, \bar{\bX} \in\mathcal{X}$ it follows that
\[
\frac{- L_{\rho} - \lambda^{-1}}{2}\|\bX-\bar{\bX}\|_F^{2} \leq \mathcal{L}_{\rho}(\bX,\bY)-\mathcal{L}_{\bar{\bX},\lambda}(\bX, \bY) \leq \frac{L_{\rho}-\lambda^{-1}}{2}\|\bX-\bar{\bX}\|_F^{2}.
\]
\end{fact}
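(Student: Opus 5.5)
The plan is to write the difference as a Bregman-type remainder of the smooth part of the augmented Lagrangian, and then apply the descent lemma with the constant $L_\rho$ from Lemma~\ref{lemma:laglipschitz}. Fix $\bY\in\mathcal{Y}$ and set $h(\bX):=\mathcal{L}_{\rho}(\bX,\bY)-g(\bX)=\ell(\bX)+\langle \bY,\bX^\top\bX-\bI_n\rangle+\frac{\rho}{2}\|\bX^\top\bX-\bI_n\|_F^2$.

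First I will check that $\mathcal{L}_{\bar{\bX},\lambda}(\bX,\bY)$ is exactly $g(\bX)$ plus the first-order Taylor model of $h$ at $\bar{\bX}$, plus the proximal term. The terms $\ell(\bar\bX)+\langle\bY,G(\bar\bX)\rangle+\frac{\rho}{2}\|G(\bar\bX)\|_F^2$ equal $h(\bar\bX)$. The linear terms $\langle\nabla\ell(\bar\bX)+2\bar\bX(\bY+\rho G(\bar\bX)),\bX-\bar\bX\rangle$ equal $\langle\nabla h(\bar\bX),\bX-\bar\bX\rangle$, by the gradient formula in the proof of Lemma~\ref{lemma:laglipschitz}; here I use that $\bY$ is symmetric, so $2\bY\bX$ and $2\bX\bY$ agree in that formula. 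The $g(\bX)$ terms cancel, which gives
\[
\mathcal{L}_{\rho}(\bX,\bY)-\mathcal{L}_{\bar{\bX},\lambda}(\bX,\bY)=\bigl(h(\bX)-h(\bar\bX)-\langle\nabla h(\bar\bX),\bX-\bar\bX\rangle\bigr)-\tfrac{1}{2\lambda}\|\bX-\bar\bX\|_F^2 .
\]

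Second, I will bound the bracket in absolute value by $\frac{L_\rho}{2}\|\bX-\bar\bX\|_F^2$. I intend to use the integral form $\int_0^1\langle\nabla h(\bar\bX+t(\bX-\bar\bX))-\nabla h(\bar\bX),\bX-\bar\bX\rangle\,dt$ together with the $L_\rho$-Lipschitz continuity of $\nabla h$ from Lemma~\ref{lemma:laglipschitz}, using $\|\bY\|\le\|\bY\|_F\le R_{\bY}$. Subtracting $\frac{1}{2\lambda}\|\bX-\bar\bX\|_F^2$ from both sides of this two-sided bound then gives exactly the two stated inequalities.

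The main obstacle is that the descent lemma requires the Lipschitz bound along the whole segment $[\bar\bX,\bX]$, whereas $\mathcal{X}$ is not assumed convex. For the quartic and bilinear parts this is harmless. The estimates in Lemma~\ref{lemma:laglipschitz} only use operator-norm bounds on the points, and $\|\bar\bX+t(\bX-\bar\bX)\|\le\max\{\|\bX\|,\|\bar\bX\|\}\le R^{\operatorname{op}}_{\bX}$ by convexity of the norm, so the same constant works on the segment. For $\nabla\ell$, I will either take $\mathcal{X}$ convex (e.g.\ a box or an operator-norm ball, as in the remarks) or read Assumption~\ref{ass:basic} as holding on the convex hull of $\mathcal{X}$. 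Either way the segment argument goes through.
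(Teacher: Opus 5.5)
Your proposal is correct and is exactly the argument the paper has in mind: the paper gives no proof and simply calls this fact a direct consequence of the $L_\rho$-Lipschitz gradient of $\mathcal{L}_{\rho}(\cdot,\bY)-g$ from Lemma~\ref{lemma:laglipschitz}, i.e., the two-sided descent lemma after observing that $\mathcal{L}_{\bar{\bX},\lambda}(\cdot,\bY)$ is $g$ plus the first-order model of $\mathcal{L}_{\rho}(\cdot,\bY)-g$ at $\bar{\bX}$ plus $\frac{1}{2\lambda}\|\bX-\bar{\bX}\|_F^2$. Your treatment of the segment $[\bar{\bX},\bX]$ is a worthwhile addition, since the paper tacitly takes $\mathcal{X}$ convex, as its uniqueness claims for the argmin and prox already require. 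One wording fix: the gradient of $\bX\mapsto\langle\bY,\bX^\top\bX\rangle$ is $\bX(\bY+\bY^\top)=2\bX\bY$ by symmetry of $\bY$, and the paper's $2\bY\bX$ is a typo rather than something that agrees with $2\bX\bY$.
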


The following Lipschitz error bound in Lemmas \ref{lemma-sollip} and \ref{lemma-dualdiff} are important in our analysis, and the proof of which is similar to that of Lemmas B.2 and B.3 in \cite{zhang2020single}, respectively. For the sake of completeness, we present the proof here.
\begin{lemma}\label{lemma-sollip}
For any $\bY, \bY^{\prime} \in \mathcal{Y}$ and $\bZ, \bZ^{\prime} \in \R^{m\times n}$, the following inequalities hold:
    \begin{align}
        &\|\bX(\bY, \bZ)-\bX(\bY, \bZ^{\prime})\|_F \leq \sigma_{1}\|\bZ-\bZ^{\prime}\|_F, \label{lip-z}\\
        &\|\bX(\bZ)-\bX(\bZ^{\prime})\|_F \leq \sigma_{1}\|\bZ-\bZ^{\prime}\|_F, \label{lip-starz}\\
        &\|\bX(\bY, \bZ)-\bX(\bY^{\prime}, \bZ)\|_F \leq \sigma_{2}\|\bY-\bY^{\prime}\|_F, \label{lip-y}
        \end{align}
     where $\sigma_{1}:=\frac{r}{r-\mu_{\rho}}$ and $\sigma_{2}=\frac{2R^{\operatorname{op}}_{\bX}}{r-\mu_{\rho}}$.
    \end{lemma}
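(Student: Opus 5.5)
We prove the three bounds by the strong convexity argument standard for smoothed minimax problems (cf. Lemma B.2 in \cite{zhang2020single}). The basic observation is that for every $\bY\in\mathcal{Y}$ and $\bZ$, the map $\bX\mapsto F(\bX,\bY,\bZ)+\iota_{\mathcal{X}}(\bX)$ is $(r-\mu_\rho)$-strongly convex. Indeed, Lemma~\ref{lem:lagweakcvx} makes $\mathcal{L}_\rho(\cdot,\bY)$ $\mu_\rho$-weakly convex on $\mathcal{X}$, the term $-\frac{\varepsilon}{2}\|\bY\|_F^2$ does not depend on $\bX$, and $\mathcal{X}$ is convex (we assume this, as needed for the projection/prox to be single-valued). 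Hence $\bX(\bY,\bZ)$ is unique. Its optimality condition reads $\bz\in\partial_{\bX}\mathcal{L}_\rho(\bX(\bY,\bZ),\bY)+r(\bX(\bY,\bZ)-\bZ)+N_{\mathcal{X}}(\bX(\bY,\bZ))$. Strong monotonicity of the subdifferential of a strongly convex function then gives, for two optimality conditions, an inner-product inequality of the form $(r-\mu_\rho)\|\bX-\bX'\|_F^2\le\langle \text{perturbation},\bX-\bX'\rangle$.

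For \eqref{lip-z}, write $\bX=\bX(\bY,\bZ)$ and $\bX'=\bX(\bY,\bZ')$. The subgradient $-r(\bX-\bZ)$ of $\mathcal{L}_\rho(\cdot,\bY)+\iota_{\mathcal{X}}$ at $\bX$ and the corresponding one at $\bX'$ satisfy the $-\mu_\rho$ hypomonotonicity inequality:
\[
\langle -r(\bX-\bZ)+r(\bX'-\bZ'),\bX-\bX'\rangle\ge-\mu_\rho\|\bX-\bX'\|_F^2 .
\]
Rearranging gives $(r-\mu_\rho)\|\bX-\bX'\|_F^2\le r\langle\bZ-\bZ',\bX-\bX'\rangle$, and Cauchy--Schwarz yields $\sigma_1=r/(r-\mu_\rho)$.

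For \eqref{lip-y}, the roles are reversed: $\bZ$ is fixed and $\bY$ varies. The only $\bY$-dependent part of $\partial_{\bX}F$ is $2\bX\bY$. Using strong convexity of $F(\cdot,\bY,\bZ)$, we compare the optimality conditions at $\bX=\bX(\bY,\bZ)$ and $\bX'=\bX(\bY',\bZ)$, where the second condition is rewritten as a subgradient of $F(\cdot,\bY,\bZ)+\iota_{\mathcal{X}}$ at $\bX'$ plus the correction $2\bX'(\bY'-\bY)$. This gives
\[
(r-\mu_\rho)\|\bX-\bX'\|_F^2\le\langle 2\bX'(\bY-\bY'),\bX-\bX'\rangle\le 2\|\bX'\|\,\|\bY-\bY'\|_F\|\bX-\bX'\|_F .
\]
Bounding $\|\bX'\|\le R^{\operatorname{op}}_{\bX}$ via Assumption~\ref{ass:xcompact} then gives $\sigma_2$.

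The main obstacle is \eqref{lip-starz}, since $\bX(\bZ)=\bX(\bY(\bZ),\bZ)$ involves the dual maximizer $\bY(\bZ)$, which is only Hölder/Lipschitz in a weaker sense. Following \cite{zhang2020single}, the route avoids the dual map. The function $d(\cdot,\bZ)$ is $\varepsilon$-strongly concave and $F$ is convex-concave in $(\bX,\bY)$ on $\mathcal{X}\times\mathcal{Y}$, so Sion's theorem provides a saddle point. Consequently $\bX(\bZ)$ is the unique minimizer of the strongly convex function $\bX\mapsto\max_{\bY\in\mathcal{Y}}F(\bX,\bY,\bZ)$. This function equals $\psi(\bX)+\frac{r}{2}\|\bX-\bZ\|_F^2$, where $\psi:=\max_{\bY}\{\mathcal{L}_\rho(\cdot,\bY)-\frac{\varepsilon}{2}\|\bY\|_F^2\}$ is $\mu_\rho$-weakly convex, being a supremum of $\mu_\rho$-weakly convex functions. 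Repeating the \eqref{lip-z} argument with $\psi+\iota_{\mathcal{X}}$ in place of $\mathcal{L}_\rho(\cdot,\bY)+\iota_{\mathcal{X}}$ yields the same constant $\sigma_1$. The point to check carefully is the concavity in $\bY$ (linear plus $-\frac{\varepsilon}{2}\|\bY\|_F^2$) together with compactness, which is what justifies the minimax interchange so that the minimizer of the max-function coincides with $\bX(\bY(\bZ),\bZ)$.
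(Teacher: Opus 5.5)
Your proposal is correct and is essentially the paper's argument. For \eqref{lip-z} and \eqref{lip-starz} the paper simply imports the standard hypomonotonicity and minimax-interchange argument of \cite[Lemma 2]{li2025nonsmooth} (after \cite{zhang2020single}), which is what you reconstruct; for \eqref{lip-y} it runs the same two-minimizer strong-convexity comparison, but in function-value form. That gives $(r-\mu_\rho)\|\bX-\bX'\|_F^2\le\langle G(\bX)-G(\bX'),\bY'-\bY\rangle$, and the paper then uses the $2R^{\operatorname{op}}_{\bX}$-Lipschitz continuity of $G$ on $\mathcal{X}$ where you bound $\|2\bX'(\bY'-\bY)\|_F\le 2R^{\operatorname{op}}_{\bX}\|\bY-\bY'\|_F$, so both routes yield the same $\sigma_2$. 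The only slip is a sign: strong monotonicity actually gives $(r-\mu_\rho)\|\bX-\bX'\|_F^2\le\langle 2\bX'(\bY'-\bY),\bX-\bX'\rangle$, which is harmless because you immediately pass to absolute values.
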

\begin{proof}
The proof of \eqref{lip-z} and \eqref{lip-starz} can be found in \cite[Lemma 2]{li2025nonsmooth}. Now, we start to prove \eqref{lip-y}. By the $( r-\mu_{\rho} )$-strong convexity of $ F(\cdot, \bY, \bZ) $ and the definition of $ \bX(\cdot, \cdot) $, we have that
\begin{align}
&F(\bX(\bY^{\prime}, \bZ),\bY, \bZ)-F(\bX(\bY, \bZ), \bY,\bZ) \geq \frac{r- \mu_{\rho}}{2}\cdot\|\bX(\bY^{\prime}, \bZ)-\bX(\bY, \bZ)\|_F^{2},\label{psi-strongcvxineq1} \\
&F(\bX(\bY, \bZ), \bY^{\prime},\bZ)-F(\bX(\bY^{\prime}, \bZ), \bY^{\prime},\bZ) \geq \frac{r- \mu_{\rho}}{2}\cdot\|\bX(\bY, \bZ)-\bX(\bY^{\prime}, \bZ)\|_F^{2}\label{psi-strongcvxineq2}.
\end{align}
Moreover, by the  $\varepsilon$-strongly concavity of $F(\bX, \cdot, \bZ)$ we have
\begin{align}
& F(\bX(\bY, \bZ), \bY^{\prime},\bZ)-F(\bX(\bY, \bZ), \bY,\bZ)\leq \langle G(\bX(\bY, \bZ))-\varepsilon\bY, \bY^{\prime}-\bY\rangle-\frac{\varepsilon}{2}\|\bY-\bY^\prime\|_F^2, \label{psi-yconcave}\\
& F(\bX(\bY^{\prime}, \bZ), \bY, \bZ)-F(\bX(\bY^{\prime}, \bZ), \bY^{\prime},\bZ) \leq \langle G(\bX(\bY^{\prime}, \bZ))-\varepsilon\bY^\prime, \bY-\bY^{\prime}\rangle-\frac{\varepsilon}{2}\|\bY-\bY^\prime\|_F^2. \label{psi-ylip}
\end{align}
Combining \eqref{psi-strongcvxineq1}-\eqref{psi-ylip} it follows that
\begin{align*}
&(r-\mu_{\rho})\|\bX(\bY, \bZ)-\bX(\bY^{\prime}, \bZ)\|_F^{2}
\leq\langle G(\bX(\bY, \bZ))-G(\bX(\bY^{\prime}, \bZ)), \bY^{\prime}-\bY\rangle-\varepsilon\|\bY-\bY^\prime\|_F^2
\end{align*}
This together with the consequence of the 2$R^{\operatorname{op}}_{\bX}$-Lipschitz continuity of $G(\cdot)$ on $\mathcal{X}$ implies that
\begin{align*}
&(r-\mu_{\rho})\|\bX(\bY, \bZ)-\bX(\bY^{\prime}, \bZ)\|_F^{2}\leq  2R^{\operatorname{op}}_{\bX}\|\bX(\bY^{\prime}, \bZ)-\bX(\bY, \bZ)\|_F\|\bY-\bY^{\prime}\|_F-\varepsilon\|\bY-\bY^\prime\|_F^2.
\end{align*}
Let $\zeta:=\frac{\|\bX(\bY^{\prime}, \bZ)-\bX(\bY, \bZ)\|_F}{\|\bY-\bY^{\prime}\|_F}$. 
Then we know that
\begin{align*}
\zeta^{2}
& \ \leq \frac{2R^{\operatorname{op}}_{\bX}}{r-\mu_{\rho}} \zeta-\frac{\varepsilon}{r-\mu_{\rho}}\leq \frac{1}{2}\zeta^2+\frac{4(R^{\operatorname{op}}_{\bX})^2}{2(r-\mu_{\rho})^2} -\frac{\varepsilon}{r-\mu_{\rho}}\\
& \ \leq \frac{1}{2} \zeta^{2}+\frac{4(R^{\operatorname{op}}_{\bX})^2-2\varepsilon(r-\mu_{\rho})}{2(r-\mu_{\rho})^2}  \leq \frac{1}{2} \zeta^{2}+\frac{
% (2R^{\operatorname{op}}_{\bX}+2(r-\mu_{\rho}))^2
 4(R^{\operatorname{op}}_{\bX})^2
}{2(r-\mu_{\rho})^2},
\end{align*}
where the second inequality is due to the basic inequality $ab\leq \frac{1}{2}(a^2+b^2)$ for $a,b\in\R$.
Thus,
\[
\|\bX(\bY, \bZ)-\bX(\bY^{\prime}, \bZ)\|_F \leq  \frac{2R^{\operatorname{op}}_{\bX}}{r-\mu_{\rho}}\cdot\|\bY-\bY^{\prime}\|_F,
\]
which shows that \eqref{lip-y} holds with Lipschitz constant $\sigma_{2}=\frac{2R^{\operatorname{op}}_{\bX}}{r-\mu_{\rho}}$. The proof is complete.
\end{proof}
\begin{lemma}\label{lemma-dualdiff} 
    The dual function $d(\cdot,\cdot)$ is differentiable on $\mathcal{Y}\times \R^{m\times n}$, and for each  $\bY\in\mathcal{Y}$, $\bZ\in\R^{m\times n}$
    \begin{align*}
    &\nabla_{\bY} d(\bY,\bZ)=\nabla_{\bY} F(\bX(\bY,\bZ),\bY,\bZ)=G(\bX(\bY,\bZ))-\varepsilon\bY,\\
    &\nabla_{\bZ} d(\bY,\bZ)=\nabla_{\bZ} F(\bX(\bY, \bZ),\bY, \bZ)=r(\bZ-\bX(\bY,\bZ)).
    \end{align*}
    Moreover, $\nabla d(\cdot,\cdot)$ is Lipschitz continuous, i.e.,
    \begin{align}
    &\|\nabla_{\bY} d(\bY^{\prime},\bZ)-\nabla_{\bY} d(\bY^{\prime \prime},\bZ)\|_F \leq L_{d}\|\bY^{\prime}-\bY^{\prime \prime}\|_F, \quad \text{for all}\ \bY^{\prime}, \bY^{\prime \prime} \in \mathcal{Y},\notag\\
    &\|\nabla_{\bZ} d(\bY,\bZ')-\nabla_{\bZ} d(\bY,\bZ'')\|_F \leq L_{d}'\|\bZ^{\prime}-\bZ^{\prime \prime}\|_F, \quad \text{for all}\ \bZ^{\prime}, \bZ^{\prime \prime} \in \R^{m\times n},\notag
    \end{align}
    with $L_{d}:=2\sigma_2R^{\operatorname{op}}_{\bX}$ and $L_{d}':=(\sigma_1+1)r$. 
    \end{lemma}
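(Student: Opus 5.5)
The plan is a Danskin-type argument built on the strong convexity of $F(\cdot,\bY,\bZ)$ over $\mathcal{X}$, which holds because $r>\mu_{\rho}$ and Lemma~\ref{lem:lagweakcvx} applies. The minimizer $\bX(\bY,\bZ)$ is therefore unique. $F$ is jointly continuous, and in $(\bY,\bZ)$ it is smooth: quadratic in $\bZ$, and affine plus concave quadratic in $\bY$.

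\textbf{Step 1: differentiability and gradient formulas.} Since $\mathcal{X}$ is compact and the minimizer is unique, Danskin's theorem gives that $d$ is differentiable, with gradient equal to the partial gradient of $F$ in $(\bY,\bZ)$ evaluated at $\bX(\bY,\bZ)$. Continuity of $\bX(\cdot,\cdot)$, from Lemma~\ref{lemma-sollip}, makes this gradient continuous. Computing directly:
\[
\nabla_{\bY}F=G(\bX)-\varepsilon\bY,\qquad \nabla_{\bZ}F=r(\bZ-\bX).
\]
To avoid citing Danskin, one can instead sandwich directly. Bound $d(\bY',\bZ')-d(\bY,\bZ)$ above by $F(\bX(\bY,\bZ),\bY',\bZ')-F(\bX(\bY,\bZ),\bY,\bZ)$, and below by the same difference evaluated at $\bX(\bY',\bZ')$. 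Then use Lipschitz continuity of $\bX(\cdot,\cdot)$ to show that the two first-order terms differ by $o(\|(\bY',\bZ')-(\bY,\bZ)\|)$.

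\textbf{Step 2: Lipschitz bound in $\bY$.} We have
\[
\|\nabla_{\bY}d(\bY',\bZ)-\nabla_{\bY}d(\bY'',\bZ)\|_F\le\|G(\bX(\bY',\bZ))-G(\bX(\bY'',\bZ))\|_F+\varepsilon\|\bY'-\bY''\|_F .
\]
The first term is controlled by the $2R^{\operatorname{op}}_{\bX}$-Lipschitz continuity of $G$ on $\mathcal{X}$ together with \eqref{lip-y}, giving $2R^{\operatorname{op}}_{\bX}\sigma_2\|\bY'-\bY''\|_F$.

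\textbf{Main obstacle: the $\varepsilon$ term.} The stated constant $L_d=2\sigma_2R^{\operatorname{op}}_{\bX}$ has no room for the extra $\varepsilon\|\bY'-\bY''\|_F$. I will try to absorb it through the refined inequality from the proof of \eqref{lip-y}:
\[
(r-\mu_{\rho})\|\Delta\bX\|_F^2\le\langle G(\bX(\bY,\bZ))-G(\bX(\bY',\bZ)),\bY'-\bY\rangle-\varepsilon\|\bY-\bY'\|_F^2 .
\]
This is a co-coercivity-type inequality for the map $-\nabla_{\bY}d$ in $\bY$. I would expect it to yield monotonicity with modulus $\varepsilon$, together with a Lipschitz bound on the $G$ part. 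If that fails, I would simply state the constant as $2\sigma_2R^{\operatorname{op}}_{\bX}+\varepsilon$, noting that $\varepsilon\le\alpha^{-1}$ is negligible for later use.

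\textbf{Step 3: Lipschitz bound in $\bZ$.} We have
\[
\|r(\bZ'-\bX(\bY,\bZ'))-r(\bZ''-\bX(\bY,\bZ''))\|_F\le r\|\bZ'-\bZ''\|_F+r\sigma_1\|\bZ'-\bZ''\|_F
\]
by \eqref{lip-z}, which gives $L_d'=(\sigma_1+1)r$.
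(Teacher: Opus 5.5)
\textbf{Verdict.} Your route is the argument the paper has in mind: Danskin's theorem (or your direct sandwich) for differentiability and the gradient formulas, then the triangle inequality with the Lipschitz continuity of $\bX(\cdot,\cdot)$ from Lemma~\ref{lemma-sollip}. The paper does not write this proof out; it defers to Lemma~B.3 of \cite{zhang2020single}. Steps~1 and~3 are correct as written. In Step~2, the bound $2\sigma_2R^{\operatorname{op}}_{\bX}+\varepsilon$ is what the argument actually proves, and you should keep it. The absorption you hope for cannot work, because the lemma with $L_d=2\sigma_2R^{\operatorname{op}}_{\bX}$ is false in general.

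\textbf{The inequality you want to use is not valid.} Redo the sum of the four inequalities in the proof of \eqref{lip-y}. The linear parts $-\varepsilon\bY$ and $-\varepsilon\bY'$ of the $\bY$-gradients contribute $+\varepsilon\|\bY-\bY'\|_F^2$. This exactly cancels the $-\varepsilon\|\bY-\bY'\|_F^2$ from the two strong-concavity terms, so what one actually gets is
\[
(r-\mu_{\rho})\|\bX(\bY,\bZ)-\bX(\bY',\bZ)\|_F^2\le\langle G(\bX(\bY,\bZ))-G(\bX(\bY',\bZ)),\bY'-\bY\rangle .
\]
The cancellation is forced: $-\frac{\varepsilon}{2}\|\bY\|_F^2$ does not involve $\bX$, so $\bX(\bY,\bZ)$ does not depend on $\varepsilon$. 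The bound \eqref{lip-y} itself survives, since this inequality still gives $\|\bX(\bY,\bZ)-\bX(\bY',\bZ)\|_F\le\sigma_2\|\bY-\bY'\|_F$. So your use of it in Step~2 is fine.

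Moreover, no monotonicity inequality can lower a Lipschitz constant here. Write $d(\cdot,\bZ)=h-\frac{\varepsilon}{2}\|\cdot\|_F^2$ with $h$ concave and $\nabla h=G(\bX(\cdot,\bZ))$. Then the sharp Lipschitz constant of $\nabla_{\bY}d(\cdot,\bZ)$ is that of $\nabla h$ plus $\varepsilon$.

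\textbf{A counterexample to the stated constant.} Take $\ell\equiv0$, $g=\mu\|\cdot\|_1$, $\mathcal{X}=\{|\bX_{ij}|\le c\}$ with $c\ge\sqrt2$ (as in the remark after Algorithm~\ref{alg}), and $\bZ=\bz$.
\begin{itemize}
\item For every $\bY\in\mathcal{Y}$, the gradient of the smooth part of $F(\cdot,\bY,\bz)$ vanishes at $\bz$, and $\bz\in\mu\,\partial\|\cdot\|_1(\bz)$.
\item By strong convexity, $\bX(\bY,\bz)=\bz$, and therefore $\nabla_{\bY}d(\bY,\bz)=-\bI_n-\varepsilon\bY$, which is exactly $\varepsilon$-Lipschitz.
\item Choosing $r$ so large that $2\sigma_2R^{\operatorname{op}}_{\bX}=4(R^{\operatorname{op}}_{\bX})^2/(r-\mu_{\rho})<\varepsilon$ violates the claimed $L_d$.
\end{itemize}
The same example refutes the displayed inequality with the extra $-\varepsilon\|\bY-\bY'\|_F^2$: it would read $0\le-\varepsilon\|\bY-\bY'\|_F^2$.

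\textbf{What to state instead.} Take $L_d=2\sigma_2R^{\operatorname{op}}_{\bX}+\varepsilon$. This is harmless where the lemma is used. In Proposition~\ref{prop:decrease}, the quantity $L_d-\frac{\varepsilon}{2}$ becomes $2\sigma_2R^{\operatorname{op}}_{\bX}+\frac{\varepsilon}{2}$. The condition $r\ge L_{\rho}+L_g+4R^{\operatorname{op}}_{\bX}$ gives $r-\mu_{\rho}\ge 4R^{\operatorname{op}}_{\bX}$, hence $2\sigma_2R^{\operatorname{op}}_{\bX}\le R^{\operatorname{op}}_{\bX}\le\frac{1}{20\alpha}$. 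So the needed inequality $\frac{1}{\alpha}-2\sigma_2R^{\operatorname{op}}_{\bX}-\frac{\varepsilon}{2}\ge\frac{1}{2\alpha}$ holds whenever $\alpha\varepsilon\le\frac{9}{10}$. This is automatic in the small-$\varepsilon$ regime of Theorem~\ref{thm:general}.
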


We further establish the smoothness of the function $p$.
     \begin{lemma}\label{lem-proximal-smooth}
     The function $ p(\cdot) $ is differentiable on $\R^{m \times n}$ and 
     \[
     \nabla p(\bZ) = \nabla_{\bZ}d(\bY(\bZ), \bZ) = r(\bZ - \bX(\bZ)).
     \]   
Moreover, $\nabla p(\bZ)$ is $ L_d' $-Lipschitz continuous.
    \end{lemma}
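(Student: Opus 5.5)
The plan is to apply a Danskin-type argument to $p(\bZ)=\max_{\bY\in\mathcal{Y}} d(\bY,\bZ)$ and then read off Lipschitz continuity of the gradient from Lemma~\ref{lemma-sollip}. For fixed $\bZ$, the map $\bY\mapsto d(\bY,\bZ)=\min_{\bX\in\mathcal{X}}F(\bX,\bY,\bZ)$ is $\varepsilon$-strongly concave. Indeed, $F(\bX,\cdot,\bZ)$ is affine in $\bY$ minus $\frac{\varepsilon}{2}\|\bY\|_F^2$, and an infimum of such functions keeps the strong concavity. Since $\mathcal{Y}$ is convex and compact by Assumption~\ref{ass:ycompact}, the maximizer $\bY(\bZ)$ exists and is unique. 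By Lemma~\ref{lemma-dualdiff}, $d$ is jointly differentiable with $\nabla_{\bZ}d(\bY,\bZ)=r(\bZ-\bX(\bY,\bZ))$, and this gradient is continuous in $(\bY,\bZ)$, because $\bX(\cdot,\cdot)$ is Lipschitz in both arguments by \eqref{lip-z} and \eqref{lip-y}.

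First I show that $\bZ\mapsto\bY(\bZ)$ is continuous. Take $\bZ_j\to\bZ$. Every cluster point of $\bY(\bZ_j)$ lies in the compact set $\mathcal{Y}$, and passing to the limit in $d(\bY(\bZ_j),\bZ_j)\ge d(\bY',\bZ_j)$ shows it is a maximizer of $d(\cdot,\bZ)$. Uniqueness then forces it to equal $\bY(\bZ)$.

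Next comes Danskin's theorem for a compact index set with a unique maximizer and a jointly continuous partial gradient. It gives $\nabla p(\bZ)=\nabla_{\bZ}d(\bY(\bZ),\bZ)=r(\bZ-\bX(\bY(\bZ),\bZ))=r(\bZ-\bX(\bZ))$. If I want to avoid citing Danskin, I would instead sandwich the difference quotient directly:
\[
d(\bY(\bZ),\bZ')-d(\bY(\bZ),\bZ)\le p(\bZ')-p(\bZ)\le d(\bY(\bZ'),\bZ')-d(\bY(\bZ'),\bZ).
\]
Applying the mean value theorem in $\bZ$ to both sides and using continuity of $\nabla_{\bZ}d$ together with $\bY(\bZ')\to\bY(\bZ)$, both bounds equal $\langle r(\bZ-\bX(\bZ)),\bZ'-\bZ\rangle+o(\|\bZ'-\bZ\|_F)$, which proves differentiability.

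Finally, the Lipschitz bound follows from \eqref{lip-starz}:
\[
\|\nabla p(\bZ)-\nabla p(\bZ')\|_F\le r\|\bZ-\bZ'\|_F+r\|\bX(\bZ)-\bX(\bZ')\|_F\le r(1+\sigma_1)\|\bZ-\bZ'\|_F=L_d'\|\bZ-\bZ'\|_F.
\]
The main obstacle is the differentiability step. Everything else is bookkeeping, but that step relies essentially on the uniqueness of $\bY(\bZ)$, which comes from the $\varepsilon$-regularization, and on the joint continuity of $\nabla_{\bZ}d$. Without both, $p$ could have kinks where the maximizer jumps.
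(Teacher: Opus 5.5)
Your proof is correct. Its skeleton matches the paper's: Danskin applied to $p(\bZ)=\max_{\bY\in\mathcal{Y}}d(\bY,\bZ)$, then \eqref{lip-starz} for the constant $L_d'=(1+\sigma_1)r$. Where you differ is the side from which you justify the differentiability step.

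The paper first rewrites $p(\bZ)=\min_{\bX\in\mathcal{X}}\max_{\bY\in\mathcal{Y}}F(\bX,\bY,\bZ)$. It notes that $\max_{\bY}F(\cdot,\bY,\bZ)$ inherits the uniform $(r-\mu_{\rho})$-strong convexity, so it has a unique minimizer, and it uses this \emph{primal} uniqueness to invoke Danskin. You instead get uniqueness of the \emph{dual} maximizer $\bY(\bZ)$ from the $\varepsilon$-strong concavity of $d(\cdot,\bZ)$. You then prove continuity of $\bY(\cdot)$ by compactness and verify Fr\'echet differentiability directly through your two-sided sandwich.

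The paper's route does not depend on $\varepsilon$. Even for $\varepsilon=0$, when $\bY(\bZ)$ may not be unique, every dual maximizer pairs with the same primal saddle component, so $\nabla_{\bZ}d(\cdot,\bZ)$ is constant on the maximizer set. The price is a minimax swap, which needs convexity of $\mathcal{X}$ and $\mathcal{Y}$ and compactness, and a rather terse passage from primal uniqueness to $\partial p(\bZ)$ being a singleton.

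Your route uses the dual perturbation the algorithm already carries and needs no minimax theorem. The sandwich argument is also elementary and fully checkable.

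One small simplification: you can drop the compactness argument for continuity of $\bY(\cdot)$. In the upper bound of your sandwich, the mean-value gradient is $r(\tilde{\bZ}-\bX(\bY(\bZ'),\tilde{\bZ}))$ with $\tilde{\bZ}$ on the segment $[\bZ,\bZ']$. Combining \eqref{lip-z} and \eqref{lip-starz} gives $\|\bX(\bY(\bZ'),\tilde{\bZ})-\bX(\bZ)\|_F\le\sigma_1\|\tilde{\bZ}-\bZ'\|_F+\sigma_1\|\bZ'-\bZ\|_F\le 2\sigma_1\|\bZ'-\bZ\|_F$. This even yields an $\mathcal{O}(\|\bZ'-\bZ\|_F^2)$ remainder.
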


    \begin{proof}
      By definition, we know $p(\bZ) = \min_{\bX \in \mathcal{X}} \max_{\bY \in \mathcal{Y}}F(\bX, \bY, \bZ)$. Since $ F(\cdot, \bY, \bZ) $ is strongly convex  for all $\bY$ and $\bZ$ with uniform modules, the function $\max_{\bY \in \mathcal{Y}} F(\cdot, \bY, \bZ) $ also retains strong convexity and has a unique minimizer. By Danskin's theorem, this implies that $p(\bZ)$ is differentiable.
      Furthermore, noting that $ p(\bZ) = \max_{\bY \in \mathcal{Y}}d(\bY, \bZ) $, we have that $\partial p(\bZ) = \operatorname{conv}\{\nabla_{\bZ} d(\bY(\bZ), \bZ)\} = \{r(\bZ - \bX(\bZ))\}$. Consequently, $\nabla p(\bZ) = r(\bZ - \bX(\bZ))$, and the Lipschitz continuity of $\nabla p(\bZ)$ follows from~\eqref{lip-starz}.
    \end{proof}

\section{Proof Details of Basic Descent Property}
\label{sec:basic_decrease}

In this part, we present the proof of the basic descent inequality~\eqref{eq:basicdes}.

\begin{lemma}[Primal descent]
\label{lemma-F-des} 
For any $k\ge0$, it follows that
\begin{align*}
& F(\bX^{k}, \bY^{k},\bZ^k)-F(\bX^{k+1}, \bY^{k+1},\bZ^{k+1})\\
\ge\ &
\frac{2\lambda^{-1}+r-L_{\rho} - L_g}{2}\|\bX^k-\bX^{k+1}\|_F^2 + \langle G(\bX^{k+1})-\varepsilon\bY^k, \bY^{k}-\bY^{k+1}\rangle-\frac{\varepsilon}{2}\|\bY^k-\bY^{k+1}\|_F^2\\
&+ \frac{(2-\beta)r}{2 \beta}\|\bZ^{k}-\bZ^{k+1}\|_F^{2}.
\end{align*}
\end{lemma}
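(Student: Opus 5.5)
The bound splits into three pieces, one per block of the update:
\[
F(\bX^{k},\bY^{k},\bZ^k)-F(\bX^{k+1},\bY^{k+1},\bZ^{k+1})
=\bigl[F(\bX^{k},\bY^{k},\bZ^k)-F(\bX^{k+1},\bY^{k},\bZ^k)\bigr]
+\bigl[F(\bX^{k+1},\bY^{k},\bZ^k)-F(\bX^{k+1},\bY^{k+1},\bZ^k)\bigr]
+\bigl[F(\bX^{k+1},\bY^{k+1},\bZ^k)-F(\bX^{k+1},\bY^{k+1},\bZ^{k+1})\bigr].
\]
I will bound each bracket separately, following the order of the algorithm: primal step, then dual step, then the $\bZ$ step.

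\textbf{Primal bracket.} The primal subproblem objective $\bX\mapsto \mathcal{L}_{\bX^k,\lambda}(\bX,\bY^k)+\frac r2\|\bX-\bZ^k\|_F^2+\iota_{\mathcal{X}}(\bX)$ is $(\lambda^{-1}+r-L_g)$-strongly convex. This holds because $g$ is $L_g$-weakly convex and the remaining part is a linear function plus the quadratics $\frac{1}{2\lambda}\|\cdot-\bX^k\|_F^2$ and $\frac r2\|\cdot-\bZ^k\|_F^2$. Since $\bX^{k+1}$ minimizes it, comparing with the point $\bX^k$ gives
\[
\mathcal{L}_{\bX^k,\lambda}(\bX^k,\bY^k)+\tfrac r2\|\bX^k-\bZ^k\|_F^2 \ge \mathcal{L}_{\bX^k,\lambda}(\bX^{k+1},\bY^k)+\tfrac r2\|\bX^{k+1}-\bZ^k\|_F^2+\tfrac{\lambda^{-1}+r-L_g}{2}\|\bX^k-\bX^{k+1}\|_F^2 .
\]
Two facts then convert this into a statement about $\mathcal{L}_\rho$:
\begin{itemize}
\item[(i)] $\mathcal{L}_{\bX^k,\lambda}(\bX^k,\bY^k)=\mathcal{L}_\rho(\bX^k,\bY^k)$, directly from the definition.
\item[(ii)] The upper bound in Fact~\ref{fact-2} gives $\mathcal{L}_\rho(\bX^{k+1},\bY^k)\le \mathcal{L}_{\bX^k,\lambda}(\bX^{k+1},\bY^k)+\frac{L_\rho-\lambda^{-1}}{2}\|\bX^{k+1}-\bX^k\|_F^2$.
\end{itemize}
Combining, the primal bracket is at least $\frac{2\lambda^{-1}+r-L_\rho-L_g}{2}\|\bX^k-\bX^{k+1}\|_F^2$. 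The $-\frac\varepsilon2\|\bY^k\|_F^2$ term does not change in this step, so it plays no role.

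\textbf{Dual bracket.} Here $\bX^{k+1}$ is held fixed, and $\bY\mapsto F(\bX^{k+1},\bY,\bZ^k)$ is an $\varepsilon$-strongly concave quadratic. Its gradient is $G(\bX^{k+1})-\varepsilon\bY$, and its second-order behaviour is exact:
\[
F(\bX^{k+1},\bY^{k+1},\bZ^k)=F(\bX^{k+1},\bY^{k},\bZ^k)+\langle G(\bX^{k+1})-\varepsilon\bY^k,\bY^{k+1}-\bY^k\rangle-\tfrac\varepsilon2\|\bY^{k+1}-\bY^k\|_F^2 .
\]
Rearranging gives exactly the middle terms of the claimed bound, with equality.

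\textbf{$\bZ$ bracket.} Only the term $\frac r2\|\bX^{k+1}-\bZ\|_F^2$ depends on $\bZ$. The update gives $\bZ^{k+1}-\bZ^k=\beta(\bX^{k+1}-\bZ^k)$, so $\bX^{k+1}-\bZ^k=\frac1\beta(\bZ^{k+1}-\bZ^k)$ and $\bX^{k+1}-\bZ^{k+1}=\frac{1-\beta}{\beta}(\bZ^{k+1}-\bZ^k)$. Hence the bracket equals
\[
\frac r2\cdot\frac{1-(1-\beta)^2}{\beta^2}\|\bZ^{k+1}-\bZ^k\|_F^2=\frac{(2-\beta)r}{2\beta}\|\bZ^k-\bZ^{k+1}\|_F^2 .
\]

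Adding the three bounds yields the lemma. The dual and $\bZ$ steps are exact identities, so the only delicate part is the primal bracket. There I must make sure the strong convexity modulus of the linearized subproblem is $\lambda^{-1}+r-L_g$, which requires the weak convexity of $g$ and the constraint $\iota_{\mathcal{X}}$ to be handled correctly in the three-point inequality. I also need to apply Fact~\ref{fact-2} with the correct sign; it is valid since $\bX^k,\bX^{k+1}\in\mathcal{X}$.
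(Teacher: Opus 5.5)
Your proposal is correct and is essentially the paper's proof. It uses the same three-bracket split. The primal step combines the $(\lambda^{-1}+r-L_g)$-strong convexity of the linearized subproblem with the upper bound of Fact~\ref{fact-2}, the dual step uses the quadratic identity in $\bY$, and the $\bZ$ step uses the exact identity.

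One minor slip: rearranging your $\bY$-identity gives $+\frac{\varepsilon}{2}\|\bY^k-\bY^{k+1}\|_F^2$, as in the paper's intermediate inequality. The statement instead has $-\frac{\varepsilon}{2}\|\bY^k-\bY^{k+1}\|_F^2$. So the stated bound does not hold with equality; it follows by discarding the nonnegative term $\varepsilon\|\bY^k-\bY^{k+1}\|_F^2$.
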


\begin{proof}
One can infer from the definition that $\mathcal{L}_{\bX^k,\lambda}(\cdot,\bY)$ is $(\lambda^{-1}-L_g)$-strongly convex. From the update of $\bX^{k+1}$ in~\eqref{eq:primal_update}, we have
\begin{equation*}
\begin{aligned}
F(\bX^{k}, \bY^{k},\bZ^{k}) & =\mathcal{L}_{\rho}(\bX^{k}, \bY^{k})-\frac{\varepsilon}{2}\|\bY^k\|_F^2+ \frac{r}{2}\|\bX^k-\bZ^{k}\|_F^2\notag \\
&= \mathcal{L}_{\bX^k,\lambda}(\bX^k,\bY^k) -\frac{\varepsilon}{2}\|\bY^k\|_F^2+ \frac{r}{2}\|\bX^k-\bZ^{k}\|_F^2 \\
& \ge  \mathcal{L}_{\bX^k,\lambda}(\bX^{k+1},\bY^k) -\frac{\varepsilon}{2}\|\bY^k\|_F^2+ \frac{r}{2}\|\bX^{k+1}-\bZ^{k}\|_F^2+\frac{\lambda^{-1}+r-L_g }{2}\|\bX^k-\bX^{k+1}\|_F^2.
\end{aligned}
\end{equation*}
Moreover, Fact \ref{fact-2} implies that
\begin{equation*}
\mathcal{L}_{\bX^k,\lambda}(\bX^{k+1},\bY^k)\ge \mathcal{L}_{\rho}(\bX^{k+1},\bY^k) + \frac{\lambda^{-1}-L_{\rho}}{2}\|\bX^{k+1}-\bX^k\|_F^2.
\end{equation*}
It follows that
\begin{align}\label{primaldec-key1}
F(\bX^{k}, \bY^{k},\bZ^{k}) &\ge \mathcal{L}_{\rho}(\bX^{k+1},\bY^k)-\frac{\varepsilon}{2}\|\bY^k\|_F^2+\frac{r}{2}\|\bX^{k+1}-\bZ^k\|_F^2\notag\\
&\quad+\frac{2\lambda^{-1}+r-L_{\rho} - L_g}{2}\|\bX^k-\bX^{k+1}\|_F^2\notag\\
&= F(\bX^{k+1},\bY^{k}, \bZ^{k})+\frac{2\lambda^{-1}+r-L_{\rho} - L_g}{2}\|\bX^k-\bX^{k+1}\|_F^2.
\end{align}

Next,  as $F(\bX,\cdot,\bZ)$ is $\varepsilon$-strong concave, we have
    \begin{equation}\label{primaldec-key2}
      F(\bX^{k+1}, \bY^{k},\bZ^k)-F(\bX^{k+1}, \bY^{k+1},\bZ^k)
      \geq \langle G(\bX^{k+1})-\varepsilon\bY^k, \bY^{k}-\bY^{k+1}\rangle + \frac{\varepsilon}{2}\|\bY^k-\bY^{k+1}\|_F^2.
    \end{equation}
At last, on top of the update of variable $\bZ^{k+1}$, i.e., $\bZ^{k+1}=\bZ^{k}+\beta(\bX^{k+1}-\bZ^{k})$, we can verify
\begin{equation}\label{primaldec-key3}
F(\bX^{k+1}, \bY^{k+1},\bZ^{k})-F(\bX^{k+1},\bY^{k+1}, \bZ^{k+1})  = \frac{(2-\beta)r}{2 \beta}\|\bZ^{k}-\bZ^{k+1}\|_F^{2}.
\end{equation}
Summing up \eqref{primaldec-key1}, \eqref{primaldec-key2} and \eqref{primaldec-key3}, the desired result is obtained.
\end{proof}

\begin{lemma}[Dual ascent]\label{lemma-d-aes} 
For any $k\ge0$, it follows that
 \begin{align}\label{dual-ascent}
    d(\bY^{k+1},\bZ^{k+1})-d(\bY^{k},\bZ^k)
    &\ge\langle G(\bX(\bY^{k},\bZ^k))-\varepsilon\bY^k, \bY^{k+1}-\bY^{k}\rangle
    -\frac{L_d}{2}\|\bY^k-\bY^{k+1}\|_F^2\notag\\
    &\quad+\frac{r}{2}\left\langle \bZ^{k+1}-\bZ^{k},\bZ^{k+1}+\bZ^{k}-2 \bX(\bY^{k+1}, \bZ^{k+1})\right\rangle
    \end{align}
\end{lemma}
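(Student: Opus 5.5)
We split the increment into a $\bY$-step and a $\bZ$-step,
\[
d(\bY^{k+1},\bZ^{k+1})-d(\bY^{k},\bZ^k)=\big[d(\bY^{k+1},\bZ^{k})-d(\bY^{k},\bZ^k)\big]+\big[d(\bY^{k+1},\bZ^{k+1})-d(\bY^{k+1},\bZ^k)\big],
\]
and bound each bracket from below separately.

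For the $\bY$-step we use Lemma~\ref{lemma-dualdiff}. It gives $\nabla_{\bY} d(\bY,\bZ)=G(\bX(\bY,\bZ))-\varepsilon\bY$, and this gradient is $L_d$-Lipschitz in $\bY$ on $\mathcal{Y}$. Both $\bY^k$ and $\bY^{k+1}$ lie in $\mathcal{Y}$, and $\mathcal{Y}$ is convex, so the descent lemma applied to $-d(\cdot,\bZ^k)$ gives
\[
d(\bY^{k+1},\bZ^k)-d(\bY^k,\bZ^k)\ge \langle G(\bX(\bY^k,\bZ^k))-\varepsilon\bY^k,\bY^{k+1}-\bY^k\rangle-\tfrac{L_d}{2}\|\bY^{k+1}-\bY^k\|_F^2 .
\]
These are exactly the first two terms of \eqref{dual-ascent}.

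For the $\bZ$-step we avoid the gradient in $\bZ$ and compare minimizers directly. Set $\hat\bX:=\bX(\bY^{k+1},\bZ^{k+1})$. It is feasible for the minimization defining $d(\bY^{k+1},\bZ^k)$, so
\[
d(\bY^{k+1},\bZ^k)\le F(\hat\bX,\bY^{k+1},\bZ^k),
\]
while $d(\bY^{k+1},\bZ^{k+1})=F(\hat\bX,\bY^{k+1},\bZ^{k+1})$ by definition. Subtracting, only the proximal term $\tfrac r2\|\bX-\bZ\|_F^2$ differs, and the difference of squares gives
\[
d(\bY^{k+1},\bZ^{k+1})-d(\bY^{k+1},\bZ^k)\ge \tfrac r2\big(\|\hat\bX-\bZ^{k+1}\|_F^2-\|\hat\bX-\bZ^k\|_F^2\big)=\tfrac r2\langle \bZ^{k+1}-\bZ^k,\bZ^{k+1}+\bZ^k-2\hat\bX\rangle .
\]
This is the last term of \eqref{dual-ascent}. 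Adding the two bounds completes the proof.

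The only point needing care is the $\bY$-step: $d$ is differentiable and $\nabla_{\bY}d$ is Lipschitz along the whole segment joining $\bY^k$ and $\bY^{k+1}$. This holds because the segment stays in $\mathcal{Y}$ by convexity. Since Lemma~\ref{lemma-dualdiff} already supplies differentiability and the constant $L_d$, there is no real obstacle. The $\bZ$-step is exact and needs no regularity of $d$ in $\bZ$.
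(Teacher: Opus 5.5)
Your proof is correct and follows the paper's own argument. The $\bY$-step is the descent lemma for $d(\cdot,\bZ^k)$ with the Lipschitz constant from Lemma~\ref{lemma-dualdiff}. The $\bZ$-step plugs $\bX(\bY^{k+1},\bZ^{k+1})$ into the minimization defining $d(\bY^{k+1},\bZ^k)$ and expands the difference of squares, as you do; your remark that convexity of $\mathcal{Y}$ keeps the segment where that Lipschitz bound holds is a detail the paper leaves implicit.

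One caveat, which you inherit from the paper rather than introduce: since $\nabla_{\bY}d(\bY,\bZ)=G(\bX(\bY,\bZ))-\varepsilon\bY$, its Lipschitz constant is in general $L_d+\varepsilon$, not $L_d=2\sigma_2R^{\operatorname{op}}_{\bX}$. So strictly the quadratic term should carry the coefficient $\frac{L_d+\varepsilon}{2}$, which does not change the structure of your argument.
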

 \begin{proof}
 From Lemma \ref{lemma-dualdiff} we know $\nabla_{\bY} d(\cdot,\bZ)$ is Lipschitz continuous
     with $L_{d}$. 
     Then 
     we know that
         \begin{align*}
         &d(\bY^{k+1},\bZ^k)-d(\bY^{k},\bZ^k) 
         \ge \langle G(\bX(\bY^{k},\bZ^k))-\varepsilon\bY^k, \bY^{k+1}-\bY^{k}\rangle
         -\frac{L_d}{2}\|\bY^k-\bY^{k+1}\|_F^2.
         \end{align*}
 On the other hand, one has that
         \begin{align*}
         &d(\bY^{k+1}, \bZ^{k+1})-d(\bY^{k+1}, \bZ^{k})\notag\\ 
         =\ & F(\bX(\bY^{k+1}, \bZ^{k+1}), \bY^{k+1}, \bZ^{k+1})-F(\bX(\bY^{k+1}, \bZ^{k}), \bY^{k+1},\bZ^{k}) \\
         \geq\ &  F(\bX(\bY^{k+1}, \bZ^{k+1}), \bY^{k+1},\bZ^{k+1})-F(\bX(\bY^{k+1}, \bZ^{k+1}), \bY^{k+1},\bZ^{k}) \\
         =\ & \frac{r}{2}\|\bX(\bY^{k+1}, \bZ^{k+1})-\bZ^{k+1}\|_F^{2}-\frac{r}{2}\|\bX(\bY^{k+1}, \bZ^{k+1})-\bZ^{k}\|_F^{2} \\
         =\ & \frac{r}{2} \left\langle \bZ^{k+1}-\bZ^{k},\bZ^{k+1}+\bZ^{k}-2 \bX(\bY^{k+1}, \bZ^{k+1})\right\rangle.
         \end{align*}
         Finally, combining above inequalities we know \eqref{dual-ascent} holds.
 \end{proof}
 
\begin{lemma}[Proximal descent]
\label{lemma-p-des} 
    For any $k\ge0$, it follows that
   \begin{equation}\label{proximal-descent}
    p(\bZ^k)-p(\bZ^{k+1}) \ge \frac{r}{2}\left\langle \bZ^{k+1}-\bZ^{k},2 \bX(\bY(\bZ^{k+1}), \bZ^{k})-\bZ^{k}-\bZ^{k+1}\right\rangle.
    \end{equation}
    \end{lemma}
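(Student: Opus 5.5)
We prove the proximal descent inequality \eqref{proximal-descent} by comparing $p(\bZ^k)$ and $p(\bZ^{k+1})$ through a single fixed dual variable, $\bY(\bZ^{k+1})$. Recall $p(\bZ)=\max_{\bY\in\mathcal{Y}} d(\bY,\bZ)$. Since $\bY(\bZ^{k+1})\in\mathcal{Y}$ is feasible for the maximization defining $p(\bZ^k)$, we get $p(\bZ^k)\ge d(\bY(\bZ^{k+1}),\bZ^k)$. By definition of $\bY(\cdot)$ as the maximizer, $p(\bZ^{k+1})=d(\bY(\bZ^{k+1}),\bZ^{k+1})$. Hence
\[
p(\bZ^k)-p(\bZ^{k+1})\ \ge\ d(\bY(\bZ^{k+1}),\bZ^k)-d(\bY(\bZ^{k+1}),\bZ^{k+1}).
\]
This reduces the claim to a statement about $d$ with $\bY$ frozen, so only the $\bZ$-dependence matters.

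Write $\bar{\bY}:=\bY(\bZ^{k+1})$. We use the minimization structure of $d$, mirroring the dual ascent argument of Lemma~\ref{lemma-d-aes}. Since $d(\bar{\bY},\bZ^{k+1})=\min_{\bX\in\mathcal{X}}F(\bX,\bar{\bY},\bZ^{k+1})$, we have $d(\bar{\bY},\bZ^{k+1})\le F(\bX(\bar{\bY},\bZ^k),\bar{\bY},\bZ^{k+1})$. Also $d(\bar{\bY},\bZ^k)=F(\bX(\bar{\bY},\bZ^k),\bar{\bY},\bZ^k)$ exactly. Subtracting these, the $\mathcal{L}_\rho$ and $-\frac{\varepsilon}{2}\|\bar{\bY}\|_F^2$ terms cancel because $\bX$ and $\bY$ coincide in both evaluations. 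With $\hat{\bX}:=\bX(\bar{\bY},\bZ^k)$, what remains is
\[
\frac{r}{2}\|\hat{\bX}-\bZ^k\|_F^2-\frac{r}{2}\|\hat{\bX}-\bZ^{k+1}\|_F^2 .
\]

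Next we apply the identity $\|a-b\|^2-\|a-c\|^2=\langle c-b,\,2a-b-c\rangle$ with $a=\hat{\bX}$, $b=\bZ^k$, $c=\bZ^{k+1}$. This gives exactly
\[
\frac{r}{2}\left\langle \bZ^{k+1}-\bZ^{k},\,2\bX(\bY(\bZ^{k+1}),\bZ^k)-\bZ^k-\bZ^{k+1}\right\rangle,
\]
which is the right-hand side of \eqref{proximal-descent}.

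The only point needing care is the well-definedness of the objects involved, and Assumptions~\ref{ass:ycompact} and \ref{ass:xcompact} supply it. The maximizer $\bY(\bZ^{k+1})$ exists and is unique because $d(\cdot,\bZ)$ is continuous and $\varepsilon$-strongly concave on the compact convex set $\mathcal{Y}$. The minimizer $\bX(\bar{\bY},\bZ^k)$ is unique because $F(\cdot,\bar{\bY},\bZ^k)$ is $(r-\mu_\rho)$-strongly convex on $\mathcal{X}$. No Lipschitz estimates are needed, so we expect no real obstacle.
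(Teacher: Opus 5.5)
Your proof is correct and matches the paper's argument. Like the paper, you fix $\bY(\bZ^{k+1})$ to reduce the comparison of $p$ to one of $d$, bound $d(\bY(\bZ^{k+1}),\bZ^{k+1})$ above by evaluating $F$ at $\bX(\bY(\bZ^{k+1}),\bZ^{k})$, and finish with the difference-of-squares identity on the proximal terms.
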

 \begin{proof}
 Recall the definition of $p(\bZ)$:
     \[
     p(\bZ)=\max _{\bY \in \mathcal{Y}} d(\bY, \bZ).
     \]
     Then 
     it follows from the definition of $\bY(\bZ^{k+1})=\mathop{\argmax}\limits_{\bY\in\mathcal{Y}}d(\bY,\bZ^{k+1})$ that
     \begin{align*}
      &p(\bZ^{k+1})-p(\bZ^{k})\notag\\
     \leq\ & d(\bY(\bZ^{k+1}), \bZ^{k+1})-d(\bY(\bZ^{k+1}), \bZ^{k}) \\
     \leq\ & F(\bX(\bY(\bZ^{k+1}), \bZ^{k}), \bY(\bZ^{k+1}), \bZ^{k+1})-F(\bX(\bY(\bZ^{k+1}), \bZ^{k}), \bY(\bZ^{k+1}),\bZ^{k}) \\
     =\ & \frac{r}{2}\left\langle \bZ^{k+1}-\bZ^{k},\bZ^{k+1}+\bZ^{k}-2 \bX(\bY(\bZ^{k+1}), \bZ^{k})\right\rangle,
     \end{align*}
     where the second inequality is from that   $F(\bX', \bY, \bZ)\ge\min_{\bX\in\mathcal{X}} F(\bX,\bY,\bZ) = d(\bY, \bZ)$ holds for any $\bX'\in\mathcal{X}$. The proof is complete.
     \end{proof}

The following basic descent property is derived by combining the sufficient descent, dual ascent, and proximal descent properties established above.

\begin{prop}[Basic descent property]
\label{prop:decrease}
    Let $r\ge \max\{ L_{\rho} + L_{g} + 4R^{\operatorname{op}}_{\bX},  3(L_{\rho}+L_g) \} $, $\lambda\le \frac{1}{2R^{\operatorname{op}}_{\bX}}$, $ \alpha \leq \min\left\{\frac{1}{20R^{\operatorname{op}}_{\bX}}, \frac{1}{ 8R^{\operatorname{op}}_{\bX}\zeta^2} \right\}$, $\beta  \leq \frac{1}{28}\min\left\{1,\frac{(r-\mu_{\rho})^2}{2\alpha r(R^{\operatorname{op}}_{\bX})^2}\right\}$. Then for any $k\ge0$,
\begin{equation*}
\begin{aligned}
\Phi^k-\Phi^{k+1}\ge\ & \frac{7}{16\lambda}\|\bX^{k}-\bX^{k+1}\|_F^{2}+\frac{1}{8\alpha}\|\bY^{k}-\bY_+^{k}(\bZ^k)\|_F^2+\frac{4r}{7\beta}\|\bZ^{k}-\bZ^{k+1}\|_F^{2}\\
&-28r\beta\|\bX(\bY(\bZ^{k}),\bZ^{k})-\bX(\bY_+^{k}(\bZ^k), \bZ^{k})\|_F^{2}.
\end{aligned}
\end{equation*}
\end{prop}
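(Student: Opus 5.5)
The plan is to obtain the basic descent by adding the primal descent (Lemma~\ref{lemma-F-des}), twice the dual ascent (Lemma~\ref{lemma-d-aes}) and twice the proximal descent (Lemma~\ref{lemma-p-des}). By definition,
\[
\Phi^k-\Phi^{k+1}=\bigl[F^k-F^{k+1}\bigr]+2\bigl[d(\bY^{k+1},\bZ^{k+1})-d(\bY^k,\bZ^k)\bigr]+2\bigl[p(\bZ^k)-p(\bZ^{k+1})\bigr].
\]
I will then absorb the resulting cross terms into the negative quadratic terms, using the parameter restrictions on $r,\lambda,\alpha,\beta$.

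\emph{Step 1 (the $\bY$-terms).} From the primal and dual lemmas, the $\bY$-inner products combine to
\[
\langle G(\bX^{k+1})-\varepsilon\bY^k,\bY^k-\bY^{k+1}\rangle+2\langle G(\bX(\bY^k,\bZ^k))-\varepsilon\bY^k,\bY^{k+1}-\bY^k\rangle .
\]
I rewrite this as $\langle G(\bX^{k+1})-\varepsilon\bY^k,\bY^{k+1}-\bY^k\rangle$ plus $2\langle G(\bX(\bY^k,\bZ^k))-G(\bX^{k+1}),\bY^{k+1}-\bY^k\rangle$.

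For the first piece, the projection characterization of $\bY^{k+1}$ gives
\[
\langle G(\bX^{k+1})-\varepsilon\bY^k,\bY^{k+1}-\bY^k\rangle\ge\tfrac1\alpha\|\bY^{k+1}-\bY^k\|_F^2 .
\]
For the second piece, the $2R^{\operatorname{op}}_{\bX}$-Lipschitz continuity of $G$ on $\mathcal X$ and Lemma~\ref{lemma:lip} give the bound
\[
4R^{\operatorname{op}}_{\bX}\zeta\|\bX^k-\bX^{k+1}\|_F\|\bY^{k+1}-\bY^k\|_F .
\]
Young's inequality splits this into $\frac{1}{4\alpha}\|\bY^{k+1}-\bY^k\|_F^2+16\alpha(R^{\operatorname{op}}_{\bX})^2\zeta^2\|\bX^k-\bX^{k+1}\|_F^2$. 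The condition $\alpha\le 1/(8R^{\operatorname{op}}_{\bX}\zeta^2)$ together with $\lambda\le 1/(2R^{\operatorname{op}}_{\bX})$ makes the second term at most a small multiple of $\lambda^{-1}\|\bX^k-\bX^{k+1}\|_F^2$.

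The remaining quadratic terms $-\frac{\varepsilon}{2}\|\cdot\|^2-L_d\|\cdot\|^2$ are absorbed using $\alpha\le1/(20R^{\operatorname{op}}_{\bX})$ and $L_d=2\sigma_2R^{\operatorname{op}}_{\bX}$ (small since $r-\mu_\rho\gtrsim R^{\operatorname{op}}_{\bX}$). Finally I convert $\|\bY^{k+1}-\bY^k\|^2$ into $\|\bY^k-\bY^k_+(\bZ^k)\|^2$. This uses $\|a\|^2\ge\frac12\|b\|^2-\|a-b\|^2$ together with \eqref{eq:pd-impor}, whose error is again $O(\alpha^2\zeta^2)\|\bX^k-\bX^{k+1}\|^2$. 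This yields the $\frac1{8\alpha}$ coefficient.

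\emph{Step 2 (the $\bZ$-terms).} Adding the dual and proximal $\bZ$-inner products, times two, gives
\[
2r\langle\bZ^{k+1}-\bZ^k,\bX(\bY(\bZ^{k+1}),\bZ^k)-\bX(\bY^{k+1},\bZ^{k+1})\rangle .
\]
I split the difference inside as
\[
\bigl[\bX(\bY(\bZ^{k+1}),\bZ^k)-\bX(\bY(\bZ^k),\bZ^k)\bigr]+\bigl[\bX(\bY(\bZ^k),\bZ^k)-\bX(\bY^k_+(\bZ^k),\bZ^k)\bigr]+\bigl[\bX(\bY^k_+(\bZ^k),\bZ^k)-\bX(\bY^{k+1},\bZ^{k+1})\bigr].
\]
The third bracket is controlled by Lemma~\ref{lemma-sollip} (via $\sigma_1,\sigma_2$) and \eqref{eq:pd-impor}. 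The second bracket is kept: Young's inequality with weight $\beta$ produces the term $28r\beta\|\bX(\bY(\bZ^k),\bZ^k)-\bX(\bY^k_+(\bZ^k),\bZ^k)\|^2$ against a small $\frac{r}{\beta}\|\bZ^{k+1}-\bZ^k\|^2$ piece.

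The first bracket is the delicate one. I would invoke the standard argument of \cite{zhang2020single}: strong convexity in $\bX$ and concavity in $\bY$ give
\[
\langle\bZ^{k+1}-\bZ^k,\bX(\bY(\bZ^{k+1}),\bZ^k)-\bX(\bY(\bZ^k),\bZ^k)\rangle\lesssim \frac{r}{r-\mu_\rho}\cdot\frac{(R^{\operatorname{op}}_{\bX})^2}{r-\mu_\rho}\|\bZ^{k+1}-\bZ^k\|^2 .
\]
The condition $\beta\le\frac{(r-\mu_\rho)^2}{56\alpha r(R^{\operatorname{op}}_{\bX})^2}$ is exactly what makes this absorbable into $\frac{(2-\beta)r}{2\beta}\|\bZ^{k+1}-\bZ^k\|^2$, leaving $\frac{4r}{7\beta}$.

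\emph{Step 3 (the $\bX$-coefficient).} I collect the $\|\bX^k-\bX^{k+1}\|^2$ coefficient, $\frac{2\lambda^{-1}+r-L_\rho-L_g}{2}$ minus the accumulated errors. Using $r\ge L_\rho+L_g+4R^{\operatorname{op}}_{\bX}$, this leaves at least $\frac{7}{16\lambda}$.

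The main obstacle is the first bracket in Step 2, i.e., bounding the perturbation of the dual maximizer $\bY(\cdot)$ under changes in $\bZ$ without a global LICQ. I expect to have to route it through the dual strong concavity and a careful inner-product (rather than norm) estimate, and the constants may need retuning there.
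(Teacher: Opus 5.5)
\textbf{Gap: the first bracket of your Step 2.} The bound on $\bX(\bY(\bZ^{k+1}),\bZ^k)-\bX(\bY(\bZ^k),\bZ^k)$ is left unproved, and the route you suggest does not close it under the stated hypotheses.

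Going through the perturbation of the dual maximizer via dual strong concavity gives at best $\|\bY(\bZ^{k+1})-\bY(\bZ^k)\|_F\le \frac{2R^{\operatorname{op}}_{\bX}\sigma_1}{\varepsilon}\|\bZ^{k+1}-\bZ^k\|_F$. That yields a cost of order $r\sigma_1\sigma_2R^{\operatorname{op}}_{\bX}/\varepsilon$ on $\|\bZ^{k+1}-\bZ^k\|_F^2$. The present proposition cannot absorb this, because it never couples $\beta$ to $\varepsilon$; that coupling enters only in Proposition~\ref{prop:suff-decrease}.

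The $\varepsilon$-free estimate you write down is not derived from anything. Your bookkeeping is also off. The condition $\beta\le\frac{(r-\mu_{\rho})^2}{56\alpha r(R^{\operatorname{op}}_{\bX})^2}=\frac{1}{14\alpha r\sigma_2^2}$ contains $1/\alpha$ and is vacuous for small $\alpha$, so it cannot absorb an $\alpha$-free $\|\bZ^{k+1}-\bZ^k\|_F^2$ term. Its real role is to control $28r\beta\sigma_2^2\eta^2\|\bX^k-\bX^{k+1}\|_F^2$, with $\eta=2\alpha R^{\operatorname{op}}_{\bX}\zeta$. In your split, this is the cross term $2r\sigma_2\eta\|\bZ^{k+1}-\bZ^k\|_F\|\bX^{k+1}-\bX^k\|_F$ coming from your third bracket, after Young's inequality with weight $28\beta$. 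You did not account for that term.

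\textbf{The missing idea: you never need to control $\bY(\cdot)$.} By the saddle-point structure, $\bX(\bZ)=\bX(\bY(\bZ),\bZ)$ is the proximal point, on $\mathcal X$, of the $\mu_\rho$-weakly convex function $\max_{\bY\in\mathcal Y}\{\mathcal L_\rho(\cdot,\bY)-\frac{\varepsilon}{2}\|\bY\|_F^2\}$. Hence it is $\sigma_1$-Lipschitz in $\bZ$; this is \eqref{lip-starz} in Lemma~\ref{lemma-sollip}, obtained from the primal side without LICQ or $\varepsilon$.

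Combined with \eqref{lip-z},
$\|\bX(\bY(\bZ^{k+1}),\bZ^k)-\bX(\bY(\bZ^k),\bZ^k)\|_F\le\|\bX(\bY(\bZ^{k+1}),\bZ^k)-\bX(\bY(\bZ^{k+1}),\bZ^{k+1})\|_F+\|\bX(\bZ^{k+1})-\bX(\bZ^k)\|_F\le 2\sigma_1\|\bZ^{k+1}-\bZ^k\|_F$.
This costs at most $4r\sigma_1\le 6r$ on $\|\bZ^{k+1}-\bZ^k\|_F^2$, and $\beta\le 1/28$ alone absorbs it. Your two Young steps each cost $\frac{r}{28\beta}$, so the $\bZ$-coefficient is at least
$\frac{r}{\beta}\bigl(\frac{26}{28}-\frac{9.5}{28}\bigr)\ge\frac{4r}{7\beta}$.

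With this fix your three-bracket split goes through and reproduces the stated constants; your Step 1 is essentially the paper's. The paper organizes Step 2 slightly differently. It splits only into $[\bX(\bY(\bZ^{k+1}),\bZ^k)-\bX(\bY(\bZ^{k+1}),\bZ^{k+1})]$, bounded by \eqref{lip-z}, and $[\bX(\bY(\bZ^{k+1}),\bZ^{k+1})-\bX(\bY^{k+1},\bZ^{k+1})]$. It applies Young with weight $7\beta$ to the latter, then expands its square into four pieces controlled by \eqref{lip-starz}, \eqref{lip-z}, \eqref{lip-y} and \eqref{eq:pd-impor}.
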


\begin{proof}
        From Lemmas \ref{lemma-F-des}, \ref{lemma-d-aes}, \ref{lemma-p-des} we know that
        \begin{align}\label{desascent-key1}
        & \Phi(\bX^k,\bY^k,\bZ^k)-\Phi(\bX^{k+1},\bY^{k+1},\bZ^{k+1}) \notag\\
         =\ & F(\bX^k,\bY^k,\bZ^k)-F(\bX^{k+1},\bY^{k+1},\bZ^{k+1}) +2(d(\bY^{k+1},\bZ^{k+1})-d(\bY^k,\bZ^k)) + 2(p(\bZ^k)-p(\bZ^{k+1})) \notag\\
         \ge\ & \frac{2\lambda^{-1}+r-L_{\rho}-L_g}{2}\|\bX^k-\bX^{k+1}\|_F^2 +\frac{(2-\beta)r}{2 \beta}\|\bZ^{k}-\bZ^{k+1}\|_F^{2}
         -\left(L_d-\frac{\varepsilon}{2}\right)\|\bY^k-\bY^{k+1}\|_F^2
         \, + \notag\\
        &  \underbrace{\langle\nabla_{\bY} F(\bX^{k+1}, \bY^{k},\bZ^k)-2\nabla_{\bY} F(\bX(\bY^{k}, \bZ^{k}), \bY^{k}, \bZ^{k}), \bY^{k}-\bY^{k+1}\rangle}_{\text{\ding{172}}}\, +
      \notag \\ 
      & \underbrace{2 r\left\langle \bZ^{k+1}-\bZ^{k},\bX(\bY(\bZ^{k+1}), \bZ^{k})-\bX(\bY^{k+1}, \bZ^{k+1})\right\rangle}_{\text{\ding{173}}}.
        \end{align}
Subsequently, we simplify the terms \text{\ding{172}} and  \text{\ding{173}}. First, for \text{\ding{172}} we know that 
        \begin{align*}
        \text{\ding{172}} 
         =\ & \langle\nabla_{\bY} F(\bX^{k+1}, \bY^{k},\bZ^k), \bY^{k}-\bY^{k+1}\rangle+2\langle\nabla_{\bY} F(\bX(\bY^{k}, \bZ^{k}), \bY^{k}, \bZ^{k}), \bY^{k+1}-\bY^{k}\rangle\\
        =\ & \langle\nabla_{\bY} F(\bX^{k+1}, \bY^{k},\bZ^k), \bY^{k+1}-\bY^{k}\rangle\\
        &+2\langle\nabla_{\bY} F(\bX(\bY^{k}, \bZ^{k}), \bY^{k}, \bZ^{k})-\nabla_{\bY} F(\bX^{k+1}, \bY^{k},\bZ^k), \bY^{k+1}-\bY^{k}\rangle
        \end{align*}
        For the first term, one has that
        \begin{align*}
    &\langle\nabla_{\bY}F(\bX^{k+1}, \bY^{k},\bZ^k), \bY^{k+1}-\bY^{k}\rangle 
     \ge   \frac{1}{\alpha}\|\bY^{k}-\bY^{k+1}\|_F^2,
\end{align*}
where it follows from the update of dual variables. On the other hand, for the second term we have
                \begin{align*}
        %\label{desascent-key3}
         &2\langle\nabla_{\bY} F(\bX(\bY^{k}, \bZ^{k}), \bY^{k}, \bZ^{k})-\nabla_{\bY} F(\bX^{k+1}, \bY^{k},\bZ^k), \bY^{k+1}-\bY^{k}\rangle \notag\\
         \ge\ & -2\|\nabla_{\bY} F(\bX(\bY^{k}, \bZ^{k}),\bY^{k},\bZ^{k})-\nabla_{\bY} F(\bX^{k+1}, \bY^{k},\bZ^{k})\|_F \cdot\|\bY^{k+1}-\bY^{k}\|_F \notag\\
         \ge\ & - 4R^{\operatorname{op}}_{\bX}\|\bX^{k+1}-\bX(\bY^{k}, \bZ^{k})\|_F \cdot\|\bY^{k}-\bY^{k+1}\|_F \notag\\
         \ge\ & -2R^{\operatorname{op}}_{\bX} \zeta^2\|\bY^{k}-\bY^{k+1}\|_F^{2}- 2R^{\operatorname{op}}_{\bX}\|\bX^{k+1}-\bX^{k}\|_F^{2},
        \end{align*}
        where the last inequality follows from $2|x||y| \leq \frac{1}{\zeta^2} x^2 + 
        \zeta^2 y^2
        $ and Lemma \ref{lemma:lip}. Together them, we obtain, 
\begin{equation}
\label{desascent-key3}
    \text{\ding{172}}  \ge \left(\frac{1}{\alpha}- 2R^{\operatorname{op}}_{\bX}\zeta^2\right) \|\bY^{k}-\bY^{k+1}\|_F^{2}
    -2R^{\operatorname{op}}_{\bX}\|\bX^{k+1}-\bX^{k}\|_F^{2}.
\end{equation}
Then, we continue to bound $ \text{\ding{173}}$, 
\begin{equation}
\label{desascent-key2}
\begin{aligned}
  \text{\ding{173}} 
 =\ & 2 r \left\langle \bZ^{k+1}-\bZ^{k}, \bX(\bY(\bZ^{k+1}), \bZ^{k})-\bX(\bY^{k+1}, \bZ^{k+1})\right\rangle \\
 =\ & 2 r \left\langle \bZ^{k+1}-\bZ^{k}, \bX(\bY(\bZ^{k+1}), \bZ^{k})-\bX(\bY(\bZ^{k+1}), \bZ^{k+1})\right\rangle \\
 & + 2 r\left\langle \bZ^{k+1}-\bZ^{k}, \bX(\bY(\bZ^{k+1}), \bZ^{k+1})-\bX(\bY^{k+1}, \bZ^{k+1})\right\rangle \\
\ge\ &  -2r \sigma_1\|\bZ^{k+1}-\bZ^k\|_F^2+2 r\left\langle \bZ^{k+1}-\bZ^{k}, \bX(\bY(\bZ^{k+1}), \bZ^{k+1})-\bX(\bY^{k+1}, \bZ^{k+1})\right\rangle \\
\ge\ & -2r \sigma_1\|\bZ^{k+1}-\bZ^k\|_F^2-\frac{r}{7\beta}\|\bZ^{k+1}-\bZ^k\|_F^2 - 7r\beta \|\bX(\bY(\bZ^{k+1}), \bZ^{k+1})-\bX(\bY^{k+1}, \bZ^{k+1})\|_F^2, 
\end{aligned}
\end{equation}
where the first inequality is from \eqref{lip-z} with the Cauchy-Schwarz inequality and the second inequality follows from the AM-GM inequality. 
Thus, the inequalities \eqref{desascent-key1}-\eqref{desascent-key2} above imply that
\begin{align}\label{desas-keymid}
        & \Phi(\bX^k,\bY^k,\bZ^k)-\Phi(\bX^{k+1},\bY^{k+1},\bZ^{k+1}) \notag\\
        \geq\ & \frac{2\lambda^{-1}+r-L_{\rho} - L_g -  4R^{\operatorname{op}}_{\bX}}{2}\cdot\|\bX^{k}-\bX^{k+1}\|_F^{2}+\left(\frac{1}{\alpha}- 2R^{\operatorname{op}}_{\bX}\zeta^2-L_d+\frac{\varepsilon}{2}\right)\|\bY^{k}-\bY^{k+1}\|_F^{2}\notag\\
        &
        +\left(\frac{(2-\beta)r}{2 \beta}-2r\sigma_1-\frac{r}{7\beta}\right)\|\bZ^{k}-\bZ^{k+1}\|_F^{2}-7r\beta \|\bX(\bY(\bZ^{k+1}),\bZ^{k+1})-\bX(\bY^{k+1}, \bZ^{k+1})\|_F^2. 
\end{align}
 On top of \eqref{eq:pd-impor} that
 \begin{align*}
 % \label{eq:pd-impor}
& \|\bY^{k+1}-\bY_{+}(\bZ^k)\|_F 
\leq  2\alpha R^{\operatorname{op}}_{\bX} \zeta\|\bX^k-\bX^{k+1}\|_F,
\end{align*}
 we have with $\eta:=2\alpha R^{\operatorname{op}}_{\bX} \zeta$ that
 \begin{align}
 \label{desas-final-key1}
 \|\bY^{k+1}-\bY^{k}\|_F^{2} 
 &=\|\bY^{k+1}-\bY_{+}(\bZ^{k})+\bY_{+}(\bZ^{k})-\bY^{k}\|_F^{2}\notag\\
 &\geq\frac{1}{2}\|\bY^{k}-\bY_{+}(\bZ^{k})\|_F^{2} -\|\bY^{k+1}-\bY_{+}(\bZ^{k})\|_F^{2}\notag\\
 & \geq \frac{1}{2}\|\bY^{k}-\bY_{+}(\bZ^{k})\|_F^{2} -\eta^2\|\bX^{k}-\bX^{k+1}\|_F^2.
 \end{align}
 On the other hand, by Lemma \ref{lemma-sollip} and \eqref{eq:pd-impor} we have
\begin{align}\label{desas-final-key2}
&\|\bX(\bY(\bZ^{k+1}),\bZ^{k+1})-\bX(\bY^{k+1}, \bZ^{k+1})\|_F^{2}\notag\\
\leq\ &4\|\bX(\bY(\bZ^{k+1}),\bZ^{k+1})-\bX(\bY(\bZ^{k}),\bZ^{k})\|_F^{2}+4\|\bX(\bY(\bZ^{k}),\bZ^{k})-\bX(\bY_+(\bZ^k), \bZ^{k})\|_F^{2} \notag\\
 &+4\|\bX(\bY_+(\bZ^k), \bZ^{k})-\bX(\bY^{k+1}, \bZ^{k})\|_F^{2}+4\|\bX(\bY^{k+1}, \bZ^{k})-\bX(\bY^{k+1}, \bZ^{k+1})\|_F^{2} \notag\\
\leq\ & 8\sigma_1^2\|\bZ^{k}-\bZ^{k+1}\|_F^2+ 4\|\bX(\bY(\bZ^{k}),\bZ^{k})-\bX(\bY_+(\bZ^k), \bZ^{k})\|_F^{2}+4\sigma_2^2\eta^2\|\bX^k-\bX^{k+1}\|_F^2.
\end{align}
Substituting \eqref{desas-final-key1} and \eqref{desas-final-key2} into \eqref{desas-keymid} yields
\begin{align*}
&\Phi(\bX^k,\bY^k,\bZ^k)-\Phi(\bX^{k+1},\bY^{k+1},\bZ^{k+1})\notag\\
 \geq\ &\left(\frac{2\lambda^{-1}+r-L_{\rho} - L_g - 4R^{\operatorname{op}}_{\bX}}{2}-28r\beta\sigma_2^2\eta^2\right)\|\bX^{k}-\bX^{k+1}\|_F^{2}\ + \left(\frac{1}{\alpha}-2R^{\operatorname{op}}_{\bX}\zeta^2-L_d+\frac{\varepsilon}{2}\right)\cdot\notag\\
 &\left(\frac{1}{2}\|\bY^{k}-\bY^{k}_+(\bZ^{k})\|_F^{2}-\eta^2\|\bX^{k+1}-\bX^k\|_F^2 \right)
+\left(\frac{(2-\beta)r}{2 \beta}-2r\sigma_1-\frac{r}{7\beta}-56r\beta\sigma_1^2\right)\|\bZ^{k}-\bZ^{k+1}\|_F^{2}\\ 
&- 28r\beta\|\bX(\bY(\bZ^{k}),\bZ^{k})-\bX(\bY_+^{k}(\bZ^k), \bZ^{k})\|_F^{2}.
\end{align*}

Suppose that $r\ge L_{\rho} + L_g + 4R^{\operatorname{op}}_{\bX}$, which implies $\sigma_2 \leq 3$ and  $L_{d}-\frac{\varepsilon}{2} \leq  10R^{\operatorname{op}}_{\bX}$, we observe the following:
\begin{itemize}
\item As for $\alpha$, we have $\alpha \leq \min\left\{\frac{1}{ 20R^{\operatorname{op}}_{\bX}}, \frac{1}{8R^{\operatorname{op}}_{\bX}\zeta^2} \right\}$, and then $\frac{1}{ \alpha}-L_{d}+\frac{\varepsilon}{2} \ge \frac{1}{2\alpha}$ and
$
\frac{1}{2\alpha} - 2R^{\operatorname{op}}_{\bX}\zeta^2 \ge \frac{1}{4\alpha}.
$
\item As $\beta\leq\frac{1}{28}$ and $\sigma_1 \leq \frac{3}{2}$ since $r \geq   3(L_{\rho}+L_g)$, 
we have
\begin{align*}
\frac{(2-\beta)r}{2 \beta}-2r\sigma_1-\frac{r}{7\beta}-56r\beta\sigma_1^2  & \ge \frac{6r}{7\beta} -\frac{7r}{2}- 126r\beta = \frac{r}{\beta}\left(\frac{6}{7}-\frac{7}{2}\beta- 126\beta^2\right) \ge \frac{4r}{7\beta}.
\end{align*}
\item As $\lambda^{-1} \ge  2R^{\operatorname{op}}_{\bX}$ and recalling $\eta:= 2\alpha R^{\operatorname{op}}_{\bX} \zeta$, we have
\[\alpha\leq\frac{1}{8R^{\operatorname{op}}_{\bX} \zeta^2} = \frac{\lambda^{-1}}{ 16(R^{\operatorname{op}}_{\bX})^2\zeta^2} \frac{2R^{\operatorname{op}}_{\bX}}{\lambda^{-1}} \leq \frac{\lambda^{-1}}{ 16(R^{\operatorname{op}}_{\bX})^2\zeta^2}\quad \text{and}\quad
\frac{\eta^2}{2\alpha} = \frac{4\alpha (R^{\operatorname{op}}_{\bX})^2\zeta^2}{2} \leq \frac{1 }{8\lambda}.
\]
Moreover, due to $\beta  \leq \frac{1}{14\alpha r\sigma_2^2}$  and $ r\geq L_{\rho}+L_g+4R^{\operatorname{op}}_{\bX} $,
 we can obtain 
 \[
 28 r\beta \sigma_2^2\eta^2 \leq \frac{2 \eta^2}{\alpha} \leq \frac{1}{2\lambda}\quad\text{and}\quad 
  \frac{2\lambda^{-1}+r-L_{\rho}-L_g-4R^{\operatorname{op}}_{\bX}}{2} -28 r\beta \sigma_2^2\eta^2 - \frac{\eta^2}{4\alpha} \ge \frac{7}{16\lambda}.
\]
\end{itemize}
Together all pieces, we get 
\begin{align*}
&\Phi(\bX^k,\bY^k,\bZ^k)-\Phi(\bX^{k+1},\bY^{k+1},\bZ^{k+1}) \\
\ge\ & \frac{7}{16\lambda}\|\bX^{k}-\bX^{k+1}\|_F^{2}+\frac{1}{8\alpha}\|\bY^{k}-\bY_+^{k}(\bZ^k)\|_F^2+\frac{4r}{7\beta}\|\bZ^{k}-\bZ^{k+1}\|_F^{2}\\
&-28r\beta\|\bX(\bY(\bZ^{k}),\bZ^{k})-\bX(\bY_+^{k}(\bZ^k), \bZ^{k})\|_F^{2}.
\end{align*}
The proof is complete.
\end{proof}

\section{Proof Details of Sufficient Descent Property}\label{subsec:suff}

From the basic descent property in Appendix~\ref{sec:basic_decrease}, we observe that the main challenge to derive sufficient descent lies in controlling the term
\[
\|\bX(\bY(\bZ^{k}),\bZ^{k})-\bX(\bY_+^{k}(\bZ^k), \bZ^{k})\|_F.
\]
To address this, we explicitly bound it using the quantity  $\|\bY^{k}-\bY_{+}^{k}(\bZ^{k})\|_F$, which corresponds to a one-step projected gradient update of the dual function as shown in Lemma \ref{prop:dual_eb_KL}. Here we first show the proof of Lemma \ref{prop:dual_eb_KL} and then derive the sufficient descent property.

\paragraph{Proof of Lemma \ref{prop:dual_eb_KL}}

Let $\psi:\R^{m\times n}\times \R^{m\times n}\rightarrow\R$ be the function defined by
\[
\psi(\bX,\bZ) :=  F(\bX,\bY(\bZ),\bZ).
\]
Consider arbitrary $\bX\in\mathcal{X}$, $\bY \in \mathcal{Y}$, and $\bZ \in \mathbb{R}^{m\times n}$. Note that the function $\psi(\cdot,\bZ)$ is $(r-\mu_{\rho})$-strongly convex. Since
\begin{align*}
\mathop{\argmin}_{\bX'\in\mathcal{X}}\psi(\bX',\bZ) & =\mathop{\argmin}_{\bX'\in\mathcal{X}} F(\bX',\bY(\bZ),\bZ) =  \bX(\bY(\bZ),\bZ),
\end{align*}
we see that 
\begin{equation}
\label{eq:dual1}
\psi(\bX,\bZ) - \psi(\bX(\bY(\bZ),\bZ),\bZ) \ge \frac{r-\mu_{\rho}}{2}\|\bX-\bX(\bY(\bZ),\bZ)\|_F^2.
\end{equation}
In addition, we have
\begin{align} 
\label{eq:dual2}
\psi(\bX,\bZ) - \psi(\bX(\bY(\bZ),\bZ),\bZ)\notag
\leq \  &\psi(\bX,\bZ)-F(\bX(\bY_+(\bZ),\bZ),\bY_+(\bZ),\bZ)\notag \\
\leq \ & \max_{\bY^\prime\in \mathcal{Y}} F(\bX,\bY^\prime,\bZ)-F(\bX(\bY_+(\bZ),\bZ),\bY_+(\bZ),\bZ)
\end{align}
where the first inequality follows from
\begin{align*}
&F(\bX(\bY_+(\bZ),\bZ),\bY_+(\bZ),\bZ) \\
  =\ & \min_{\bX'\in\mathcal{X}}\left\{\mathcal{L}_{\rho}(\bX',\bY_+(\bZ))-\frac{\varepsilon}{2}\|\bY_+(\bZ)\|_F^2+\frac{r}{2}\|\bX'-\bZ\|_F^2\right\} \\
\leq\ & \max_{\bY' \in \mathcal{Y}}\min_{\bX'\in\mathcal{X}}\left\{\mathcal{L}_{\rho}(\bX',\bY')-\frac{\varepsilon}{2}\|\bY^\prime\|_F^2 +\frac{r}{2}\|\bX'-\bZ\|_F^2\right\} \\
=\ & \min_{\bX'\in\mathcal{X}}\left\{\mathcal{L}_{\rho}(\bX',\bY(\bZ))-\frac{\varepsilon}{2}\|\bY(\bZ)\|_F^2 +\frac{r}{2}\|\bX'-\bZ\|_F^2\right\}\\
=\ & \min_{\bX'\in\mathcal{X}} \psi(\bX',\bZ)
= \psi(\bX(\bY(\bZ),\bZ),\bZ). % \label{eq:Fr-one}
\end{align*}
As \eqref{eq:dual1} and \eqref{eq:dual2} hold for any $\bX\in\mathcal{X}$, we obtain the following intermediate relation by taking $\bX= \bX(\bY_+(\bZ),\bZ)$
\begin{equation}\label{OS-GS-better-key3-main} 
\begin{aligned}
    & \, \frac{r-\mu_{\rho}}{2}\cdot\|\bX(\bY(\bZ),\bZ)-\bX(\bY_+(\bZ),\bZ)\|_F^2\\
     \leq & \, \max\limits_{\bY^\prime\in \mathcal{Y}} F(\bX(\bY_+(\bZ),\bZ),\bY^\prime,\bZ) - F(\bX(\bY_+(\bZ),\bZ),\bY_+(\bZ),\bZ).
\end{aligned}
\end{equation}
If $F(\bX(\bY_+(\bZ),\bZ),\bY_+(\bZ),\bZ)=\max_{\bY'\in\mathcal{Y}} F(\bX(\bY_+(\bZ),\bZ),\bY',\bZ)$, then the desired inequality follows trivially from~\eqref{OS-GS-better-key3-main}. Otherwise, we have $\bY_+(\bZ) \in \mathcal{Y}\setminus \mathcal{Y}^\star(\bX(\bY_+(\bZ),\bZ))$, where $\mathcal{Y}^{\star}(\bX):=\argmax_{\bY \in \mathcal{Y}} F(\bX,\bY,\bZ)$.

Since $\varepsilon>0$, we know that $F(\bX,\cdot,\bZ)$ is $\varepsilon$-strongly concave, which implies that
\begin{align*}
% \label{y-linear-key2}
\frac{\varepsilon}{2}\cdot\dist^2(\bY, \mathcal{Y}^{\star}(\bX))\leq \max\limits_{\bY'\in \mathcal{Y}} F(\bX,\bY',\bZ) - F(\bX,\bY,\bZ)\quad \text{for all}\ \bX\in \mathcal{X},\ \bY\in\mathcal{Y},\ \bZ\in\mathbb{R}^{m\times n}.
\end{align*}
In view of the equivalence between the quadratic growth condition and the K{\L} property with exponent $\theta=1/2$ for convex functions \citep[Theorem 5]{bolte2017error}, we know that for all  $\bX \in \mathcal{X}$,  $\bY \in \mathcal{Y} \setminus \mathcal{Y}^\star(\bX)$
\begin{equation*}
% \label{eq:weak-sharp}
\dist(\bz, -G(\bX)+\varepsilon\bY+\partial\iota_{\mathcal{Y}}(\bY))\ge \sqrt{2\varepsilon}\left(\max\limits_{\bY'\in \mathcal{Y}} F(\bX,\bY',\bZ) - F(\bX,\bY,\bZ)\right)^{\frac{1}{2}}.
\end{equation*}
Then it follows that
\begin{align*}
% \label{KL-key-01}
& \sqrt{2\varepsilon}\left(\max\limits_{\bY^\prime\in \mathcal{Y}} F(\bX(\bY_+(\bZ),\bZ),\bY^\prime,\bZ) - F(\bX(\bY_+(\bZ),\bZ),\bY_+(\bZ),\bZ)\right)^{\frac{1}{2}}\notag\\
\leq \ & \dist(\bz, -\nabla_{\bY} F(\bX(\bY_+(\bZ),\bZ),\bY_+(\bZ),\bZ)+\partial\iota_{\mathcal{Y}}(\bY_+(\bZ))) \notag\\
\leq \ & \dist(\bz, -\nabla_{\bY} F(\bX(\bY,\bZ),\bY_+(\bZ),\bZ) +\partial\iota_{\mathcal{Y}}(\bY_+(\bZ)))   \notag\\
\ & + \|\nabla_{\bY} F(\bX(\bY,\bZ),\bY_+(\bZ),\bZ)-\nabla_{\bY} F(\bX(\bY_+(\bZ),\bZ),\bY_+(\bZ),\bZ)\|_F\notag\\ 
{\leq} \ & \dist(\bz, -\nabla_{\bY} F(\bX(\bY,\bZ),\bY_+(\bZ),\bZ)+\partial\iota_{\mathcal{Y}}(\bY_+(\bZ)))+ 2\sigma_2R^{\operatorname{op}}_{\bX}\|\bY-\bY_+(\bZ)\|_F\notag\\
{\leq} \ & \left(\frac{1}{\alpha}+\varepsilon+2\sigma_2R^{\operatorname{op}}_{\bX}\right)\|\bY_+(\bZ)-\bY\|_F,
\end{align*}
where the third inequality follows from the $2R^{\operatorname{op}}_{\bX}$-Lipschitz continuity  of $G(\cdot)$ and  \eqref{lip-y};  the last inequality follows from the relative error condition of the projected gradient ascent method. This, together with \eqref{OS-GS-better-key3-main}, yields
\[
\|\bX(\bY(\bZ),\bZ)-\bX(\bY_+(\bZ),\bZ)\|_F\leq \frac{1}{\sqrt{r-\mu_{\rho}}}\left(\frac{1+  (2 \sigma_2 R^{\operatorname{op}}_{\bX}+\varepsilon)\alpha}{\sqrt{\varepsilon}\alpha}\right)
\|\bY-\bY_+(\bZ)\|_F.
\]
The proof is complete.

\paragraph{Proof of Proposition \ref{prop:suff-decrease}}
From Proposition \ref{prop:decrease} and Lemma \ref{prop:dual_eb_KL} we know that 
\begin{align*}
\Phi^k-\Phi^{k+1} 
\ge \ &  \frac{7}{16\lambda}\|\bX^{k}-\bX^{k+1}\|_F^{2}+\left(\frac{1}{8\alpha}-28r\beta\omega^2\right)\|\bY^{k}-\bY_{+}^{k}(\bZ^{k})\|_F^2
 +\frac{4r}{7\beta}\|\bZ^{k}-\bZ^{k+1}\|_F^{2}.
\end{align*}
Since 
$\beta  \leq \frac{ 1}{448  r\omega^2\alpha}$, we have
\begin{align*}
&\Phi^k-\Phi^{k+1} \ge  \frac{7}{16\lambda}\|\bX^{k}-\bX^{k+1}\|_F^{2}+\frac{1}{16\alpha}\|\bY^{k}-\bY_{+}^{k}(\bZ^{k})\|_F^2+\frac{4r}{7\beta}\|\bZ^{k}-\bZ^{k+1}\|_F^{2}.
\end{align*}
The desired result can then be obtained.

\end{document}